\newtheorem{theorem}{Theorem}[section]
\newtheorem{prop}[theorem]{Proposition}
\newtheorem{claim}{Claim}[theorem]
\newtheorem{subclaim}{Subclaim}[claim]
\newtheorem{lemma}[theorem]{Lemma}
\newtheorem{cor}[theorem]{Corollary}
\theoremstyle{definition} 
\newtheorem{definition}[theorem]{Definition}
\newtheorem{conv}[theorem]{Convention}
\theoremstyle{remark}
\newtheorem{remark}[theorem]{Remark}
\newtheorem{example}[theorem]{Example}
\providecommand{\myceil}[2]{\left\lceil #1 \right\rceil^{#2} }
\newcommand*\axiomfont[1]{\textsf{\textup{#1}}}
\newcounter{condition}
\newcommand{\one}{\mathop{1\hskip-3pt {\rm l}}}
\newcommand{\cone}[1]{{\mathbb P\mathrel{\downarrow}#1}}
\newcommand{\conea}[1]{{\mathbb A\mathrel{\downarrow}#1}}
\newcommand{\fork}[2]{{\pitchfork_{#1}}(#2)}
\newcommand{\diagonal}{\bigtriangleup}
\newcommand{\zfc}{\axiomfont{ZFC}}
\newcommand{\ch}{\axiomfont{CH}}
\renewcommand{\restriction}{\mathbin\upharpoonright}
\renewcommand{\mid}{\mathrel{|}\allowbreak}
\def\br{\blacktriangleright}
\def\sq{\sqsubseteq}
\def\s{\subseteq}
\DeclareMathOperator{\cl}{cl}
\DeclareMathOperator{\otp}{otp}
\DeclareMathOperator{\dom}{dom}
\DeclareMathOperator{\rng}{Im}
\DeclareMathOperator{\acc}{acc}
\DeclareMathOperator{\tr}{Tr}
\DeclareMathOperator{\cf}{cf}
\DeclareMathOperator{\cov}{cov}
\DeclareMathOperator{\refl}{Refl}
\DeclareMathOperator{\ord}{Ord}
\DeclareMathOperator{\add}{Add}
\DeclareMathOperator{\stem}{stem}
\newcommand{\lh}{\ell}
\title[Sigma-Prikry forcing I]{Sigma-Prikry forcing I:\\The Axioms}
\author{Alejandro Poveda}
\thanks{Poveda was partially supported by the Spanish Government under grant MTM2017-86777-P, by Generalitat de Catalunya (Catalan Government) under grant SGR 270-2017 and by MECD Grant FPU15/00026.}
\address{Departament de Matem\`atiques i Inform\`atica, Universitat de Barcelona. Gran Via de les Corts Catalanes, 585, 08007 Barcelona, Catalonia.}
\author{Assaf Rinot}
\thanks{Rinot was partially supported by the European Research Council (grant agreement ERC-2018-StG 802756) and by the Israel Science Foundation (grant agreement 2066/18).}
\address{Department of Mathematics, Bar-Ilan University, Ramat-Gan 5290002, Israel.} \urladdr{http://www.assafrinot.com}
\author{Dima Sinapova}
\thanks{Sinapova was partially supported by the National Science Foundation, Career-1454945.}
\address{Department of Mathematics, Statistics, and Computer Science\\ University of Illinois at Chicago\\ Chicago, IL 60607-7045\\ USA} \urladdr{https://homepages.math.uic.edu/~sinapova/}
\begin{document}
\begin{abstract}  We introduce a class of notions of forcing which we call $\Sigma$-Prikry,
and show that many of the known Prikry-type notions of forcing that centers around singular cardinals of countable cofinality
are $\Sigma$-Prikry. We show that given a $\Sigma$-Prikry poset $\mathbb P$ and a name for a non-reflecting stationary set $T$,
there exists a corresponding $\Sigma$-Prikry poset that projects to $\mathbb P$ and kills the stationarity of $T$.
Then, in a sequel to this paper, we develop an iteration scheme for $\Sigma$-Prikry posets.
Putting the two works together, we obtain a proof of the following.

{\bf Theorem.} If $\kappa$ is the limit of a countable increasing sequence of supercompact cardinals,
then there exists a cofinality-preserving forcing extension
in which $\kappa$ remains a strong limit,
every finite collection of stationary subsets of $\kappa^+$ reflects simultaneously, and $2^\kappa=\kappa^{++}$.
\end{abstract}
\date{\today}
\maketitle
\tableofcontents
\section{Introduction}

In \cite{cohen1,cohen2}, Cohen invented the method of forcing as a mean to prove the independence of mathematical propositions from $\zfc$ (the Zermelo-Fraenkel axioms for set theory).
With this method, one starts with an arbitrary (transitive) model $\mathbb M$ of $\zfc$, define there a partial order $\mathbb P$,
and then pass to a \emph{forcing extension} $\mathbb M[G]$ in which a new \emph{$\mathbb P$-generic} set $G$ is adjoined.
The outcome $\mathbb M[G]$ is the smallest model of $\zfc$ to contain all the elements of $\mathbb M$, as well as the object $G$.
For instance, in Cohen's celebrated work on the Continuum Hypothesis ($\ch$, asserting that $2^{\aleph_0}=\aleph_1$), he takes $\mathbb M$ to be G\"odel's  model \cite{godel} of $\zfc+\ch$,
and defines $\mathbb P$ in a way that ensures that any $\mathbb P$-generic set $G$ will consist of $\aleph_2$ many distinct real numbers.
Finally, to verify that ``$2^{\aleph_0}\geq\aleph_2$" indeed holds in $\mathbb M[G]$, Cohen proves that $\aleph_2$,
the second uncountable cardinal of $\mathbb M$, remains the second uncountable cardinal of $\mathbb M[G]$.
In fact, Cohen proves that $\mathbb P$ satisfies the \emph{countable chain condition} ($ccc$) and shows that this condition ensures that the cardinals structure of $\mathbb M[G]$ is identical to that of $\mathbb M$.

Now, let us consider a proposition $\varphi$ slightly more involved than $\ch$, say, $\varphi$ is of the form ``every uncountable group having property $p$, has property $q$, as well''.
Suppose that $\mathbb M$ is a model in which there is an uncountable group $A$ that forms a counterexample to $\varphi$.
Then we could try to cook up a poset $\mathbb P_A$ such that for any $\mathbb P_A$-generic set $G$, either $G$ witness in $\mathbb M[G]$ that $A$ has property $q$,
or $G$ witnesses in $\mathbb M[G]$ that $A$ ceased to have property $p$. This will solve our problem $\varphi$ for $A$,
but it is very likely that in our new model $\mathbb M[G]$ there are other (possibly new) counterexamples to $\varphi$, meaning that we need to fix yet another counterexample $A'$
and pass to a forcing extension $\mathbb M[G][H]$ solving the problem for $A'$, and basically ``keep going''. But will we ever catch our tail?

It is clear that to have a chance to catch our tail,
there is a need for a transfinite forcing iteration. However, unless various conditions are met, such a forcing iteration will ruin the cardinals structure,
leading to a meaningless solution of the problem $\varphi$, in the sense that all uncountable groups from the intermediate models will become countable at the final model.

The first successful transfinite iteration scheme was devised by Solovay and Tennenbaum in \cite{MR0294139},
who solved a problem concerning a particular type of linear orders of size $\aleph_1$ known as \emph{Souslin lines}.
They found a natural $ccc$ poset $\mathbb P_L$ to ``kill'' a given Souslin line $L$, proved that a (finite-support) iteration of $ccc$ posets is again $ccc$,
and proved that in an iteration of length $\aleph_2$, any Souslin line in the final model must show up in one of the intermediate models,
meaning that they can ensure that, in their final model, there are no Souslin lines.

The Solovay-Tennenbaum technique is very useful (see \cite{MR780933}), but it admits no  generalizations that allow to tackle problems concerning objects of size $>\aleph_1$.
One crucial reason for the lack of generalizations has to do with the poor behavior of the higher analogues of $ccc$ at the level of cardinals $>\aleph_1$ (see \cite{paper18,paper34,roslanowski} for a discussion and counterexamples).

Still, various iteration schemes for posets having strong forms of the $\kappa^{+}$-chain-condition for $\kappa$ regular were devised in
\cite{sh:80,sh:587,RoSh:655,MR2029324,RoSh:888,RoSh:942,RoSh:1001}.
In contrast, there is a dearth of works involving iterations at the level of the successor of singular cardinals.

A few ad-hoc treatments of iterations that are centered around a singular cardinal may be found in \cite[\S2]{sh:186}, \cite[\S10]{cfm} and \cite[\S1]{paper08},
and a  more general framework is offered by \cite[\S3]{sh:667}.
In \cite{DjSh:659}, the authors took another approach in which they first pursue a forcing iteration along a successor of a regular cardinal $\kappa$,
and at the very end they singularize $\kappa$ by appealing to Prikry forcing. This was then generalized to Radin forcing in \cite{CDMMSh:963}.

In this project, we propose yet another approach, allowing to put the Prikry-type forcing at $\kappa$ as our very first step of the iteration,
and then continue up to length $\kappa^{++}$ without collapsing cardinals.
We do so by identifying a class of Prikry-type posets that are iterable in a sense to be made precise.
The class is called \emph{$\Sigma$-Prikry},
where $\Sigma=\langle\kappa_n\mid n<\omega\rangle$ is a non-decreasing sequence of regular uncountable cardinals,
converging to our cardinal $\kappa$.
A member of the $\Sigma$-Prikry class is a triple $(\mathbb P,\lh,c)$ satisfying, among other things, the following:
\begin{itemize}
\item $\mathbb P=(P,\le)$ is a notion of forcing;
\item $\one_{\mathbb P}$ decides the value of $\kappa^+$ to be some cardinal $\mu$;
\item $\lh:P\rightarrow\omega$ is a monotone grading function;
\item $c:P\rightarrow \mu$ is a function witnessing that $\mathbb P$ is $\mu^+$-2-linked;
\item $(\mathbb P,\lh)$ has the Complete Prikry Property.
\end{itemize}

Here, \emph{$\mu^+$-2-linked} is a well-known strong form of the $\mu^+$-chain-condition;
as explained earlier, the latter would be too weak for any viable iteration scheme.
In contrast, the \emph{Complete Prikry Property}  is a new concept that we introduce here in order to simultaneously
capture two characteristic features of Prikry-type forcing: the \emph{decision by pure extension property} and the \emph{strong Prikry property}.
The exact definition of $\Sigma$-Prikry may be found in Section~\ref{SPS} and a list of examples is given in Section~\ref{examples}.

Now, let us describe the first application of our framework.
In his dissertation \cite{AS}, Sharon claimed that if $\kappa$ is the limit of a strictly increasing sequence $\langle \kappa_n\mid n<\omega\rangle$ of supercompact cardinals,
then, in some cardinals-preserving forcing extension, $\kappa$ remains a strong limit, $2^\kappa=\kappa^{++}$, and every stationary subset of $\kappa^+$ reflects.
Sharon's model is obtained by first blowing up the power of $\kappa$ using the forcing of \cite[\S3]{Git-Mag},
and then carrying out an iteration of length $\kappa^{++}$ to kill all non-reflecting stationary subsets of $\kappa^+$.
However, a close inspection of Sharon's proof reveals a gap in the verification of the $\kappa^{++}$-chain-condition of the defined iteration,
and, of course, such a chain condition is crucial for the existence of a bookkeeping function
that would ensure the killing of each and every non-reflecting stationary subset of $\kappa^+$.
In a  very recent preprint \cite{bhu}, Ben-Neria, Hayut and Unger give an alternative proof of Sharon's result;
their proof does not involve iterated forcing to kill the non-reflecting stationary sets and instead uses iterated ultrapowers to avoid the generation of non-reflecting stationary sets.

In this work, we show that Sharon's original approach is repairable and, in fact, falls into our framework.
As a first step, we show that his notion of forcing for killing a single non-reflecting stationary set fits into the $\Sigma$-Prikry class:
\begin{theorem}\label{one}
Suppose $(\mathbb P_1,\lh_1,c_1)$ is $\Sigma$-Prikry and $\dot T$
is a $\mathbb P_1$-name for a non-reflecting stationary subset of $E^\mu_\omega$.
Then there exists a corresponding triple
$(\mathbb P_2,\lh_2,c_2)$ such that:
\begin{itemize}
\item $\mathbb P_2$ is a notion of forcing that projects to $\mathbb P_1$; furthermore:
\item $(\mathbb P_2,\lh_2,c_2)$ is $\Sigma$-Prikry
admitting a \emph{forking projection} to $(\mathbb P_1,\lh_1,c_1)$;
\item $\one_{\mathbb P_2}$ forces that $\dot T$ is nonstationary.
\end{itemize}
\end{theorem}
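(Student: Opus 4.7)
The plan is to construct $\mathbb P_2$ as a two-coordinate poset sitting above $\mathbb P_1$, whose second coordinate shoots a club through $\mu \setminus \dot T$. A condition will be a pair $q = (p, f)$ where $p \in \mathbb P_1$ and $f$ is a closed bounded subset of $\mu$ with $p \Vdash_{\mathbb P_1} \check f \cap \dot T = \emptyset$; the order is $(p', f') \le_{\mathbb P_2} (p, f)$ iff $p' \le_{\mathbb P_1} p$ and $f'$ end-extends $f$ (i.e., $f' \cap (\max(f) + 1) = f$). Set $\lh_2(p, f) := \lh_1(p)$, and define $c_2(p, f)$ by pairing $c_1(p)$ with a code for $f$ via a bijection $\mu \times [\mu]^{<\mu} \to \mu$ (using $|[\mu]^{<\mu}| = \mu$). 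The forking projection $\pi: \mathbb P_2 \to \mathbb P_1$ is the first-coordinate map.

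First I would verify the easy parts of $\Sigma$-Prikryness for $(\mathbb P_2, \lh_2, c_2)$. Monotonicity of $\lh_2$ and the forcing of $\kappa^+ = \check\mu$ are inherited directly from $\mathbb P_1$ via $\pi$. For $\mu^+$-$2$-linkedness, two conditions sharing the same $c_2$-value share both their first-coordinate $c_1$-value (hence are first-coordinate compatible, by $\mu^+$-$2$-linkedness of $\mathbb P_1$) and their second coordinate $f$ (hence are trivially second-coordinate compatible). That $\pi$ is a projection is immediate from the end-extension convention: any strengthening of $p$ in $\mathbb P_1$ lifts to a $\mathbb P_2$-strengthening of $(p, f)$ by leaving $f$ fixed.

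The heart of the proof lies in the Complete Prikry Property. Given $(p, f) \in \mathbb P_2$, an integer $n \ge \lh_2(p, f)$, and a dense open subset to be met, one invokes the CPP of $\mathbb P_1$ to produce a pure extension $p^* \le_{\mathbb P_1} p$ deciding all first-coordinate information at level $n$. Since $p^* \le p$, one has $p^* \Vdash \check f \cap \dot T = \emptyset$, so $(p^*, f)$ is a pure extension of $(p, f)$ in $\mathbb P_2$. To extend the second coordinate and meet the dense set, one uses that $\dot T$ is forced to be non-reflecting: for any $\alpha < \mu$ of uncountable cofinality, $p^*$ forces $\dot T \cap \alpha$ to be non-stationary in $\alpha$, so a $\mathbb P_1$-name $\dot C$ for a club of $\alpha$ disjoint from $\dot T$ exists, and $f$ can be extended by a bounded closed initial segment of $\dot C$ above $\sup(f)$. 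Collecting all second coordinates in the generic extension yields a club of $\mu$ disjoint from $\dot T$, witnessing that $\dot T$ is nonstationary.

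The main obstacle is verifying the precise form of the forking projection axioms, and establishing the CPP for $\mathbb P_2$ in the strong form that persists under the iteration scheme developed in the sequel. Specifically, one must show that pure extensions in $\mathbb P_2$ stand in canonical bijection with pairs consisting of a pure extension in $\mathbb P_1$ together with an end-extension of the second coordinate that is forced to remain disjoint from $\dot T$. The subtle point is that deciding the second coordinate at an ordinal $\alpha < \mu$ of uncountable cofinality may require first extending $p$ sufficiently (via the CPP of $\mathbb P_1$) to decide the relevant $\mathbb P_1$-name for a witness to the non-reflection of $\dot T \cap \alpha$. The hypothesis that $\dot T \subseteq E^\mu_\omega$ and that $\dot T$ is non-reflecting is precisely what guarantees that this extension process is unobstructed at every limit stage of countable cofinality and at every ordinal of uncountable cofinality below $\mu$.
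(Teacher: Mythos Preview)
Your construction has a genuine gap: the naive club-shooting poset you describe fails two of the $\Sigma$-Prikry axioms.

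\textbf{Directed closure fails.} Take a $\le^0$-decreasing $\omega$-sequence $\langle (p_k,f_k)\mid k<\omega\rangle$ in your $(\mathbb P_2)_n$, with the $f_k$'s strictly end-extending one another. A lower bound must have second coordinate end-extending $\bigcup_k f_k$, so it must contain $\alpha:=\sup_k\max(f_k)$, which has countable cofinality. Nothing in your definition prevents $p^*\Vdash\check\alpha\in\dot T$; you only know each $p_k$ forces $f_k$ disjoint from $\dot T$, not that the running maxima are controlled. So Clause~(\ref{c2}) of Definition~\ref{SigmaPrikry} breaks.

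\textbf{The CPP fails via the mixing property.} Given $a=(p,f)$, $q\le^0 p$, and a function $g:W_n(q)\to\conea a$ with $g(r)=(r,f_r)$, the $f_r$'s may be pairwise-incompatible end-extensions of $f$ (different $n$-step extensions $r$ may force different ordinals out of $\dot T$). Your forcing has no condition $b=(q,f')$ with $\fork{}{b}(r)=(r,f')\unlhd(r,f_r)$ for all $r$, since a single $f'$ cannot end-extend all of them. Without mixing, the argument of Lemma~\ref{corollary320} does not go through, and there is no alternative route to Clause~(\ref{c6}).

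The paper's poset is built precisely to repair both points. A condition's second coordinate is not a single closed set but a \emph{labeled $p$-tree} $S:W(p)\to[\mu]^{<\mu}$ assigning a separate closed set to each element of the $p$-tree (Definition~\ref{labeled-p-tree}); this is what makes mixing possible (Lemma~\ref{mixinglemma}). Moreover each $S(q)$ is required to have its maximum in a relation $R$ tied to $\mathbb P_{\lh(r)}$-names $\dot C_{\lh(r)}$ for clubs disjoint from the level-wise pieces $\dot T_{\lh(r)}$; membership in $R$ is preserved under sups, which is exactly what rescues directed closure (Lemma~\ref{C1ASigmaPrirky}) and the nonstationarity argument (Theorem~\ref{purpose}). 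Finally, the full second coordinate is a transfinite \emph{$p$-strategy} $\langle S_i\mid i\le\alpha\rangle$ of such trees (Definition~\ref{strategy}), so that the generic club is read off as a limit of coherent approximations rather than a single growing set. Your sketch of ``using non-reflection at $\alpha$ of uncountable cofinality to pick a bounded piece of a club name'' is in the right spirit but lives at the wrong level of generality: the correct analysis, via Lemma~\ref{cor9} and the relation $R$, works one $\mathbb P_n$ at a time rather than in $\mathbb P$ itself.
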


The exact definition of \emph{forking projection} may be found in Section~\ref{sectionforking},
but, roughly speaking, this is a kind of projection that ensures a much better correspondence between the two $\Sigma$-Prikry triples,
which later allows to iterate this procedure.
In a sequel to this paper \cite{partII}, we present our iteration scheme for $\Sigma$-Prikry notions of forcing,
from which we obtain a correct proof of (a strong form of) Sharon's result:
\begin{theorem}\label{thm2} Suppose that $\langle \kappa_n\mid n<\omega\rangle$ is a strictly increasing sequence of Laver-indestructible supercompact cardinals. Denote $\kappa:=\sup_{n<\omega}\kappa_n$.
Then there exists a cofinality-preserving forcing extension
in which $\kappa$ remains a strong limit, $2^\kappa=\kappa^{++}$,
and every finite collection of stationary subsets of $\kappa^+$ reflects simultaneously.
\end{theorem}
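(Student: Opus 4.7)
The plan is to combine the iteration scheme promised in the sequel \cite{partII} with Theorem~\ref{one} of the present paper, applied to a carefully chosen zeroth step. Set $\Sigma:=\langle\kappa_n\mid n<\omega\rangle$ and $\mu:=\kappa^+$. First, I would let $(\mathbb{P}_0,\lh_0,c_0)$ be a Gitik--Magidor-style extender-based Prikry-type forcing at $\kappa$ in the spirit of \cite[\S3]{Git-Mag} that singularizes $\kappa$ to cofinality $\omega$ and forces $2^\kappa=\kappa^{++}$ while preserving $\mu$; the verification that this triple falls into the $\Sigma$-Prikry class is carried out in Section~\ref{examples}. Laver-indestructibility of each $\kappa_n$ is invoked so that enough supercompactness survives $\mathbb{P}_0$ and the subsequent iteration to guarantee stationary reflection at $\mu$ for stationary sets originating in the ground model.

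Second, invoking the iteration machinery of \cite{partII}, I would build a tower $\langle\mathbb{P}_\alpha\mid\alpha\le\kappa^{++}\rangle$ of $\Sigma$-Prikry posets linked by forking projections. At each successor stage $\alpha+1$ a bookkeeping function hands us a $\mathbb{P}_\alpha$-name $\dot T_\alpha$ for a non-reflecting stationary subset of $E^\mu_\omega$, and Theorem~\ref{one} produces $(\mathbb{P}_{\alpha+1},\lh_{\alpha+1},c_{\alpha+1})$ killing the stationarity of $\dot T_\alpha$. The $\mu^+$-$2$-linkedness clause, preserved along forking projections and at limits by the sequel, yields a $\kappa^{++}$-Knaster-type condition along the iteration; combined with cofinality-preservation (encoded in the Complete Prikry Property and the grading $\lh$), this ensures that every $\mathbb{P}_{\kappa^{++}}$-name for a subset of $\mu$ is absorbed by some proper initial segment, so the bookkeeping can indeed enumerate and dispatch every potential counterexample.

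To upgrade to simultaneous reflection of \emph{finite} collections, I would adapt the bookkeeping to enumerate finite tuples $\langle\dot T^1_\alpha,\dots,\dot T^{n_\alpha}_\alpha\rangle$ of names for stationary subsets of $\mu$ with no common reflection point, and generalize Theorem~\ref{one} to kill the simultaneous stationarity of such a tuple by shooting an appropriate club through $\mu$. Since the set of common reflection points of a finite family is definable from a $\mu$-complete filter, this generalization is structurally identical to the single-set case and preserves $\Sigma$-Prikryness with forking projection to the previous stage, so no new machinery beyond Theorem~\ref{one} is required. Stationary subsets of $\mu$ concentrating on cofinalities above $\omega$ reflect automatically from the surviving supercompactness of the $\kappa_n$'s, which covers the portion of the conclusion not handled by killing sets in $E^\mu_\omega$.

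The chief obstacle is the limit-stage analysis carried out in the sequel \cite{partII}: showing that the appropriate limit of a forking-projection-coherent system of $\Sigma$-Prikry posets is again $\Sigma$-Prikry and admits forking projections to each earlier stage, and in particular continues to satisfy $\mu^+$-$2$-linkedness, which is essential for the $\kappa^{++}$-chain condition of the entire iteration. This is precisely the point where Sharon's original argument \cite{AS} broke down, and the whole $\Sigma$-Prikry framework introduced here is tailored to make that limit step go through. Once it does, the proof of Theorem~\ref{thm2} reduces to the bookkeeping argument sketched above, with the present paper supplying the indispensable successor-step building block via Theorem~\ref{one}.
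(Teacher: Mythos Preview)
Your overall architecture matches the paper's intended plan: start with extender-based Prikry forcing (Subsection~\ref{ebpf-section}), then iterate using the forking-projection machinery of \cite{partII}, applying Theorem~\ref{one} at successor stages, with the limit-stage analysis as the crux. You also correctly flag that the $\mu^+$-$2$-linkedness along the iteration is what repairs Sharon's chain-condition gap.

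Where you diverge from the paper, and where there is a genuine gap, is your third paragraph. You propose to handle simultaneous reflection of finite families by \emph{generalizing} Theorem~\ref{one} to kill a finite tuple at once, asserting that ``this generalization is structurally identical to the single-set case.'' That assertion is not justified, and the sentence ``kill the simultaneous stationarity of such a tuple by shooting an appropriate club through $\mu$'' does not name a well-defined forcing: a finite family of stationary sets can fail to reflect simultaneously while each individually reflects, and there is no obvious single club to shoot that resolves this. The justification you give (``the set of common reflection points of a finite family is definable from a $\mu$-complete filter'') does not help, since that set may be empty.

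The paper avoids this entirely. Section~\ref{analysis} proves (Corollary~\ref{c28}) that \emph{any} $\Sigma$-Prikry forcing over Laver-indestructible supercompacts already forces $\refl({<}\omega,\Gamma)$ outright, where $\Gamma=\{\alpha<\mu\mid\omega<\cf^V(\alpha)<\kappa\}$; this is simultaneous reflection for all stationary subsets of $\Gamma$, not just single ones. The paper then observes (end of Section~\ref{analysis}) that in the relevant setup, $\refl({<}\omega,\kappa^+)$ is equivalent to $\refl({<}\omega,\Gamma)+\refl(1,(E^\mu_\omega)^V,\Gamma)$. So the iteration only ever needs to kill \emph{single} nonreflecting stationary subsets of $E^\mu_\omega$, exactly as Theorem~\ref{one} provides; no tuple generalization is needed. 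Your final sentence about $\Gamma$ gestures at this but only for single sets, and you do not invoke the reduction that makes finite simultaneous reflection automatic once $\refl(1,E^\mu_\omega,\Gamma)$ is secured.
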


\begin{remark} The preceding is optimal as, by Corollary~\ref{prop42} below,
if $\kappa$ is an uncountable strong limit cardinal of countable cofinality,
admitting a stationary set $S\s\kappa^+$ with the property that every
countable collection of stationary subsets of $S$ reflects simultaneously, then $2^\kappa=\kappa^+$.
\end{remark}

\subsection{Notation and conventions} Our forcing convention is that $p\le q$ means that $p$ extends $q$.
We write $\cone{q}$ for $\{ p\in\mathbb P\mid p\le q\}$.
Denote $E^\mu_{\theta}:=\{\alpha<\mu\mid \cf(\alpha)=\theta\}$. The sets $E^\mu_{<\theta}$ and $E^\mu_{>\theta}$ are defined in a similar fashion.
For a stationary subset $S$ of a regular uncountable cardinal $\mu$, we write $\tr(S):=\{\delta\in E^\mu_{>\omega}\mid S\cap\delta\text{ is stationary in }\delta\}$.
$H_\nu$ denotes the collection of all sets of hereditary cardinality less than $\nu$.
For every set of ordinals $x$, we denote $\cl(x):=\{ \sup(x\cap\gamma)\mid \gamma\in\ord, x\cap\gamma\neq\emptyset\}$
and $\acc^+(x):=\{ \alpha<\sup(x)\mid \sup(x\cap\alpha)=\alpha>0\}$.
For two sets of ordinals $x,y$, we write $x\sq y$ iff there exists an ordinal $\alpha$ such that $x=y\cap\alpha$.

\section{An abstract approach to Prikry-type forcing}\label{SPS}
\begin{definition} We say that $(\mathbb P,\lh)$ is a \emph{graded poset}
iff $\mathbb P=(P,\le)$ is a poset, $\lh:P\rightarrow\omega$ is a surjection, and, for all $p\in P$:
\begin{itemize}
\item  For every $q\le p$, $\lh(q)\geq\lh(p)$;
\item  There exists $q\le p$ with $\lh(q)=\lh(p)+1$.
\end{itemize}
\end{definition}
\begin{conv} For a graded poset as above, 
we denote $P_n:=\{p\in P\mid \lh(p)=n\}$,
$P_n^p:=\{ q\in P\mid  q\le p, \lh(q)=\lh(p)+n\}$,
and sometime write $q\le^n p$ (and say the $q$ is \emph{an $n$-step extension} of $p$) rather than writing $q\in P^p_n$.
\end{conv}

\begin{definition}\label{SigmaPrikry}Suppose that $\mathbb P=(P,\le)$ is a notion of forcing with a greatest element $\one$,
and that $\Sigma=\langle \kappa_n\mid n<\omega\rangle$ is a non-decreasing sequence of regular uncountable cardinals,
converging to some cardinal $\kappa$.
Suppose that $\mu$ is a cardinal such that $\one\Vdash_{\mathbb P}\check\mu=\check\kappa^+$.\footnote{More explicitly, $\one\Vdash_{\mathbb P}\check\mu=(\check\kappa)^+$.}
For functions $\lh:P\rightarrow\omega$ and $c:P\rightarrow \mu$,
we say that $(\mathbb P,\lh,c)$ is \emph{$\Sigma$-Prikry} iff all of the following hold:
\begin{enumerate}
\item\label{c4} $(\mathbb P,\lh)$ is a graded poset;
\item\label{c2} For all $n<\omega$, $\mathbb P_n:=(P_n\cup\{\one\},\le)$ is $\kappa_n$-directed-closed;\footnote{That is, for every $D\in[P_n\cup\{\one\}]^{<\kappa_n}$ with the property that for all $p,p'\in D$,
there is $q\in D$ with $q\le p,p'$, there exists $r\in P_n$ such that $r\le p$ for all $p\in D$.}
\item\label{c1} For all $p,q\in P$, if $c(p)=c(q)$, then $P_0^p\cap P_0^q$ is non-empty;
\item\label{c5} For all $p\in P$, $n,m<\omega$ and $q\le^{n+m}p$, the set $\{r\le ^n p\mid  q\le^m r\}$ contains a greatest element which we denote by $m(p,q)$.\footnote{By convention, a greatest element, if exists, is unique.}
In the special case $m=0$, we shall write $w(p,q)$ rather than $0(p,q)$;\footnote{Note that $w(p,q)$ is the weakest extension of $p$ above $q$.}
\item\label{csize} For all $p\in P$,
the set $W(p):=\{w(p,q)\mid q\le p\}$ has size ${<}\mu$;
\item\label{itsaprojection} For all $p'\le p$ in $P$, $q\mapsto w(p,q)$ forms an order-preserving map from $W(p')$ to $W(p)$;
\item\label{c6}  Suppose that $U\s P$ is a $0$-open set, i.e., $r\in U$ iff $P^r_0\s U$.
Then, for all $p\in P$ and $n<\omega$, there is $q\le^0 p$, such that, either $P^{q}_n\cap U=\emptyset$ or $P^{q}_n\s U$.
\end{enumerate}
\end{definition}

Let us elaborate on the above definition.
\begin{itemize}
\item Here, $q$ is a ``direct extension'' of $p$ in the usual Prikry sense iff $q\le^0 p$.
Note that $q\le^0 w(p,q)\le p$. Also, it is clear that if $p\le^n q$ and $q\le^ m r$, then $p\le^{n+m} r$.
\item 
The sets $P_n^p$ consist of exactly the $n$-step extensions of $p$, and ${P}_n$ is the set of all conditions of ``length'' $n$,
i.e., the  $n$-step extensions of $\one$.
Note that, typically,  $\mathbb P_n$ is not a complete suborder of $\mathbb P$,
and that, for all $p,q\in P_n$, $p\le q$ iff $p\le^0 q$.
Thereby, $\mathbb P_n$  is not necessarily separative.

\textbf{Convention.} Whenever we talk about forcing with one of the $\mathbb P_n$'s, we actually mean that we force with its separative quotient.

\item Clause~(\ref{c1}) is a very strong form of a chain condition, stronger than that of being $\mu^+$-Knaster,
and even stronger than the notion of being $\mu^+$-2-linked. Indeed, a poset $(P,\le)$ is \emph{$\mu^+$-2-linked} iff there exists a function $c:P\rightarrow\mu$
with the property that $c(p)=c(q)$ entails that $p$ and $q$ are compatible,
whereas, here, we moreover require that such a compatibility will be witnessed by a $0$-step extension of $p$ and $q$.

\textbf{Convention.} To avoid encodings, we shall often times define the function $c$ as a map from $P$ to some natural set $\mathfrak M$ of size $\leq\mu$,
instead of a map to the cardinal $\mu$ itself.
In the special case that $\mu^{<\mu}=\mu$, we may as well take $\mathfrak M$ to be $H_{\mu}$.

\item For every $p\in P$, the set $W(p)$ is called \emph{the $p$-tree}.
For every $n<\omega$, write $W_n(p):=\{ w(p,q)\mid q\in P^p_n\}$,
and $W_{\geq n}(p):=\bigcup_{m=n}^\infty W_m(p)$.
By Lemma~\ref{lemma7} below,  $(W(p),\ge)$ is a tree of height $\omega$ whose $n^{th}$ level
is a maximal antichain in $\cone{p}$ for every $n<\omega$.

\item Clause~(\ref{c6}) is what we call the \emph{Complete Prikry Property} (CPP), an analogue of the notion of a \emph{completely Ramsey} subset of $[\omega]^\omega$.
We shall soon show (Corollary~\ref{l6} below) that it is a simultaneous generalization of the usual Prikry Property (PP) and the Strong Prikry Property (SPP).
\end{itemize}

\begin{definition} Let $d:P\rightarrow\theta$ be some coloring, with $\theta$ a nonzero cardinal.
\begin{enumerate}
\item $d$ is said to be \emph{$0$-open} iff
$d(p)\in\{0,d(q)\}$ for every pair $q\le^0 p$ of elements of $P$;
\item We say that $H\s P$ is \emph{a set of indiscernibles for $d$} iff, for all $p,q\in H$,
 $(\lh(p)=\lh(q))\implies (d(p)=d(q))$.
\end{enumerate}
\end{definition}
\begin{remark}\label{characteristic} The characteristic function $d:P\rightarrow2$ of a subset $D\s P$ is $0$-open iff $D$ is a $0$-open.
\end{remark}
\begin{lemma}\label{RamseyPrikry}
For every $p\in P$, every cardinal $\theta$ with $\log(\theta)<\kappa_{\lh(p)}$ and
every $0$-open coloring $d:P\rightarrow\theta$,\footnote{Here, $\log(\theta)$ stands for the least cardinal $\nu$ to satisfy $2^\nu\geq\theta$.}
there exists $q\le^0 p$ such that $\cone{q}$ is a set of indiscernibles for $d$.
\end{lemma}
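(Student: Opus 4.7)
The plan is to first handle the case $\theta = 2$ and then bootstrap to arbitrary $\theta$ via a binary encoding.

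For the binary case, suppose $d : P \to 2$ is $0$-open and set $D := d^{-1}(\{1\})$. First I would observe that $D$ is a $0$-open subset of $P$: if $r \in D$ and $s \leq^0 r$, then $1 = d(r) \in \{0, d(s)\}$ forces $d(s) = 1$, so $P^r_0 \s D$ (the reverse inclusion is automatic, since $r \in P^r_0$). Next, using the CPP (Clause~(\ref{c6})), I would recursively build a decreasing chain $p = q_0 \geq^0 q_1 \geq^0 q_2 \geq^0 \cdots$ with each $P^{q_{n+1}}_n$ either disjoint from $D$ or contained in $D$. This chain lives in $\mathbb P_{\lh(p)}$, which is $\kappa_{\lh(p)}$-directed-closed (Clause~(\ref{c2})), and since $\kappa_{\lh(p)}$ is uncountable the chain admits a lower bound $q \leq^0 p$. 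Because $P^q_n \s P^{q_{n+1}}_n$, the coloring $d$ is constant on every $P^q_n$, so $\cone{q} = \bigcup_{n<\omega} P^q_n$ is a set of indiscernibles for $d$.

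For the general case, set $\nu := \log(\theta) < \kappa_{\lh(p)}$ and fix an injection $\iota : \theta \hookrightarrow \{0,1\}^\nu$ sending $0$ to the zero function (possible because $2^\nu \geq \theta$). For each $\alpha < \nu$ I would define $d_\alpha : P \to 2$ by $d_\alpha(r) := \iota(d(r))(\alpha)$; each such $d_\alpha$ is itself $0$-open, since for $s \leq^0 r$, either $d(r) = 0$ (whence $d_\alpha(r) = 0$ by the choice of $\iota$), or $d(r) = d(s)$ (whence $d_\alpha(r) = d_\alpha(s)$). I would then recursively construct a decreasing chain $\langle q^{(\alpha)} \mid \alpha \leq \nu \rangle$ in $\mathbb P_{\lh(p)}$ by setting $q^{(0)} := p$, invoking the binary case at successor stages to produce $q^{(\alpha+1)} \leq^0 q^{(\alpha)}$ with $\cone{q^{(\alpha+1)}}$ indiscernible for $d_\alpha$, and using $\kappa_{\lh(p)}$-directed-closedness at limits $\lambda \leq \nu$ (which is legal since $\lambda < \kappa_{\lh(p)}$) to extract a lower bound. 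The final term $q := q^{(\nu)}$ satisfies $q \leq^0 p$, and $\cone{q}$ is indiscernible for every $d_\alpha$; injectivity of $\iota$ then upgrades this to indiscernibility for $d$ itself.

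The main subtlety, such as it is, lies in arranging the encoding so that $0$-openness descends from $d$ to each binary subcoloring $d_\alpha$ --- this is precisely what forces the requirement $\iota(0) = \mathbf{0}$. Beyond that, the argument is a routine double recursion combining the CPP with the directed-closure of the graded levels, with the bookkeeping ensured by $\omega, \nu < \kappa_{\lh(p)}$.
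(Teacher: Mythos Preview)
Your proposal is correct and follows essentially the same approach as the paper: encode the $\theta$-valued coloring via binary sub-colorings (sending $0$ to the zero function so that $0$-openness descends), then combine the CPP with the directed-closure of $\mathbb P_{\lh(p)}$. The only cosmetic difference is that you organize the argument as a nested recursion (outer over the $\nu$ binary digits, inner over the $\omega$ levels), whereas the paper flattens this into a single recursion of length $\chi$ by choosing $\chi$ infinite and fixing a pairing bijection $e:\chi\leftrightarrow\chi\times\omega$.
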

\begin{proof} Let $p\in P$ and $d:P\rightarrow\theta$ as above.
Fix an infinite cardinal $\chi<\kappa_{\lh(p)}$ such that $2^{\chi}\geq\theta$.
Fix an injective sequence $\vec f=\langle f_\alpha\mid \alpha<\theta\rangle$ consisting of functions from $\chi$ to $2$
such that, in addition, $f_0$ is the constant function from $\chi$ to $\{0\}$.
\begin{claim} Let $i<\chi$. The set $U_i:=\{r\in P\mid f_{d(r)}(i)\neq 0\}$ is $0$-open.
\end{claim}
\begin{proof} Let $r\in U_i$ and $r'\le^0 r$.
As $r\in U_i$, $f_{d(r)}$ is not the constant function from $\chi$ to $\{0\}$,
so that $d(r)\neq 0$. Since $d$ is a $0$-open coloring, it follows that $d(r')=d(r)$.
Consequently, $r'\in U_i$, as well.
\end{proof}

Fix a bijection $e:\chi\leftrightarrow\chi\times\omega$.
We construct a $\le^0$-decreasing sequence of conditions $\langle p_\beta\mid \beta\leq\chi\rangle$ by recursion, as follows.

$\br$ Let $p_0:=p$.

$\br$ Suppose that $\beta<\chi$ and that $\langle p_\gamma\mid \gamma\leq\beta\rangle$ has already been defined.
Denote $(i,n):=e(\beta)$. Now, appeal to Definition~\ref{SigmaPrikry}(\ref{c6}) with $U_i$, $p_\beta$ and $n$
to obtain $p_{\beta+1}\le^0 p_\beta$ such that, either $P_n^{p_{\beta+1}}\cap U_i=\emptyset$ or $P_n^{p_{\beta+1}}\s U_i$.

$\br$ For every limit nonzero $\beta\leq\chi$ such that $\langle p_\gamma\mid \gamma<\beta\rangle$ has already been defined,
appeal to Definition~\ref{SigmaPrikry}(\ref{c2}) to find a lower bound $p_\beta$ for the sequence.

At the end of the above recursion, let us put $q:=p_{\chi}$, so that $q\le^0 p$.
We claim that $\cone{q}$ is a set of indiscernibles for $d$.

Suppose not, and pick two extensions $r,r'$ of $q$ such that $\lh(r)=\lh(r')$ but $d(r)\neq d(r')$.
As $d(r)\neq d(r')$ and $\vec f$ is injective, let us fix $i<\chi$ such that $f_{d(r)}(i)\neq f_{d(r')}(i)$.
Consequently, $|\{r,r'\}\cap U_i|=1$.
Now, put $n:=\lh(r)-\lh(p)$, so that $r,r'\in P^q_n$. Set $\beta:=e^{-1}(i,n)$.
By the choice of $p_{\beta+1}$, then, either $P_n^{p_{\beta+1}}\cap U_i=\emptyset$ or $P_n^{p_{\beta+1}}\s U_i$.
As $q\le^0 p_{\beta+1}$, we have $\{r,r'\}\s P_n^{p_{\beta+1}}$,
contradicting the fact that $|\{r,r'\}\cap U_i|=1$.
\end{proof}

It follows that the Complete Prikry Property (CPP) implies the Prikry property (PP) as well as the Strong Prikry property (SPP).

\begin{cor}\label{l6} Let $p\in P$.
\begin{enumerate}
\item Suppose $\varphi$ is a sentence in the forcing language.
Then there is $q\le^0 p$ that decides $\varphi$;
\item  Suppose $D\s P$ is a $0$-open set which is dense below $p$.
Then there are $q\le^0 p$ and $n<\omega$ such that $P^q_n\s D$.\footnote{Note that if $D$ is open, then, moreover, $P^q_m\s D$ for all $m\geq n$.}
\end{enumerate}
\end{cor}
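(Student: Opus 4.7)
The plan is to derive both items as direct corollaries of Lemma~\ref{RamseyPrikry}, after manufacturing an appropriate $0$-open coloring with few colors.

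For item (1), I would define $d:P\rightarrow 3$ by setting $d(r):=1$ if $r\Vdash_{\mathbb P}\varphi$, $d(r):=2$ if $r\Vdash_{\mathbb P}\neg\varphi$, and $d(r):=0$ otherwise. The $0$-openness of $d$ is immediate: if $d(p)\in\{1,2\}$ then $p$ already decides $\varphi$, and any $q\le^0 p$ inherits that decision, so $d(q)=d(p)$; if $d(p)=0$ the required condition $d(p)\in\{0,d(q)\}$ is trivial. Since $\log(3)=2<\kappa_{\lh(p)}$, Lemma~\ref{RamseyPrikry} supplies $q\le^0 p$ on which $\cone{q}$ is a set of indiscernibles for $d$. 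If $q$ failed to decide $\varphi$, there would be $r_1,r_2\le q$ with $d(r_1)=1$ and $d(r_2)=2$. Iterating the second bullet of the definition of a graded poset, I would extend each $r_i$ to some $r_i^*\le r_i$ with $\lh(r_1^*)=\lh(r_2^*)$; then $r_1^*,r_2^*\in\cone{q}$ share a length yet satisfy $d(r_1^*)=1\neq 2=d(r_2^*)$, contradicting indiscernibility. Hence $q$ decides $\varphi$.

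For item (2), the characteristic function $d:P\rightarrow 2$ of $D$ is $0$-open by Remark~\ref{characteristic}, and a second application of Lemma~\ref{RamseyPrikry} produces $q\le^0 p$ for which $\cone{q}$ is indiscernible for $d$. By density of $D$ below $p$, and hence below $q$, choose $r\le q$ with $r\in D$, and set $n:=\lh(r)-\lh(q)$, so that $r\in P^q_n$. Any other $r'\in P^q_n$ has $\lh(r')=\lh(r)$ and lies in $\cone{q}$, so indiscernibility forces $d(r')=d(r)=1$, i.e.\ $r'\in D$; thus $P^q_n\s D$, as required.

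The only substantive point to verify is that, in item (1), one may equalize the lengths of two incompatible decisions of $\varphi$ without disturbing what they force. This is immediate from the grading clause, which delivers a one-step extension of any condition and hence, by induction, $n$-step extensions for every $n<\omega$. Beyond this observation, the argument is pure bookkeeping around Lemma~\ref{RamseyPrikry}, and the value bound $\log(\theta)<\kappa_{\lh(p)}$ is automatic in both applications because each $\kappa_n$ is regular and uncountable.
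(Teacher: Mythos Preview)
Your proof is correct and follows essentially the same approach as the paper: for (1) you use the three-valued $0$-open coloring determined by the forcing decision on $\varphi$, apply Lemma~\ref{RamseyPrikry}, and derive a contradiction by equalizing lengths via the graded-poset clause; for (2) you use the characteristic function of $D$ and read off the desired $n$ from any $r\in D$ below $q$. The only cosmetic difference is that you set $n:=\lh(r)-\lh(q)$ whereas the paper writes $\lh(r)-\lh(p)$, which coincide since $q\le^0 p$.
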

\begin{proof} (1)
Define a $0$-open coloring $d:P\rightarrow3$, by letting, for all $r\in P$,
$$d(r):=\begin{cases}
2,&\text{if }r\Vdash\neg\varphi;\\
1,&\text{if }r\Vdash\varphi;\\
0,&\text{otherwise}.
\end{cases}$$

Appeal to Lemma~\ref{RamseyPrikry} with $d$ to get a corresponding $q\le^0p$.
Towards a contradiction, suppose that $q$ does not decide $\varphi$.
In other words, there exist $q_1\le q$ and $q_2\le q$ such that $d(q_1)=1$ and $d(q_2)=2$.
By possibly iterating Clause~(\ref{c4}) of Definition~\ref{SigmaPrikry} finitely many times,
we may find $r_1\le q_1$ and $r_2\le q_2$ such that $\lh(r_1)=\lh(r_2)$.
By definition of $d$, we have $d(r_1)=1$ and $d(r_2)=2$.
Finally, as $r_1$ and $r_2$ are two extensions $q$ of the same ``length'', $1=d(r_1)=d(r_2)=2$. This is a contradiction.

(2) Define a coloring $d:P\rightarrow2$ via $d(r):=1$ iff $r\in D$.
By Remark~\ref{characteristic}, we may
appeal to Lemma~\ref{RamseyPrikry} with $d$ to get a corresponding $q\le^0 p$.
As $D$ is dense, let us fix $r\in D$ extending $q$. Let $n:=\lh(r)-\lh(p)$,
so that $d\restriction P^q_n$ is constant with value $d(r)$.
Recalling that $r\in D$ and the definition of $d$, we infer that $P^q_n\s D$.
\end{proof}

\begin{lemma}[The $p$-tree]\label{lemma7} Let $p\in P$.
\begin{enumerate}
\item For every $n<\omega$, $W_n(p)$ is a maximal antichain in $\cone{p}$;
\item Every two compatible elements of $W(p)$ are comparable;
\item For any pair $q'\le q$ in $W(p)$,  $q'\in W(q)$;
\item $c\restriction W(p)$ is injective.
\end{enumerate}
\end{lemma}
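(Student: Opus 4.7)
The plan is to first extract a ``fixed-point'' identity: for every $r\in W(p)$, $r=w(p,r)$. Indeed, writing $r=w(p,q)$ for some $q\le p$ with $\lh(q)=\lh(r)$, one has $q\le^0 r$, so Definition~\ref{SigmaPrikry}(\ref{c5}) gives $r\le w(p,r)$ (as $r$ sits in the set whose maximum defines $w(p,r)$) and also $w(p,r)\le r$ (as $q\le^0 r\le^0 w(p,r)$ places $w(p,r)$ in the set whose maximum defines $r$). With this in hand, clause (3) is immediate: writing $q'=w(p,s)$ for some $s\le p$ with $\lh(s)=\lh(q')$, we have $s\le^0 q'\le q$, hence $s\le q$; $q'$ sits in $\{r\le q\mid s\le^0 r\}$ (whose maximum defines $w(q,s)$), and every member of this set also belongs to $\{r\le p\mid s\le^0 r\}$ (whose maximum is $q'$), yielding $q'=w(q,s)\in W(q)$.

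The crux is clause (2). Given $r_1,r_2\in W(p)$ with common lower bound $s$, assume without loss of generality that $\lh(r_1)\le \lh(r_2)$, and set $n_i:=\lh(r_i)-\lh(p)$ and $n_s:=\lh(s)-\lh(p)$. The plan is to invoke Definition~\ref{SigmaPrikry}(\ref{c5}) twice. First, let $v_1:=(n_s-n_1)(p,s)$, the greatest element of $\{r\le^{n_1}p\mid s\le^{n_s-n_1}r\}$. Since $r_1$ itself sits in this set (as $s\le r_1$ with the expected length gap), $r_1\le v_1$; conversely, $v_1$ sits in $\{r\le^{n_1}p\mid r_1\le^0 r\}$, whose maximum is $w(p,r_1)=r_1$ by the fixed-point identity, so $v_1\le r_1$, giving $r_1=v_1$. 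Second, let $v:=(n_2-n_1)(p,r_2)$, so $r_2\le^{n_2-n_1}v$. Chaining $s\le r_2\le v$, with $\lh(s)-\lh(v)=n_s-n_1$, places $v$ in the set maximized by $v_1=r_1$, whence $v\le r_1$. Combined with $r_2\le v$, this yields $r_2\le r_1$.

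Clauses (1) and (4) then fall out quickly. For the antichain part of (1): compatible equal-length members of $W(p)$ are comparable by (2), and the fixed-point identity forces comparable equal-length members to coincide. For maximality of $W_n(p)$ in $\cone{p}$: given $q'\le p$, iterate the graded-poset axiom to extend $q'$ (if needed) to $q''\le q'$ with $\lh(q'')\ge \lh(p)+n$, and let $r:=(\lh(q'')-\lh(p)-n)(p,q'')$. A mild extension of the fixed-point argument shows $r=w(p,r)$, so $r\in W_n(p)$; the relation $q''\le r$ then witnesses compatibility of $r$ with $q'$. For clause (4), if $c(r_1)=c(r_2)$, then Definition~\ref{SigmaPrikry}(\ref{c1}) delivers a common $0$-step extension of $r_1$ and $r_2$, which at once forces $\lh(r_1)=\lh(r_2)$ and their compatibility, so (2) closes the case.

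The main obstacle is clause (2): one has to track the four lengths $\lh(p),\lh(r_1),\lh(r_2),\lh(s)$ simultaneously to make the double invocation of $m(p,q)$ work, and the conceptual key is noticing that $r_1$ admits the intrinsic description $(n_s-n_1)(p,s)$, which turns it into a ``target'' that the competitor $v=(n_2-n_1)(p,r_2)$ manifestly meets, so that the maximality of $v_1=r_1$ transfers downward and pulls $r_2$ below $r_1$.
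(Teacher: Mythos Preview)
Your proof is correct. The underlying computations are the same as the paper's, but the logical organization differs in one notable way: the paper proves (1) first and then leverages the antichain property of $W_{n_0}(p)$ to deduce (2) (it forms $r^*:=(n_1-n_0)(p,q_1)$, notes $r^*\in W_{n_0}(p)$ is compatible with $q_0$, and invokes (1) to conclude $r^*=q_0$), whereas you prove (2) directly by passing through the common lower bound $s$ to identify $r_1$ intrinsically as $(n_s-n_1)(p,s)$ and then place $v=(n_2-n_1)(p,r_2)$ below it. Your order $(2)\Rightarrow(1)$ is arguably cleaner: the general comparability statement is proved once, and the equal-length antichain case drops out. The fixed-point identity $r=w(p,r)$ you isolate up front is exactly what the paper uses implicitly (it writes ``$w(p,r^*)=r^*$'' in its maximality argument), and your proof of (3) via $q'=w(q,s)$ is essentially identical to the paper's verification that $w(p,r')=w(q,r')$.
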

\begin{proof} 
(1) Clearly, $W_0(p)=\{p\}$ is a maximal antichain below $p$. Thus, hereafter, assume that $n>0$.

$\br$ To see that $W_n(p)=\{w(p,q)\mid q\in P^p_n\}$ is an antichain, suppose that $q_1,q_2\in P^p_n$ are such that $w(p,q_1)$ and $w(p,q_2)$ are compatible, as witnessed by some $q$.
By Definition~\ref{SigmaPrikry}\eqref{c4}, $q\in P^p_{n+m}$ for some $m<\omega$.
By Definition~\ref{SigmaPrikry}\eqref{c5}, then, $\{r\in P^p_{n}\mid  q\le r\}$ contains a greatest element, say, $r^*$.
Let $i<2$ be arbitrary. As $q\le w(p,q_i)$, it is not hard to see that $w(p,q_i)$ is the greatest element in $\{r\in P^p_{n}\mid  q\le r\}$, so that $w(p,q_i)=r^*$.
Altogether, $w(p,q_1)=r^*=w(p,q_2)$.

$\br$ To verify maximality of the antichain $W_n(p)$, let $p'\le p$ be arbitrary.
By Definition~\ref{SigmaPrikry}\eqref{c4}, let us pick some $q\in P^{p'}_n$, so that $q\in P^{p}_{n+m}$ for some $m<\omega$.
Then, by Definition~\ref{SigmaPrikry}\eqref{c5}, $\{r\in P^p_{n}\mid  q\le r\}$ contains a greatest element, say, $r^*$.
As $w(p,r^*)=r^*$, we have $r^*\in W_n(p)$. In addition, $r^*$ and $p'$ are compatible, as witnessed by $q$.

(2) Suppose that $q_0, q_1\in W(p)$ are two compatible elements.
Fix integers $n_0,n_1$ such that $q_0\in W_{n_0}(p)$ and $q_1\in W_{n_1}(p)$. 

If $n_0=n_1$, then by Clause~(1), $q_0=q_1$. Thus, without loss of generality, assume that $n_0<n_1$.
Let $r^*$ be the greatest element of $\{r\in P^p_{n_0}\mid  q_1\le r\}$. 
Then $r^* = w(p, r^*)\in W_{n_0}(p)$ and $q_1$ witnesses that $r^*$ is compatible with $q_0$. 
So $r^*$ and $q_0$ are compatible elements of $W_{n_0}(p)$,
and hence $q_1\le r^*= q_0$.

(3) Given $q'\le q$ as above, let $r'\in P^p$ be such that  $q'=w(p,r')$. 
Now, to prove that $w(p,r')\in W(q)$, it suffices to show that $w(p,r')=w(q,r')$. Here goes:

$\br$ As $r'\le w(q,r')\le q\le p$, we infer that $w(q,r')\in \{ s\mid r'\le s\le p\}$,
so that $w(q,r')\le w(p,r')$.

$\br$ As $r'\le w(p,r')=q'\le q$, we infer that $w(p,r')\in \{ s\mid r'\le s\le q\}$,
so that $w(p,r')\le w(q,r')$. 

(4)  By Definition~\ref{SigmaPrikry}(\ref{c1}), for all $q,q'\in W(p)$, if $c(q)=c(q')$, then $q$ and $q'$ are compatible, and they have the same $\lh$-value.
It now follows from Clause~(1) that $c\restriction W(p)$ is injective.
\end{proof}

\begin{lemma}\label{l15} Suppose that $\bar p\le p'\le p$ and $q\in W(\bar p)$. Then $w(p,q)=w(p,w(p',q))$.\footnote{For future reference, we point out that this fact relies only on clauses (\ref{c4}) and (\ref{c5}) of Definition~\ref{SigmaPrikry}.}
\end{lemma}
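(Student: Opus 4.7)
The plan is to unpack both $w(p,q)$ and $w(p,w(p',q))$ via Definition~\ref{SigmaPrikry}\eqref{c5} and then close the argument by two inclusion computations. First I would set $n:=\lh(q)-\lh(p')$ and $m:=\lh(p')-\lh(p)$; the chain $q\le\bar p\le p'\le p$ (guaranteed by $q\in W(\bar p)$) together with clause~\eqref{c4} of Definition~\ref{SigmaPrikry} yields $q\le^n p'$ and $q\le^{n+m}p$. Writing $q':=w(p',q)$, clause~\eqref{c5} characterises $q'$ as the greatest element of $A:=\{r\le^n p':q\le^0 r\}$, $w(p,q)$ as the greatest element of $B:=\{r\le^{n+m}p:q\le^0 r\}$, and, since $\lh(q')=\lh(q)$ forces $q'\le^{n+m}p$, also $w(p,q')$ as the greatest element of $C:=\{r\le^{n+m}p:q'\le^0 r\}$.

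Next I would observe that $p'\le p$ gives $A\subseteq B$, so in particular $q'=\max A\le\max B=w(p,q)$. Because $\lh(q')=\lh(q)=\lh(w(p,q))$, this sharpens to $q'\le^0 w(p,q)$, which places $w(p,q)$ inside $C$ and therefore yields $w(p,q)\le w(p,q')$.

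For the reverse inequality I would use that $q\le^0 q'$ (by membership of $q'$ in $A$), so that any $r\in C$ automatically satisfies $q\le q'\le r$ with $\lh(r)=\lh(q')=\lh(q)$; equivalently, $C\subseteq B$. Hence $w(p,q')\le w(p,q)$, and combining the two inequalities produces $w(p,q)=w(p,w(p',q))$. I do not anticipate any real obstacle: the entire argument is bookkeeping of $\lh$-levels together with the maximality in clause~\eqref{c5}, which is consistent with the footnote's remark that the lemma relies only on clauses~\eqref{c4} and~\eqref{c5} of Definition~\ref{SigmaPrikry}.
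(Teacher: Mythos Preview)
Your proof is correct. You unpack the maximality characterisation from clause~\eqref{c5} directly and argue by two set inclusions to obtain the two-way inequality $w(p,q)\le w(p,w(p',q))$ and $w(p,w(p',q))\le w(p,q)$.

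The paper takes a slightly different route: it observes that $w(p,q)$ and $w(p,w(p',q))$ both lie in $W_n(p)$ for $n=\lh(q)-\lh(p)$, notes that $q$ witnesses their compatibility (since $q\le w(p,q)$ and $q\le w(p',q)\le w(p,w(p',q))$), and then invokes Lemma~\ref{lemma7}(1) (that $W_n(p)$ is an antichain) to conclude equality. This is shorter but relies on an earlier lemma, whose proof in turn uses clauses~\eqref{c4} and~\eqref{c5}. Your argument is more self-contained: it appeals only to the defining maximality in clause~\eqref{c5} and the level bookkeeping from clause~\eqref{c4}, making the footnote's claim transparently visible without the detour through the antichain property.
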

\begin{proof}
As $\lh(w(p,q))=\lh(q)=\lh(w(p',q))=\lh(w(p,w(p',q))$,
we infer the existence of some $n<\omega$ such that both $w(p,q)$ and $w(p,w(p',q))$ belong to $W_n(p)$.
By Lemma~\ref{lemma7}(1), then, it suffices to verify that the two are compatible.
And indeed, we have $q\le w(p,q)$ and $q\le w(p',q)\le w(p,w(p',q))$.
\end{proof}

\begin{lemma}\label{l14} 
\begin{enumerate}
\item\label{C1l14} $\mathbb P$ does not add bounded subsets of $\kappa$;
\item\label{C2l14} For every regular cardinal $\nu\geq\kappa$, if there exists $p\in P$ for which $p\Vdash_{\mathbb P}\cf(\check\nu)<\check\kappa$,
then there exists $p'\le p$ with $|W(p')|\geq\nu$;\footnote{For future reference, we point out that this fact relies only on clauses (\ref{c4}),(\ref{c2}),(\ref{c5}) and (\ref{c6}) of Definition~\ref{SigmaPrikry}.
Furthermore, we do not need to know that $\one$ decides a value for $\kappa^+$.}
\item\label{C3l14} Suppose $\one\Vdash_{\mathbb P}``\check\kappa\text{ is singular}"$. Then $\mu=\kappa^+$ iff, for all $p\in P$, $|W(p)|\leq\kappa$.
\end{enumerate}
\end{lemma}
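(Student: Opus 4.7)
For part~(\ref{C1l14}), given a $\mathbb P$-name $\dot X$ with $p\Vdash\dot X\s\check\alpha$ for some $\alpha<\kappa$, I would first use the graded-poset clause to extend $p$ to some $p'$ of length $n$ with $|\alpha|<\kappa_n$, and then diagonalize over $\beta<\alpha$: at each successor step invoke the Prikry property (Corollary~\ref{l6}(1)) to direct-extend and decide ``$\check\beta\in\dot X$'', and at each limit take a lower bound using the $\kappa_n$-directed-closure of $\mathbb P_n$ from Definition~\ref{SigmaPrikry}(\ref{c2}). The terminal $q\le^0 p'$ fully decides $\dot X$, exhibiting it as a ground-model set.

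For part~(\ref{C2l14}), assuming $p\Vdash\cf(\check\nu)<\check\kappa$, I would first use plain density to pass to $p_0\le p$ and fix a specific $\lambda<\kappa$ with $p_0\Vdash\cf(\check\nu)=\check\lambda$; let $\dot f$ be a name for a cofinal $\check\lambda\to\check\nu$ forced by $p_0$. Extend once more to $p_1\le^0 p_0$ with $\kappa_{\lh(p_1)}>\lambda$. Now build a $\le^0$-decreasing sequence $\langle q_\beta\mid\beta\le\lambda\rangle$ with $q_0:=p_1$: at step $\beta+1$ apply Corollary~\ref{l6}(2) to the $0$-open dense set of conditions deciding $\dot f(\check\beta)$ to obtain $q_{\beta+1}\le^0 q_\beta$ and $n_\beta<\omega$ with every $r\in P^{q_{\beta+1}}_{n_\beta}$ deciding $\dot f(\check\beta)$; at limit stages use Definition~\ref{SigmaPrikry}(\ref{c2}) at level $\lh(p_1)$, legitimate since $\lambda<\kappa_{\lh(p_1)}$. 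Set $q:=q_\lambda$. By Lemma~\ref{lemma7}(1), for each $\beta$ the set $W_{n_\beta}(q)\s W(q)$ is a maximal antichain below $q$ whose members each decide $\dot f(\check\beta)$, so there are at most $|W(q)|$ many possible values forced for $\dot f(\check\beta)$ below $q$. Suppose toward contradiction that $|W(p')|<\nu$ for every $p'\le p$. Then the range of $\dot f$, as seen from $q$, lies in a $V$-set of size at most $\lambda\cdot|W(q)|<\nu$, hence bounded in the regular cardinal $\nu$, contradicting that $q$ forces $\dot f$ cofinal.

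Part~(\ref{C3l14}) is then a quick corollary. The forward direction is immediate from Definition~\ref{SigmaPrikry}(\ref{csize}): $|W(p)|<\mu=\kappa^+$ gives $|W(p)|\le\kappa$. For the converse, assume $|W(p)|\le\kappa$ for every $p$ and, for contradiction, that $\mu>(\kappa^+)^V$; setting $\nu:=(\kappa^+)^V$, we have $\one\Vdash|\check\nu|\le\check\kappa$, and since $\cf(\check\nu)$ is a regular cardinal while $\check\kappa$ is forced singular, $\one\Vdash\cf(\check\nu)<\check\kappa$; part~(\ref{C2l14}) then produces $p'$ with $|W(p')|\ge\nu>\kappa$, a contradiction. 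The main technical obstacle is part~(\ref{C2l14}): coordinating a diagonal construction of length $\lambda$ with the strong Prikry property so that the $\lambda$ many independent choices of $\dot f(\check\beta)$ can be funneled through a single condition $q$ whose tree $W(q)$ simultaneously bounds them all, where the choice $\kappa_{\lh(p_1)}>\lambda$ is exactly what keeps the limit stages of the construction closed under directed-closure.
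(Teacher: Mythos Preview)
Your proposal is correct and follows essentially the same approach as the paper's proof. One minor slip to fix: in part~(\ref{C2l14}) you write ``extend once more to $p_1\le^0 p_0$ with $\kappa_{\lh(p_1)}>\lambda$'', but a $0$-step extension preserves length, so $\lh(p_1)=\lh(p_0)$ and you cannot raise $\kappa_{\lh(p_1)}$ this way. You mean a general extension $p_1\le p_0$, obtained by iterating the graded-poset clause to increase the length---exactly as you did correctly in part~(\ref{C1l14}). With that correction, your argument matches the paper's.
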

\begin{proof} (1) Suppose that $p$ forces that $\sigma$ is a name for a subset of some $\theta<\kappa$.
By possibly iterating Clause~(\ref{c4}) of Definition~\ref{SigmaPrikry} finitely many times, we may find $p'\le p$ with $\kappa_{\lh(p')}>\theta$. Denote $n:=\lh(p')$.
Then by Corollary~\ref{l6}(1) and Definition~\ref{SigmaPrikry}\eqref{c2},
we may find a $\le_0$-decreasing sequence of conditions, $\langle p_\alpha\mid \alpha\leq\theta\rangle$,
with $p_0\le^0 p'$, such that, for each $\alpha<\theta$,  $p_\alpha$ $\mathbb P$-decides whether $\alpha$ belongs to $\sigma$.
Then $p_\theta$ forces that $\sigma$ is a ground model set.

(2) Suppose $\theta,\nu$ are regular cardinals with $\theta<\kappa\leq\nu$, $\dot f$ is a $\mathbb P$-name for a function from $\theta$ to $\nu$,
and $p\in P$ is a condition forcing that the image of $f$ is cofinal in $\nu$.
Denote $n:=\lh(p)$.
By Definition~\ref{SigmaPrikry}\eqref{c4}, we may assume that $\kappa_{n}>\theta$.
For all $\alpha<\theta$, let $D_\alpha$ denote the open set of conditions below $p$ that $\mathbb P$-decides a value for $f(\alpha)$.
As $D_\alpha$ is dense below $p$, by Corollary~\ref{l6}(2) and Definition~\ref{SigmaPrikry}\eqref{c2},
we may find a $\le_0$-decreasing sequence of conditions $\langle p_\alpha\mid \alpha<\theta\rangle$, with $p_0\le^0 p$,
and a sequence $\langle n_\alpha\mid \alpha<\theta\rangle$ of elements of $\omega$,
such that, for all $\alpha<\theta$, $P^{p_\alpha}_{n_\alpha}\s D_\alpha$.

By Definition~\ref{SigmaPrikry}\eqref{c2}, let $p'$ be a lower bound for $\{p_\alpha\mid \alpha<\theta\}$.
Evidently, $P^{p'}_{n_\alpha}\s D_\alpha$ for every $\alpha<\theta$.
Now, let $$A_\alpha:=\{\beta<\nu\mid \exists p\in P^{p'}_{n_\alpha}[p\Vdash_{\mathbb P}\dot{f}(\check{\alpha})=\check{\beta}]\}.$$
By Lemma~\ref{lemma7}(1), we have $A_\alpha=\{\beta<\nu\mid \exists p\in W_{n_\alpha}(p')[p\Vdash_{\mathbb P}\dot{f}(\check{\alpha})=\check{\beta}]\}$.
Let $A:=\bigcup_{\alpha<\theta}A_\alpha$.
As $|A|\leq\sum_{\alpha<\theta}|W_{n_\alpha}(p')|\leq\theta\cdot |W(p')|$,
it follows that if $|W(p')|<\nu$, then $\sup(A)<\nu$,
and $p'$ forces that the range of $f$ is bounded below $\nu$, which would form a contradiction.
So $|W(p')|\geq\nu$.

(3) The forward implication follows from Definition~\ref{SigmaPrikry}\eqref{csize}.

Next, suppose that, for all $p\in P$, $|W(p)|\leq\kappa$.
Towards a contradiction, suppose that there exist $p\in P$ forcing that $\kappa^+$ is collapsed.
Denote $\nu:=\kappa^+$.
As $\one\Vdash_{\mathbb P}``\check\kappa\text{ is singular}"$, this means that $p\Vdash_{\mathbb P}\cf(\check\nu)<\check\kappa$,
contradicting Clause~(2).
\end{proof}

\section{Examples}\label{examples}

\subsection{Vanilla Prikry} Throughout this subsection assume that $\kappa$ is a measu\-rable cardinal and that $\mathcal{U}$ is a normal measure over it.
We shall show that the classical Prikry forcing $\mathbb{P}$ to singularize $\kappa$ to cofinality $\omega$ fits into the $\Sigma$-Prikry framework. Recall that $\mathbb{P}:=(P,\le)$, where
conditions in $P$ are pairs of the form $p=(s,A)$, with $s$ being a finite increasing sequence in $\kappa$ and $A\in \mathcal{U}$ with $\sup(s)<\min(A)$.
The ordering $\le$ is defined by $(s,A)\le (t,B)$ iff $t\sqsubseteq s$, $A\subseteq B$ and $s\setminus t \subseteq B$.

Let $X\in [{}^{<\omega}\kappa]^\kappa$. The diagonal intersection of a family $\{ A_s\mid s\in X\}\s\mathcal{U}$ is given by
$$\diagonal \{ A_s\mid s\in X\}:=\{\alpha<\kappa\mid \forall s\in X (\max(s)<\alpha\rightarrow \alpha\in A_s)\}.$$
Since $\mathcal{U}$ is normal, $\diagonal \{ A_s\mid s\in X\}\in \mathcal{U}$.

Let $\Sigma$ be the $\omega$-sequence with constant value $\kappa$ and $\mu:=\kappa^+$. The notion of length associated to $\mathbb{P}$, $\lh: P\rightarrow \omega$, is given by $\lh(s,A):=|s|$.
Finally, define $c:P\rightarrow{}^{<\omega}\kappa$ via $c(s,A):=s$.
In the next proposition we verify that $(\mathbb{P},\lh, c)$ is $\Sigma$-Prikry.
\begin{prop}\label{VanillaPrikryIsSigmaPriky}
$(\mathbb{P},\lh, c)$ is $\Sigma$-Prikry.
\end{prop}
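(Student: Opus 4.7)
The plan is to verify each of the seven clauses of Definition~\ref{SigmaPrikry} for the triple $(\mathbb{P}, \lh, c)$. Clauses (\ref{c4}), (\ref{c1}), and (\ref{csize}) are essentially immediate: $\lh(s,A) = |s|$ is a surjection, extensions can only enlarge the stem, and a one-step extension of $(s,A)$ is obtained by appending $\min(A)$ and shrinking $A$ accordingly; if $c(s,A) = c(t,B)$ then $s = t$ and $(s, A \cap B)$ is a common $0$-step extension; and conditions in $W((s,A))$ correspond bijectively to stems $t \sqsupseteq s$ with $t \setminus s \s A$, of which there are at most $\kappa < \kappa^+ = \mu$. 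For clause~(\ref{c2}), note that two same-length conditions that are compatible must share their stem, so a directed subfamily $D \s P_n$ of cardinality ${<}\kappa$ has a common stem $s$ and admits the lower bound $(s, \bigcap\{A \mid (s,A) \in D\}) \in P_n$ by $\kappa$-completeness of $\mathcal{U}$.

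Clause~(\ref{c5}) is only slightly more involved. Given $p = (s, A)$ and $q = (t, B)$ with $q \le^{n+m} p$ (so $s \sqsubseteq t$ and $|t| = |s| + n + m$), I claim $m(p,q) := (t \restriction (|s|+n), A)$. Indeed, any $r = (u, A') \le^n p$ with $q \le^m r$ is forced to have $u = t \restriction (|s|+n)$, and the inequalities $r \le p$ and $q \le r$ require $B \cup (t \setminus u) \s A' \s A$; the ${\le}$-largest admissible choice is therefore $A' = A$, which is indeed admissible since $B \s A$ and $t \setminus s \s A$. Specialising to $m = 0$ yields $w(p, q) = (\stem(q), A)$, and clause~(\ref{itsaprojection}) follows at once: for $p' \le p$ and $q_1 \le q_2$ in $W(p')$ one has $\stem(q_2) \sqsubseteq \stem(q_1)$ with $\stem(q_1) \setminus \stem(q_2) \s A$, so $w(p, q_1) \le w(p, q_2)$.

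The main obstacle is verifying clause~(\ref{c6}), the Complete Prikry Property. Given a $0$-open set $U \s P$, a condition $p = (s, A)$, and $n < \omega$, I would proceed in two stages, exploiting separately the normality of $\mathcal{U}$ and its partition property. For each finite $t \s A$ with $\min(t) > \max(s)$, set $\mathcal{A}_t := \{B \in \mathcal{U} \mid B \s A \setminus (\max(t)+1),\ (s \cup t, B) \in U\}$; by $0$-openness of $U$, the family $\mathcal{A}_t$ is downward closed in $\mathcal{U}$. Choose $B_t \in \mathcal{A}_t$ if this set is nonempty, and otherwise set $B_t := A \setminus (\max(t)+1)$; then let $A^* := A \cap \diagonal\{B_t \mid t \in [A]^{<\omega}\}$, which lies in $\mathcal{U}$ by normality. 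A short check shows that for every $t \in [A^*]^n$ and every $B \in \mathcal{U}$ with $B \s A^* \setminus (\max(t)+1)$, the condition $(s \cup t, B)$ lies in $U$ precisely when $\mathcal{A}_t$ was originally nonempty. Finally, apply the partition property of the normal measure $\mathcal{U}$ (Rowbottom's theorem) to the colouring $f : [A^*]^n \to 2$ recording this dichotomy, obtaining $A^{**} \in \mathcal{U}$ with $A^{**} \s A^*$ on which $f$ is constant. Then $q := (s, A^{**}) \le^0 p$, and by construction either $P^q_n \s U$ or $P^q_n \cap U = \emptyset$, as required.
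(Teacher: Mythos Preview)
Your verification of clauses (\ref{c4}), (\ref{c2}), (\ref{c1}), (\ref{csize}), and (\ref{itsaprojection}) is correct in spirit, and your argument for the Complete Prikry Property in clause~(\ref{c6}) is a correct and fully detailed proof, whereas the paper merely cites the classical SPP argument from \cite{Gitik-handbook}.

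There is, however, a genuine slip in your treatment of clause~(\ref{c5}). You claim that $m(p,q) = (t \restriction (|s|+n),\, A)$, but when $n > 0$ this pair is not a condition at all: writing $u := t \restriction (|s|+n)$, the elements of $u \setminus s$ were drawn from $A$, so $\max(u) \geq \min(A)$, violating the requirement $\sup(u) < \min(A')$ needed for $(u, A')$ to lie in $P$. Your analysis of the ordering constraints $B \cup (t \setminus u) \subseteq A' \subseteq A$ is correct as far as it goes, but you must additionally impose $\min(A') > \max(u)$ to stay inside $P$. The largest admissible $A'$ is therefore $A \setminus (\max(u)+1)$, giving $m(p,q) = (u,\, A \setminus (\max(u)+1))$, which is exactly the formula the paper uses. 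The same correction applies to your expression for $w(p,q)$; once this is made, your verification of clause~(\ref{itsaprojection}) goes through with only cosmetic changes.
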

\begin{proof}
We go over the clauses of Definition~\ref{SigmaPrikry}.
\begin{enumerate}
\item For $p=(s,A)\in P$, $(s{}^\smallfrown\langle \nu\rangle, A\setminus \nu+1)\in P^p_1$, for all $\nu\in A$. Moreover, by definition of $\le$, if $q\le p$ then $\lh(q)\geq \lh(p)$.
\item Follows from the $\kappa$-completeness of $\mathcal{U}$.
\item Let $p,q\in P$ and assume that $c(p)=c(q)=s$. Set $p:=(s,A)$ and $q:=(s,B)$. Clearly $(s, A\cap B)$ is in $P^{p}_0\cap P^q_0$.
\item Let $p:=(s,A)\in P$, $n,m<\omega$ and $q:=(t,B)\in P^p_{n+m}$. Set $u:=t\upharpoonright (|s|+n)$. Then $r^*:=(u, A\setminus \max(u)+1)$ is the greatest element in $\{r\in P^p_n\mid q\le r\}$.

\item Let $p\in P$ and $n<\omega$. Denoting $p:=(s,A)$, we have that
$W_n(p)=\{(s{}^\smallfrown t, A\setminus \max(t)+1)\mid t\in [A]^{n}, t\text{ is increasing }\}$.
Clearly, $|W_n(p)|=\kappa<\mu$.

\item Let $p'\le p$ and $q,q'\in W(p')$ and assume $q'\le q$. Set $p:=(s,A)$,  $q:=(t,B)$ and $q':=(u,C)$. By the previous items, $w(p,q)=(t, A\setminus \max(t)+1)$ and $w(p,q')=(u,A\setminus \max(u)+1)$ and, since $q'\le q$, is clear that $w(p,q')\le w(p,q)$, as desired.

\item\label{CCPVanilla} This follows in a similar fashion to the classical proof of the SPP in \cite[Lemma 1.13]{Gitik-handbook}.\qedhere
\end{enumerate}
\end{proof}

As a corollary, we infer that the product of two $\Sigma$-Prikry notions of forcing need not be $\Sigma$-Prikry.
For this, let  $\mathcal{U}$ and $\mathcal{V}$ be  normal measures over the same measurable cardinal $\kappa$ and let $\mathbb P$ and $\mathbb Q$ be the corresponding Vanilla Prikry notions of forcing.
We claim that $\mathbb P\times\mathbb Q$ adds a bounded subset of $\kappa$, so that, by Lemma~\ref{l14}\eqref{C1l14}, it is not $\Sigma$-Prikry.

Let $\vec{s}=\langle s_n\mid n<\omega\rangle$ and $\vec{t}=\langle t_n\mid n<\omega\rangle$ be pairwise generic Prikry-sequences with respect to $\mathbb P$ and $\mathbb Q$, i.e., $\vec{s}$ (resp.~$\vec{t}$) generates a generic filter for $\mathbb P$ (resp.~$\mathbb Q$) and furthermore $\vec{s}\notin V[\vec{t}]$  and $\vec{t}\notin V[\vec{s}]$. By mutual genericity, $X:=\{n\in\omega\mid s_n< t_n\}$ is infinite and it is also not hard to check that $X\notin V$.
In particular, $\mathbb{P}\times \mathbb Q$ adds a real.

\subsection{Supercompact Prikry forcing}

Let $\kappa<\lambda$ be two cardinals and assume that  is $\mathcal{U}$ a $\lambda$-supercompact measure on $\mathcal{P}_\kappa(\lambda)$, namely, $\mathcal{U}$ is a $\kappa$-complete, normal and fine ultrafilter over $\mathcal{P}_\kappa(\lambda)$ (cf.~\cite[p. 301]{Kan}). In this section we prove that $\mathbb{P}$, the Supercompact Prikry forcing with respect to $\mathcal{U}$ for  singularizing $\kappa$ to cofinality $\omega$ and collapse the interval $[\kappa, \lambda^{<\kappa}]$, falls also into the $\Sigma$-Prikry framework. Recall that for $x,y\in \mathcal{P}_\kappa(\lambda)$, $x\prec y$ iff $x\s y$ and $\otp(x)<\otp(y\cap \kappa)$.

Recall that conditions are of the form $(\vec{x}, A)$, where $\vec{x}$ is a finite $\prec$-increasing sequence in $\mathcal{P}_\kappa(\lambda)$, called the {\it stem} of the condition,  and $A\in \mathcal{U}$.  $(\vec{x},A)\le (\vec{y},B)$ iff $\vec{y}\sqsubseteq \vec{x}$, $\vec{x}\setminus \vec{y}\subseteq B$ and $A\subseteq B$.

Given a set of stems $X$ the diagonal intersection of a family $\{A_s\mid s\in X\}\s \mathcal{U}$ is given by
$$\diagonal \{A_s \mid s\in X\}:=\{y\in \mathcal{P}_\kappa(\lambda)\mid \forall s\in X(s\prec y\rightarrow y\in A_s)\}.$$
Again, normality of $\mathcal{U}$ implies that $\diagonal \{A_s \mid s\in X\}\in \mathcal{U}$.
Also, one can prove a version of the classical R\"owbottom Lemma for $\lambda$-supercompact measures.

Let $\Sigma$ be the $\omega$-sequence with constant value $\kappa$ and $\mu:=(\lambda^{<\kappa})^+$.
The notion of length associated to $\mathbb{P}$, $\lh: P\rightarrow \omega$, is given by $\lh(\vec{x},A):=|\vec{x}|$.

Finally, define $c:P\rightarrow{}^{<\omega}(\lambda^{<\kappa})$ via $c(\vec x,A):=\vec x$.
Mimicking the proof of Proposition \ref{VanillaPrikryIsSigmaPriky} one can prove the next proposition:

\begin{prop}\label{SuperCompactPrikryIsSigmaPrikry}
$(\mathbb{P},\lh, c)$ is $\Sigma$-Prikry.\qed
\end{prop}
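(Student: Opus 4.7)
The plan is to verify the seven clauses of Definition~\ref{SigmaPrikry} in direct analogy with the proof of Proposition~\ref{VanillaPrikryIsSigmaPriky}, replacing ordinals below $\kappa$ by elements of $\mathcal{P}_\kappa(\lambda)$ and the order $<$ by $\prec$ throughout. The $\kappa$-completeness, normality and fineness of $\mathcal{U}$ play the roles that $\kappa$-completeness and normality of the normal measure played in the Vanilla case, together with the supercompact R\"owbottom Lemma alluded to in the text.

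For clauses (1)--(3): given $p=(\vec x,A)$ and any $y\in A$, the pair $(\vec x{}^\smallfrown\langle y\rangle,\, A\cap\{z:y\prec z\})$ lies in $P^p_1$ and $\lh$ is manifestly monotone; $\kappa$-completeness of $\mathcal{U}$ yields $\kappa$-directed closure of each $\mathbb{P}_n$; and if $c(p)=c(q)=\vec x$ with $p=(\vec x,A)$, $q=(\vec x,B)$, then $(\vec x,A\cap B)\in P^p_0\cap P^q_0$. For clause~(4), given $p=(\vec x,A)$ and $q=(\vec y,B)\le^{n+m} p$, I would set $\vec u:=\vec y\upharpoonright(|\vec x|+n)$ and $r^*:=(\vec u,\, A\cap\{z:\max(\vec u)\prec z\})$; any competitor $(\vec u,C)\le^n p$ with $q\le^m(\vec u,C)$ automatically satisfies $C\s A\cap\{z:\max(\vec u)\prec z\}$, so $r^*$ is the greatest such element. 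Specializing to $m=0$ gives the explicit formula for $w(p,q)$ from which the order-preservation of clause~(6) reads off directly, while clause~(5) follows since the length-$n$ stems extending $\vec x$ form a set of size at most $\lambda^{<\kappa}<\mu$.

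The main effort is clause~(7), the Complete Prikry Property. I would mirror the supercompact analogue of the SPP proof from \cite[Lemma~1.13]{Gitik-handbook}: given a $0$-open $U\s P$, $p=(\vec x,A)\in P$ and $n<\omega$, for each length-$n$ $\prec$-increasing stem-extension $\vec t$ of $\vec x$, apply the Prikry Property (Corollary~\ref{l6}(1)) to find $A_{\vec t}\in\mathcal{U}$ such that the condition $(\vec x{}^\smallfrown\vec t,A_{\vec t})$ decides membership in $U$. Form $A':=\diagonal\{A_{\vec t}:\vec t\}$ and apply the supercompact R\"owbottom Lemma to the induced $2$-colouring of length-$n$ stems drawn from $A'$, obtaining $A''\in\mathcal{U}$ on which the decision is constant; then $q:=(\vec x,A'')\le^0 p$ satisfies either $P^q_n\cap U=\emptyset$ or $P^q_n\s U$, the $0$-openness of $U$ being invoked to propagate the homogeneous decision from each $w(p,\cdot)$-representative to its entire $P^q_n$-fiber. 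The principal technicality is ensuring that the diagonal intersections preserve $\prec$-increasingness of stems and remain in $\mathcal{U}$; this is handled exactly as in the cited literature once the supercompact versions of normality and fineness are in play.
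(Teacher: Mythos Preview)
Your approach is essentially the paper's own: the paper simply instructs the reader to mimic Proposition~\ref{VanillaPrikryIsSigmaPriky}, and you do exactly that. Your treatment of clauses~(1)--(6) is correct.

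There is one slip in your argument for clause~(7). You invoke ``the Prikry Property (Corollary~\ref{l6}(1))'' to find, for each stem extension $\vec x{}^\smallfrown\vec t$, a set $A_{\vec t}\in\mathcal U$ such that $(\vec x{}^\smallfrown\vec t,A_{\vec t})$ ``decides membership in $U$''. This is circular: Corollary~\ref{l6} is \emph{derived from} clause~(7), so it cannot be used in establishing it; and in any case membership in a $0$-open subset $U\subseteq P$ is not a sentence of the forcing language, so Corollary~\ref{l6}(1) does not literally apply. What you actually need at that step is the trivial dichotomy: either some $0$-extension of $(\vec x{}^\smallfrown\vec t,A)$ lies in $U$ --- in which case take $A_{\vec t}$ to be its measure-one part and record colour~$1$ --- or none does, and you record colour~$0$. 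With this in hand, the diagonal intersection and the supercompact R\"owbottom Lemma finish the argument exactly as you outline. Once this reference is replaced by the elementary case split, your proof is correct and coincides with the intended one.
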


\subsection{Diagonal Supercompact Prikry Forcing}
Here we show that the Diagonal Supercompact Prikry Forcing, due to Gitik and Sharon \cite{GitSha}, can be regarded as a $\Sigma$-Prikry forcing. For economy of the discourse henceforth we shall refer to the Diagonal Supercompact Prikry Forcing simply as \textsf{GS} forcing, where the abbreviation \textsf{GS} stands for Gitik-Sharon.

Let $\langle\kappa_n\mid n<\omega\rangle$ be an increasing sequence of regular cardinals, and denote $\kappa:=\kappa_0$. Let $\Sigma$ be the $\omega$-sequence with constant value $\kappa$ and $\mu:=(\sup_n\kappa_n)^+$. Suppose that $\mathcal{U}$ is a supercompact measure on $\mathcal{P}_\kappa(\mu^+)$,
and let $U_n$ be its projection projection onto $\mathcal{P}_\kappa(\kappa_n)$.\footnote{Namely, for each $X\s \mathcal{P}_\kappa(\kappa_n)$, $X\in U_n$ iff $\pi_n^{-1}[X]\in \mathcal{U}$,
where $\pi_n$ is the standard projection between
$\mathcal{P}_\kappa(\mu^+)$ and $\mathcal{P}_\kappa(\kappa_n)$.}
It is routine to check that, for each $n<\omega$, $U_n$ is a $\kappa_n$-supercompact measure over $\mathcal{P}_\kappa(\kappa_n)$.

We begin defining the universe $P$ of the \textsf{GS} poset $\mathbb{P}$:
\begin{definition}\label{GSUniverse}
Define $P$ as the set of sequences $p=\langle x^p_0,\dots, x^p_{n-1} A^p_n,\allowbreak A^p_{n+1},\dots \rangle$ such that each $x_i\in \mathcal{P}_\kappa(\kappa_i)$, $x_i\prec x_{i+1}$, and $A_k\in U_k$.
Denote $\lh(p):=n$ and call the sequence $\langle x_0,\dots, x_{n-1}\rangle$ the \emph{stem of $p$}. Typically we will denote this sequence by $\stem(p)$. The order is the usual: we extend the stems by picking elements from the measure one sets, and then shrink the measure one sets.
\end{definition}

\begin{definition}
Let $p=\langle x^p_0,\dots, x^p_{n-1}, A^p_n, A^p_{n+1}, \dots, \rangle$ in $P$.
For  $x\in A^p_{\lh(p)}$,  $p{}^\curvearrowright\langle x\rangle$ stands for the unique condition
$$q:=
\langle x^p_0,\dots, x^p_{\lh(p)-1}, x, B^p_{\lh(p)+1}, B^p_{\lh(p)+2}, \dots\rangle,
$$
where, for each $i\geq \lh(p)$, $B^p_i:=\{y\in A^p_i\mid x\prec y \}$.
Similarly, for all $n\geq \lh(p)$, and any $\prec$-increasing $\vec{x}:=\langle x_{\lh(p)},\dots, x_{n+1}\rangle\in \prod_{i=\lh(p)}^{n+1} A^p_i$, we define $p{}^\curvearrowright\vec{x}$ to be the weakest extension of $p$ with stem equal to $\stem(p){}^\smallfrown \vec{x}$.

\end{definition}
Note that whenever $q\le p$, for some $\vec{x}$, we have that $q\le ^0 p{}^\curvearrowright\vec{x}\le p$. I.e. this is exactly the needed notion to verify clauses \eqref{c5}, \eqref{csize}, \eqref{itsaprojection} of Definition~\ref{SigmaPrikry}. In particular, for $q,p$
as above, $w(p,q)=p{}^\curvearrowright\vec{x}$.

Finally, define $c:P\rightarrow {}^{<\omega}(P_\kappa(\kappa^{+\omega}))$ via
$$c(\langle x^p_0, \dots, x^p_{\lh(p)-1}, A^p_{\lh(p)},A^p_{\lh(p)+1},\dots\rangle):=\langle x^p_0, \dots, x^p_{\lh(p)-1}\rangle.$$

\begin{prop}\label{GSSigmaPriky}
$(\mathbb{P},\lh, c)$ is $\Sigma$-Prikry.
\end{prop}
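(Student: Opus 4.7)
The plan is to verify the seven clauses of Definition~\ref{SigmaPrikry} in order, largely adapting the template of Proposition~\ref{VanillaPrikryIsSigmaPriky} to the diagonal structure of the measure sequence. Clause~(\ref{c4}) is immediate, since $\lh$ is a monotone surjection and $p{}^\curvearrowright\langle x\rangle\in P^p_1$ for any $x\in A^p_{\lh(p)}$. For clause~(\ref{c1}), if $c(p)=c(q)$ then $\stem(p)=\stem(q)$, and the condition obtained from this common stem by pointwise intersecting the measure-one coordinates belongs to $P^p_0\cap P^q_0$. Clauses~(\ref{c5}) and~(\ref{itsaprojection}) follow from the identification $w(p,q)=p{}^\curvearrowright\vec{x}$ (with $\vec{x}:=\stem(q)\setminus\stem(p)$) recorded in the remark preceding the proposition; more generally, $m(p,q)=p{}^\curvearrowright(\vec{x}\restriction n)$, and order-preservation of $q\mapsto w(p,q)$ is transparent from the formula.

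For clause~(\ref{c2}), I would argue that any pairwise-compatible family inside $\mathbb{P}_n$ must consist of conditions sharing a common stem (since a fixed-length $n$ common extension cannot strictly extend two distinct stems), and for any directed sub-family of size ${<}\kappa$ the lower bound is the condition with that common stem and pointwise-intersected measure-one coordinates, well-defined thanks to the $\kappa$-completeness of each $U_i$. Clause~(\ref{csize}) then reduces to a cardinal count: $W(p)$ is parameterized by the finite $\prec$-increasing stem extensions of $\stem(p)$ drawn from $\bigcup_i\mathcal{P}_\kappa(\kappa_i)$, so $|W(p)|\le\sup_n\kappa_n<\mu$ under the standing arithmetic on the $\kappa_n$'s.

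The main obstacle is clause~(\ref{c6}), the Complete Prikry Property. Given a $0$-open $U\s P$, $p\in P$ and $n<\omega$, my plan is to recursively shrink the measure-one sets $A^p_i$ over $\lh(p)\le i<\lh(p)+n$: at stage $i$, for each partial stem extension $\vec{s}$ of length $i-\lh(p)$ already available, I would invoke the ordinary Prikry property (Corollary~\ref{l6}(1)) together with a normality/diagonal-intersection step against $U_i$, in the spirit of the strong Prikry argument of~\cite{GitSha}, to arrange that membership of $p{}^\curvearrowright\vec{x}$ in $U$ is determined uniformly by $\stem(\vec{x})$. After $n$ such stages, a final $0$-step refinement using the $0$-openness of $U$ produces $q\le^0 p$ for which $P^q_n$ is either disjoint from or contained in $U$. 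The delicate point will be the bookkeeping: each diagonal intersection is indexed by already-chosen partial stems, and at every stage one must verify that the resulting intersection still lies in the relevant $U_i$---this is where the normality and fineness of each supercompact measure $U_i$ are used essentially.
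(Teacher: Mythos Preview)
Your proposal is essentially correct and follows the same outline as the paper's (very terse) proof, which merely points to the discussion preceding the proposition for Clauses~(\ref{c5}), (\ref{csize}), (\ref{itsaprojection}), to $\kappa$-completeness of the measures for Clause~(\ref{c2}), and to the proof of the Strong Prikry Property for the \textsf{GS} poset for Clause~(\ref{c6}). One point to fix: your invocation of Corollary~\ref{l6}(1) inside the CPP argument is circular, since that corollary is \emph{derived from} Clause~(\ref{c6}) of Definition~\ref{SigmaPrikry}; the SPP-style argument you correctly sketch around it (recursive shrinking via diagonal intersections, followed by a Rowbottom-type homogeneity step using normality of the $U_i$'s, as in~\cite{GitSha}) is self-contained and needs no appeal to the Prikry property as a black box.
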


\begin{proof}
We go over the clauses of Definition~\ref{SigmaPrikry}.

Clause~\eqref{c2} follows from the completeness of the normal measures. Clauses\linebreak~\eqref{c4} and \eqref{c1} are clear. Clauses \eqref{c5}, \eqref{csize} follow from the above discussion. In particular for any $p$, $W_n(p)=\{p{}^\curvearrowright\vec{x}\mid \vec{x}\in \prod_{i=\lh(p)}^{n-1} A^p_i, \prec\text{-increasing}\}$, which has cardinality $\kappa_n$. Clause~\eqref{itsaprojection} follows from the definition of the ordering. And Clause~\eqref{c6} follows in a similar fashion to the proof of the SPP for the \textsf{GS} poset.
\end{proof}

\subsection{AIM forcing}
We now consider the notion of forcing from \cite{AIM}.
 Suppose $\mu$ is a strongly inaccessible cardinal,
 and  $\Sigma=\langle \kappa_n\mid n<\omega\rangle$ is a strictly increasing sequence of $\mu$-supercompact cardinals.
 Denote $\kappa := \sup_{n<\omega}\kappa_n$.
 For each $n<\omega$, let $U_n$ be some $\kappa_n$-complete fine normal ultrafilter on $P_{\kappa_n}(\mu)$, and for $\kappa\leq\alpha<\mu$ let $U_{n, \alpha}$ be the projection of $U_n$ to
   $P_{\kappa_n}(\alpha)$ via the map $x \mapsto x \cap \alpha$.

\begin{definition} We define $(\mathbb P,\lh,c)$ with $\mathbb P=(P,\le)$, as follows.
$P$ consists of all sequences $p=\langle p_n\mid n < \omega\rangle$ such that
  for  some  $\lh(p)<\omega$, we have:
\begin{enumerate}
\item\label{domandrgef}  For each $n<\lh(p)$, $p_n$ is a function  $f^p_n$ with $\dom(f^p_n) \subseteq [\kappa, \mu)$,
    $|\dom(f^p_n)|< \mu$, and for all $\eta \in \dom(f^p_n)$, $f^p_n(\eta) \in P_{\kappa_n}(\eta)$;

\item For each $n\geq\lh(p)$, $p_n$ is a triple $(a^p_n, A^p_n, f^p_n)$,  where:
\begin{enumerate}[label={\alph*}),ref={\alph*}]

\item \label{fanda} $a^p_n$ is a subset of $[\kappa, \mu)$ with  $|a^p_n| < \mu$ that moreover admits a maximal element $\alpha^p_n$;

\item $A^p_n \in U_{n, \alpha^p_n}$;

\item  $f^p_n$ is a function with $\dom(f^p_n) \subseteq [\kappa, \mu)\setminus a_n^p$,
    $|\dom(f^p_n)|< \mu$ such that, for all $\eta \in \dom(f^p_n)$, $f^p_n(\eta) \in P_{\kappa_n}(\eta)$.
\end{enumerate}
\item \label{smallainc} $\langle a^p_n\mid \lh(p)\leq n < \omega\rangle$ is $\subseteq$-increasing.
  \end{enumerate}

We let $p \le q$ if and only if:
\begin{enumerate}

\item \label{whandlh}  $\lh(p)\geq\lh(q)$.

\item \label{fextend} For all $n$, $f^p_n \supseteq f^q_n$;

\item \label{chooseandproject} For $n$ with $\lh(q) \leq n < \lh(p)$,  $a^q_n \subseteq \dom(f^p_n)$,
   $f^p_n(\alpha^q_n) \in A^q_n$, and $f^p_n(\eta) = f^p_n(\alpha^q_n) \cap \eta$ for
   all $\eta \in a^q_n$.\footnote{This is the corresponding analogous of condition (2d) in \cite[Definition 2.10]{Gitik-handbook} for the Extender-based Prikry forcing. See also Subsection~\ref{ebpf-section} below.}

\item \label{increasing} $(f^p_n(\alpha^q_n))_ {\lh(q) \leq n < \lh(p)}$ is $\subseteq$-increasing.

\item \label{coherence} For $n\geq\lh(p)$, we have $a^q_n \subseteq a^p_n$,
 and $x \cap \alpha^q_n \in A^q_n$ for all $x \in A^p_n$.

\item \label{funky2}       For $n\geq\lh(p)$,  if $\lh(q) < \lh(p)$, then $f^p_{\lh(p)-1}(\alpha^q_{\lh(p)-1})\subseteq x$ for all $x \in A^p_n$.
\end{enumerate}

Finally, by cardinality considerations, we find $c:P\rightarrow\mu$ which is an injection.
\end{definition}

By virtue of Lemma~4 and Corollary~1 of \cite{AIM}, $\mathbb P$ collapses all cardinals $\theta$ with $\kappa < \theta < \mu$ and makes $\mu$ the successor of $\kappa$.
Next, we briefly go over the clauses of Definition~\ref{SigmaPrikry} to explain why $(\mathbb{P}, \lh, c)$ is $\Sigma$-Prikry.

By the completeness of the measures, we get that for each $n$, $\mathbb P_n$ is $\kappa_n$-directed-closed giving Clause~\eqref{c2}. Clauses~\eqref{c4} and \eqref{c1} are clear. For Clauses~\eqref{c5}, \eqref{csize}, \eqref{itsaprojection} we need to recall some definitions and facts from \cite{AIM}.

\begin{definition} For conditions $r \le q$, we let
   $\stem(r, q)$ denote the finite sequence $(f^r_i(\alpha^q_i))_{\lh(q) \leq i < \lh(r)}$.
\end{definition}

\begin{definition}\label{def2} Let $q$ be a condition. Let $l\in(\lh(q),\omega)$ and  $s \in \prod_{\lh(q) \le i < l} A^q_i$
   be a $\subseteq$-increasing sequence.
    Define $q + s$ as the $\omega$-sequence $(r_k)_{k < \omega}$ such that:
\begin{itemize}
\item   For $k < \lh(q)$,   $r_k = f^q_k$.
\item   For $\lh(q) \leq k < l$, $r_k$ is the function with
   domain $\dom(f^q_k) \cup a^q_k$ such that $r_k(\eta)=f^q_k(\eta)$ for $\eta\in\dom(f^q_k)$
   and $r_k(\eta) = s_k \cap \eta$ for $\eta \in a^q_k$.
\item   For $k\geq l$,
     $r_k = (f^q_k, a^q_k, B_k)$ where
    $B_k = \{ x \in A^q_k : s_{l-1} \subseteq x \}$.\footnote{Notice that $f^q_{l-1}(\alpha^q_{l-1})=s_{l-1}$, as $s_{l-1}\s \alpha^q_{l-1}$.}
\end{itemize}
By convention we also define $q + \langle\rangle = q$.
\end{definition}

In \cite[Lemma 8]{AIM}, it is shown that for  $q$ and $s$ as in Definition~\ref{def2}, $q + s$ is a condition in $P$ extending $q$. Moreover, for each $r\le q$, $r \le^0 q + \stem(r, q)$ and also is not hard to check that $q + \stem(r, q)$ is the weakest extension of $q$ above $r$; i.e., in our notation, $q +\stem(r,q) = w(q,r)$. Thereby, for each $n$, $W_n(q)$ is the set of all conditions of the form $q + s$, where $s\in \prod_{\lh(q)\leq i<n} A^q_i$. It thus follows that $W_n(q)$ has cardinality less than $\mu$, hence yielding clauses \eqref{c5} and \eqref{csize}.

For Clause~\eqref{itsaprojection}, let $q'\le q$ and $r_0,r_1\in W(q')$ with $r_0\le r_1$. By the previous discussion, for each $i\in 2$,  there is $s_i$ such that $r_i=q+s_i$ and $w(q,q+s_i)=q+s_i$.
Altogether, we have shown that $w(q,q+s_0)\le w(q,q+s_1)$, hence yielding Clause~\eqref{itsaprojection}.

Finally, Clause~\eqref{c6} of Definition~\ref{SigmaPrikry} follows in a similar fashion to the Prikry property arguments in \cite[Lemma 10 and 11]{AIM}.
The main point is that given a $0$-open set $U$ and a condition $p$, for every possible $s$ as in the above definitions, we  check if there is $q\le p+s$ in $U$. If there is, call it $p_s$; otherwise, let $p_s:=p+s$.
Doing this via a careful induction one constructs $q\le^0 p$, such that, for all $s$, $q+s\le^0 p_s$. Then we shrink the measure one sets to ensure that either each $q+s$ is in $U$ or none is.

To sum up, we have the following:
\begin{prop} $(\mathbb{P},\lh, c)$ is $\Sigma$-Prikry. \qed
\end{prop}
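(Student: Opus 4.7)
The plan is to verify the seven clauses of Definition~\ref{SigmaPrikry} in turn, using the machinery of the $q+s$ operation and the identification $w(q,r)=q+\stem(r,q)$ recalled above. I begin with the bookkeeping clauses. Clause~\eqref{c4} is immediate: the ordering forces $\lh$ to be non-decreasing, and given $p$ one produces a one-step extension by choosing any $x\in A^p_{\lh(p)}$ and appealing to Definition~\ref{def2} with the singleton stem $s=\langle x\rangle$, giving a condition of $\lh$-value $\lh(p)+1$. Clause~\eqref{c2} follows from the $\kappa_n$-completeness and normality of the measures $U_n$: given a directed family of size $<\kappa_n$ in $\mathbb P_n$, the $f$-parts amalgamate (inaccessibility of $\mu$ keeping the union of their domains of size $<\mu$), and the measure-one sets intersect to still lie in the relevant $U_{n,\alpha}$. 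For Clause~\eqref{c1}, since $c$ was chosen to be an injection, $c(p)=c(q)$ forces $p=q$, whence $p\in P^p_0\cap P^q_0$.

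Next, I handle the tree clauses~\eqref{c5}, \eqref{csize}, and \eqref{itsaprojection}. The identity $w(q,r)=q+\stem(r,q)$ puts $W_n(q)$ in bijection with the $\subseteq$-increasing sequences $s\in\prod_{\lh(q)\le i<\lh(q)+n}A^q_i$. For Clause~\eqref{c5}, given $p$ and $q\le^{n+m}p$, I would set $m(p,q):=p+\langle\stem(q,p)(i)\mid i<n\rangle$ and verify, directly from the ordering, that this is the greatest $r\le^n p$ with $q\le^m r$; in particular $w(p,q)=p+\stem(q,p)$. Clause~\eqref{csize} then follows from inaccessibility of $\mu$, since each $A^p_i\in U_{i,\alpha^p_i}$ has size $<\mu$, and so does each product $\prod_{\lh(p)\le i<\lh(p)+n}A^p_i$. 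Finally, Clause~\eqref{itsaprojection} becomes transparent under the stem identification: if $r'\le r$ in $W(p')$, then $\stem(r,p)$ is an initial segment of $\stem(r',p)$, so that $w(p,r')\le w(p,r)$.

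The main obstacle is Clause~\eqref{c6}, the Complete Prikry Property, which I would derive by adapting the Prikry-property arguments of \cite[Lemmas~10 and 11]{AIM} to the uniform formulation demanded here. Fix a $0$-open set $U$, a condition $p$, and $n<\omega$. For each admissible stem $s\in\prod_{\lh(p)\le i<\lh(p)+n}A^p_i$, test whether some $q\le p+s$ lies in $U$; if yes, fix such a $q=:p_s$, otherwise set $p_s:=p+s$. Using $\kappa_{\lh(p)}$-completeness together with a normality-driven diagonal intersection indexed by the (fewer than $\mu$ many) possible stems, amalgamate the family $\{p_s\}$ into a single $q^{*}\le^0 p$ such that $q^{*}+s\le^0 p_s$ for every $s$. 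A final shrinking of the measure-one sets of $q^{*}$, exploiting the $0$-openness of $U$ to propagate membership from $p_s$ to all of $P^{q^{*}+s}_0$, yields the desired $q\le^0 p$ for which either $P^q_n\cap U=\emptyset$ or $P^q_n\s U$. The delicate point in this last step is to carry out the diagonal intersection coordinate-by-coordinate while respecting the constraints~\eqref{coherence} and~\eqref{funky2} of the AIM ordering, and this is where I would expect to spend most of the technical effort.
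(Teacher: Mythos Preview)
Your proposal is correct and follows essentially the same route as the paper: the paper's own ``proof'' is the discussion preceding the proposition (the proposition itself carries only a \qed), and that discussion verifies Clauses~\eqref{c4}--\eqref{itsaprojection} exactly as you do --- via $\kappa_n$-directed-closure from completeness of the measures, injectivity of $c$, the identity $w(q,r)=q+\stem(r,q)$, and the resulting description of $W_n(q)$ --- and handles Clause~\eqref{c6} by the same outline you give (test each $p+s$ against $U$, amalgamate the $p_s$ by a careful induction into a single $q\le^0 p$ with $q+s\le^0 p_s$, then shrink measure-one sets), deferring the details to \cite[Lemmas~10 and 11]{AIM}. Your acknowledgment that the diagonal-intersection step respecting \eqref{coherence} and \eqref{funky2} is where the work lies matches the paper's ``careful induction'' caveat.
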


\subsection{Extender-based Prikry Forcing}\label{ebpf-section}

Suppose that $\langle\kappa_n\mid n<\omega\rangle$ is an increasing sequence of regular cardinals, let $\kappa:=\sup_{n<\omega}\kappa_n$, $\mu:=\kappa^+$ and let $\lambda>\mu$ be such that $\lambda^{<\lambda}=\lambda$. Suppose further that each $\kappa_n$ carries a $(\kappa_n, \lambda+1)$-extender $E_n:=\langle E_{n,\alpha}\mid \alpha<\lambda\rangle$. Then extender-based Prikry forcing with respect to these extenders, denoted by $\mathbb{P}_{ebpf}$, adds sequences $\langle f_n\mid n<\omega\rangle$, where each $f_n:\lambda\rightarrow\kappa_n$ is generic for the Cohen forcing $\add(\mu, \lambda)$, and an unbounded set $F\subset\lambda$ with the following properties:
\begin{itemize}
\item
setting $t_\alpha\in\prod_n\kappa_n$ by $t_\alpha(n)=f_n(\alpha)$, we have that $t_\alpha\notin V$ iff $\alpha\in F$;
\item
for all $\alpha<\beta$ both in $F$, for all large $n$,  $t_\alpha(n)<t_\beta(n)$;
\item
for all $\alpha\in F$, $t_\alpha$ is a Prikry generic sequence with respect to the measures $\langle E_{n,\alpha}\mid n<\omega\rangle$ (i.e. for all measure one sets from these ultrafilters, the sequence meets them on a tail end)

\end{itemize}
In particular, forcing with $\mathbb{P}_{ebpf}$ makes $2^{\kappa}=\lambda$. This forcing plays an important role in the proof of Theorem~\ref{thm2}.
In a sequel to this paper \cite{partII}, we will describe this forcing in detail and prove that it is $\Sigma$-Prikry, where $\Sigma:=\langle\kappa_n\mid n<\omega\rangle$.

\subsection{Lottery sum} Suppose that $\Sigma=\langle \kappa_n\mid n<\omega\rangle$ is non-decreasing sequence of regular uncountable cardinals,
converging to some cardinal $\kappa$, $\mu$ is a cardinal, and $\langle (\mathbb Q_i,\lh_i,c_i)\mid i<\nu\rangle$
is a sequence of $\Sigma$-Prikry notions of forcing such that $\nu<\mu$ and, for all $i<\nu$, $\one_{\mathbb Q_i}\Vdash_{\mathbb Q_i}\check\mu=\check\kappa^+$.

Define $P:=\{ (i,p)\mid i<\nu, p\in Q_i\}\cup\{\emptyset\}$
and an ordering $\le$, letting $(i,p)\le (j,q)$ iff $i=j$ and $p\le_{\mathbb Q_i}q$,
as well as setting $\emptyset\le x$ for any $x\in P$.  Set $\mathbb{P}:=(P,\le)$ and note that $\one_{\mathbb P}=\emptyset$ and
$\one_{\mathbb P}\Vdash_{\mathbb Q_i}\check\mu=\check\kappa^+$.
Now, define $\lh:P\rightarrow\omega$ by letting $\lh(\emptyset):=0$
and $\lh(i,p):=\lh_i(p)$. Finally, define $c:P\rightarrow\mu\times\mu$ by letting $c(\emptyset):=(0,0)$ and $c(i,p):=(i,c_i(p))$.
\begin{prop}
$(\mathbb{P},\lh, c)$ is $\Sigma$-Prikry.
\end{prop}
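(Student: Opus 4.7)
The plan is to verify the seven clauses of Definition~\ref{SigmaPrikry}, together with the setup requirement $\one_{\mathbb P}\Vdash_{\mathbb P}\check\mu=\check\kappa^+$, by exploiting the essentially disjoint-union structure of the lottery sum. The key observation is that for any non-top $p=(i,p')\in P$, the map $(i,r')\mapsto r'$ is an order-isomorphism of $\cone{p}$ onto $\cone{p'}$ in $\mathbb Q_i$ preserving $\lh$-values, and any two conditions lying in distinct components $\mathbb Q_i,\mathbb Q_j$ are automatically incompatible in $\mathbb P$. In particular, any generic $G$ for $\mathbb P$ concentrates on a single component (once it picks any $(i,p)\in G$ by density) and thereby becomes a $\mathbb Q_i$-generic, so the resulting extension satisfies $\mu=\kappa^+$ by the analogous hypothesis on $\mathbb Q_i$. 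One bookkeeping caveat: as written, $c$ could take the value $(0,0)$ both on $\emptyset$ and on some $(0,p)$ with $c_0(p)=0$, which would violate Clause~(\ref{c1}); this is remedied by letting $c$ map into $\{\ast\}\cup(\nu\times\mu)$ (still of size $\leq\mu$) with $c(\emptyset):=\ast$ and $c(i,p):=(i,c_i(p))$.

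With that tweak in place, Clauses~(\ref{c4})--(\ref{itsaprojection}) all reduce routinely to their counterparts for the $\mathbb Q_i$'s. Surjectivity of $\lh$ and its monotonicity in~(\ref{c4}) are immediate, and a one-step extension above $\emptyset$ is any $(i,q')$ with $\lh_i(q')=1$. For~(\ref{c2}), any directed $D\s\mathbb P_n\cup\{\one_{\mathbb P}\}$ of size $<\kappa_n$ has all non-$\one$ elements sitting in a single component (else a cross-component pair would have no common lower bound at all), so $\kappa_n$-directed-closure of the corresponding $(\mathbb Q_i)_n\cup\{\one_{\mathbb Q_i}\}$ yields the bound. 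For~(\ref{c1}), the revised $c$ forces $c(p)=c(q)$ to place $p$ and $q$ in a common component $\mathbb Q_i$ with $c_i(p')=c_i(q')$, and $\Sigma$-Prikryness of $\mathbb Q_i$ supplies $r'\in(Q_i)^{p'}_0\cap(Q_i)^{q'}_0$, whence $(i,r')\in P^p_0\cap P^q_0$. For Clauses~(\ref{c5}) and~(\ref{itsaprojection}), set $m(p,q):=(i,m_i(p',q'))$ for same-component $p,q$ and $m(\emptyset,(i,q')):=(i,m_i(\one_{\mathbb Q_i},q'))$; order-preservation of the $w$-map transfers directly from $\mathbb Q_i$. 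For~(\ref{csize}), $|W((i,p'))|=|W_i(p')|<\mu$, while $|W(\emptyset)|\leq 1+\sum_{i<\nu}|W_i(\one_{\mathbb Q_i})|<\mu$ by regularity of $\mu=\kappa^+$ and $\nu<\mu$.

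The main step is Clause~(\ref{c6}), the Complete Prikry Property. The key observation is that any $0$-open $U\s P$ decomposes componentwise: for each $i<\nu$, the trace $U_i:=\{p'\in Q_i\mid (i,p')\in U\}$ is $0$-open in $\mathbb Q_i$, since $0$-step extensions in $\mathbb P$ cannot leave a fixed component. Given $p\in P$ and $n<\omega$, split into cases. If $p=(i,p')$, apply the CPP of $(\mathbb Q_i,\lh_i,c_i)$ to $(p',n,U_i)$ to obtain $q'\le^0 p'$ such that $(Q_i)^{q'}_n$ is either disjoint from or contained in $U_i$; then $q:=(i,q')$ is a $0$-step extension of $p$ that inherits the analogous dichotomy for $P^q_n\cap U$. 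If $p=\emptyset$, fix any $i<\nu$ and apply the CPP of $\mathbb Q_i$ to $(\one_{\mathbb Q_i},n,U_i)$ to get $q'\in(Q_i)_0$ with the dichotomy; since $\lh(q)=\lh_i(q')=0=\lh(\emptyset)$, the condition $q:=(i,q')$ is a legitimate $0$-step extension of $\emptyset$ in $\mathbb P$ and again inherits the dichotomy. The only real obstacle throughout is the special treatment of $\emptyset$, which has $\lh$-value $0$ yet simultaneously sees all $\nu$ components; once one commits to a fixed $i<\nu$ below it, every clause reduces to its $\mathbb Q_i$-counterpart.
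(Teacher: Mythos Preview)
Your proof is correct and follows essentially the same approach as the paper's, verifying each clause of Definition~\ref{SigmaPrikry} by reducing to the corresponding property of the individual $\mathbb Q_i$'s and handling the top condition $\emptyset$ by first dropping into some fixed component. Your observation about the potential collision $c(\emptyset)=(0,0)=c(0,p)$ when $c_0(p)=0$ (and $\lh_0(p)>0$) is a genuine bookkeeping point that the paper's definition glosses over; your fix is the natural one.
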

\begin{proof}
We go over the clauses of Definition~\ref{SigmaPrikry}.
\begin{enumerate}
\item As $(i,q)\le (j,p)$ entails $i=j$ and $q\le_{\mathbb Q_i} p$, we infer from the fact that $(\mathbb{Q}_i,\lh_i, c_i)$ is $\Sigma$-Prikry, that $\lh(i,p)=\lh(p)\leq \lh(q)=\lh(i,q)$.
\item Let $D\in [P_n\cup\{\emptyset\}]^{<\kappa_n}$ be directed. Find $i<\nu$ such that $D\setminus\{\emptyset\}\s \{i\}\times (Q_i)_n$. Now, as $(\mathbb{Q}_i,\lh_i, c_i)$ is $\Sigma$-Prikry,
there exists a lower bound $p$ for $\{ q\in (Q_i)_n\mid (i,q)\in D\}$. Evidently, $(i,p)$ is a lower bound for $D$.
\item Follows from the fact that, for all $i<\nu$, $(\mathbb{Q}_i,\lh_i, c_i)$ being $\Sigma$-Prikry.
\item[(4)-(5)] Let $x\in P$ and $(i,q)\in P^x$. If $x=\emptyset$ it is not hard to check that $w(\emptyset,\emptyset)=\emptyset$ and that,
more generally, $m(\emptyset, (i,q))=(i, m(\one_{\mathbb{Q}_i}, q))$. Hence, $W(\emptyset)\s\{\emptyset\}\,\cup\, \bigcup_{i<\nu} W(\one_{\mathbb{Q}_i})$.  
Analogously if $x\neq \emptyset$, say $x=(i,p)$,  then $m((i,p),(i,q))=(i,m(p,q))$ and thus, in particular,  $W_n(i,p)=\{i\}\times W_n(p)$. Since $\nu<\mu$, this yields clauses (4) and (5).
\item[(6)] This is obvious.
\item[(7)] Let $U\s P$ be a $0$-open set and fix $x\in P$ and $n<\omega$. If $x\neq\emptyset$,
denote $(i,p):=x$. Otherwise, let $(i,p):=(0,\one_{\mathbb P_0})$. In both cases, $(i,p)\le^0 x$.
Now, it is not hard to check that $U_i:=\{q\in Q_i\mid (i,q)\in U\}$ is also $0$-open. Since $(\mathbb{Q}_i,\lh_i, c_i)$ is $\Sigma$-Prikry we may find $q\in (Q_i)^p_0$ such that either $(Q_i)^q_n\s U_i$ or $(Q_i)^q_n\cap U_i=\emptyset$. Set $y:=(i,q)$. Clearly $y\le^0 x$. If $P^q_n\cap U\neq \emptyset$ then clearly $(Q_i)_n^q\cap U_i\neq \emptyset$, hence $(Q_i)_n^q\s U_i$, and thus $P^q_n\s U$.\qedhere
\end{enumerate}
\end{proof}

\section{Forking projections}\label{sectionforking}

In this section, we introduce the notion of \emph{forking projection} which will play a key role in Section~\ref{killingone}.

\begin{definition}\label{forking} Suppose that $(\mathbb P,\lh_{\mathbb P},c_{\mathbb P})$ is a $\Sigma$-Prikry triple,
$\mathbb A=(A,\unlhd)$ is a notion of forcing,
and $\lh_{\mathbb A}$ and $c_{\mathbb A}$ are functions with $\dom(\lh_{\mathbb A})=\dom(c_{\mathbb A})=A$.

A pair of functions $(\pitchfork,\pi)$ is said to be a \emph{forking projection} from $(\mathbb A,\lh_{\mathbb A})$ to $(\mathbb P,\lh_{\mathbb P})$ 
iff all of the following hold:
\begin{enumerate}
\item\label{frk1}  $\pi$ is a projection from $\mathbb A$ onto $\mathbb P$, and $\lh_{\mathbb A}=\lh_{\mathbb P}\circ\pi$;
\item\label{frk0}  for all $a\in A$, $\fork{}{a}$ is an order-preserving function from $(\cone{\pi(a)},\le)$ to $(\conea{a},\unlhd)$;
\item\label{frk3} for all $p\in P$, $\{ a \in A\mid \pi(a)=p\}$ admits a greatest element, which we denote by $\myceil{p}{\mathbb A}$;
\item\label{frk4}  for all $n,m<\omega$ and $b\unlhd^{n+m} a$, $m(a,b)$ exists and satisfies:
 $$m(a,b)=\fork{}{a}(m(\pi(a),\pi(b)));$$
\item\label{frk5} for all $a\in A$ and $r\le\pi(a)$,   $\pi(\fork{}{a}(r))=r$;
\item\label{frk6} for all $a\in A$ and $r\le\pi(a)$, $a=\myceil{\pi(a)}{\mathbb A}$ iff $\fork{}{a}(r)=\myceil{r}{\mathbb A}$;
\item\label{frk7} for all $a\in A$, $a'\unlhd^0 a$ and $r\le^0 \pi(a')$, $\fork{}{a'}(r)\unlhd\fork{}{a}(r)$.
\setcounter{condition}{\value{enumi}}
\end{enumerate}

The pair $(\pitchfork,\pi)$ is said to be a forking projection from 
$(\mathbb A,\lh_{\mathbb A},c_{\mathbb A})$ to $(\mathbb P,\lh_{\mathbb P},c_{\mathbb P})$ 
iff, in addition to all of the above, the following holds:
\begin{enumerate}
\setcounter{enumi}{\value{condition}}
\item\label{frk2}  for all $a,a'\in A$, if $c_{\mathbb A}(a)=c_{\mathbb A}(a')$, then $c_{\mathbb P}(\pi(a))=c_{\mathbb P}(\pi(a'))$ and, for all $r\in P_0^{\pi(a)}\cap P_0^{\pi(a')}$, $\fork{}{a}(r)=\fork{}{a'}(r)$.
\end{enumerate}
\end{definition}
\begin{example} Suppose that $(\mathbb P,\lh_{\mathbb P},c_{\mathbb P})$ is any $\Sigma$-Prikry triple and that $\mathbb Q$ is any notion of forcing with a greatest element $\one_{\mathbb Q}$.
Let $\mathbb A=(A,\unlhd)$ be the product forcing $\mathbb P\times \mathbb Q$. Define $\pi:A\rightarrow P$ via $\pi(p,q):=p$,
and, for each $a=(p,q)$ in $A$, define $\fork{}{a}:\cone{p}\rightarrow\conea{a}$ via $\fork{}{a}(r):=(r,q)$.
Set $\lh_{\mathbb A}:=\lh_{\mathbb P}\circ\pi$.
Define $c_{\mathbb A}:A\rightarrow\rng(c_{\mathbb P})\times Q$ via $c_{\mathbb A}(p,q):=(c_{\mathbb P}(p),q)$.
Then $\myceil{p}{\mathbb A}=(p,\one_{\mathbb Q})$, $w((p,q),(p',q'))=(w(p,p'),q)$,
and the pair $(\pitchfork,\pi)$ is a forking projection from $(\mathbb A,\lh_{\mathbb A},c_{\mathbb A})$ to $(\mathbb P,\lh_{\mathbb P},c_{\mathbb P})$.
\end{example}

\begin{lemma}\label{forkingfacts} Suppose that $(\pitchfork,\pi)$ is a forking projection from $(\mathbb A,\lh_{\mathbb A})$ to $(\mathbb P,\lh_{\mathbb P})$.
Let $a\in A$.
\begin{enumerate}
\item  $\fork{}{a}\restriction W(\pi(a))$ forms a bijection from $W(\pi(a))$ to $W(a)$;
\item for all $n<\omega$ and $r\le^n\pi(a)$,  $\fork{}{a}(r)\in A_n^a$.
\end{enumerate}
\end{lemma}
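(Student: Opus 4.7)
My plan is to derive both statements directly from clauses (\ref{frk0}), (\ref{frk1}), (\ref{frk4}), and (\ref{frk5}) of Definition~\ref{forking}, reading clause~(\ref{frk4}) in its specialization $m=0$, which simply says that $\fork{}{a}$ intertwines the $w$-operations above $a$ and above $\pi(a)$.

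For part~(2), given $r\le^n\pi(a)$, clause~(\ref{frk0}) places $\fork{}{a}(r)$ in $\conea{a}$, so it remains to compute its length. By clause~(\ref{frk5}), $\pi(\fork{}{a}(r))=r$, and combining this with $\lh_{\mathbb A}=\lh_{\mathbb P}\circ\pi$ from clause~(\ref{frk1}) we obtain
\[
\lh_{\mathbb A}(\fork{}{a}(r)) \;=\; \lh_{\mathbb P}(r) \;=\; \lh_{\mathbb P}(\pi(a))+n \;=\; \lh_{\mathbb A}(a)+n,
\]
so $\fork{}{a}(r)\in A_n^a$, as required.

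For part~(1), I would first observe that $\fork{}{a}$ is injective on all of $\cone{\pi(a)}$ (hence in particular on $W(\pi(a))$): if $\fork{}{a}(r)=\fork{}{a}(s)$, apply $\pi$ and invoke clause~(\ref{frk5}) to conclude $r=s$. The work is thus to identify the image $\fork{}{a}[W(\pi(a))]$ with $W(a)$. Here the key tool is clause~(\ref{frk4}) applied with $m=0$, which reads $w(a,b)=\fork{}{a}(w(\pi(a),\pi(b)))$ for every $b\unlhd a$ (note that $\pi(b)\le\pi(a)$ holds by the order-preservation built into clause~(\ref{frk1}), so $w(\pi(a),\pi(b))$ makes sense and lies in $W(\pi(a))$). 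This immediately gives $W(a)\subseteq\fork{}{a}[W(\pi(a))]$.

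For the reverse inclusion, given $q\in W(\pi(a))$, fix $q^*\le\pi(a)$ with $q=w(\pi(a),q^*)$ and set $b^*:=\fork{}{a}(q^*)$. Clause~(\ref{frk0}) yields $b^*\unlhd a$, clause~(\ref{frk5}) yields $\pi(b^*)=q^*$, and then clause~(\ref{frk4}) (with $m=0$) gives
\[
w(a,b^*) \;=\; \fork{}{a}(w(\pi(a),\pi(b^*))) \;=\; \fork{}{a}(w(\pi(a),q^*)) \;=\; \fork{}{a}(q),
\]
so $\fork{}{a}(q)\in W(a)$. Combining the two inclusions with the injectivity already established produces the desired bijection, and no genuine obstacle arises: everything is extracted from the definitional clauses, the only move worth isolating being the reading of clause~(\ref{frk4}) at $m=0$ as the intertwining identity above.
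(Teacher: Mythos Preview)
Your proof is correct and follows exactly the approach indicated by the paper, which simply cites Clauses~(\ref{frk4}) and~(\ref{frk5}) for part~(1) and Clauses~(\ref{frk1}), (\ref{frk0}), (\ref{frk5}) for part~(2). You have unpacked precisely the intended argument: the $m=0$ instance of Clause~(\ref{frk4}) as the intertwining identity $w(a,b)=\fork{}{a}(w(\pi(a),\pi(b)))$ yields both inclusions, and Clause~(\ref{frk5}) gives injectivity.
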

\begin{proof} (1) By Clauses \eqref{frk4} and \eqref{frk5} of Definition~\ref{forking}.

(2) By Clauses \eqref{frk1}, \eqref{frk0} and \eqref{frk5} of Definition~\ref{forking}.
\end{proof}

\begin{lemma}\label{zer0pen} Suppose that $(\pitchfork,\pi)$ is a forking projection from $(\mathbb A,\lh_{\mathbb A})$ to $(\mathbb P,\lh_{\mathbb P})$.
Let $U\subseteq A$ and $a\in A$. Denote $U_{a}:=U\cap(\conea{a})$.
\begin{enumerate}
\item If $U_a$ is $0$-open, then so is $\pi[U_{a}]$;
\item If $U_{a}$ is dense below $a$, then $\pi[U_{a}]$ is dense below $\pi(a)$.
\end{enumerate}
\end{lemma}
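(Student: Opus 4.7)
The plan is to use the forking projection machinery in a very direct way: to push $0$-openness and density from $\mathbb A$ down to $\mathbb P$ we use $\pi$ to map witnesses down, and to lift a condition $p\le\pi(a)$ back up into $\conea{a}$ we apply $\pitchfork_a$. Both directions of both clauses reduce to the interaction of $\pi$ with $\pitchfork_a$ encoded in Definition~\ref{forking}, together with the trivial fact that $\pi$ is order-preserving and that $\pitchfork_a$ maps into $\conea{a}$.

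For Clause~(1), note first that since $\pi$ is order-preserving, $\pi[U_a]\s\cone{\pi(a)}$. The $(\Leftarrow)$ direction is immediate: if $P^p_0\s\pi[U_a]$, then in particular $p\in P^p_0$, hence $p\in\pi[U_a]$. For the nontrivial $(\Rightarrow)$ direction, let $p\in\pi[U_a]$, pick $b\in U_a$ with $\pi(b)=p$, and let $p'\le^0 p$. Consider $b':=\fork{}{b}(p')$. By Clause~\eqref{frk0} of Definition~\ref{forking}, $b'\in\conea{b}$, i.e., $b'\unlhd b$; by Clause~\eqref{frk5}, $\pi(b')=p'$; and by Clause~\eqref{frk1},
$$\lh_{\mathbb A}(b')=\lh_{\mathbb P}(\pi(b'))=\lh_{\mathbb P}(p')=\lh_{\mathbb P}(p)=\lh_{\mathbb A}(b).$$
Therefore $b'\unlhd^0 b$, i.e., $b'\in A^b_0$. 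Since $U_a$ is $0$-open and $b\in U_a$, this yields $b'\in U_a$, hence $p'=\pi(b')\in\pi[U_a]$, as desired.

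For Clause~(2), let $p\le \pi(a)$. By Clause~\eqref{frk0} of Definition~\ref{forking}, $b:=\fork{}{a}(p)\in\conea{a}$, and by Clause~\eqref{frk5}, $\pi(b)=p$. Since $U_a$ is dense below $a$, fix $b'\in U_a$ with $b'\unlhd b$. Since $\pi$ is order-preserving, $\pi(b')\le\pi(b)=p$, so $\pi(b')$ is an extension of $p$ lying in $\pi[U_a]$. As $p\le\pi(a)$ was arbitrary, $\pi[U_a]$ is dense below $\pi(a)$.

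The only mildly subtle step is the verification that $\fork{}{b}(p')\unlhd^0 b$ in Clause~(1); everything else is a one-line appeal to the corresponding item in Definition~\ref{forking}. I do not anticipate any real obstacle.
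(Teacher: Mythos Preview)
Your proof is correct and follows essentially the same approach as the paper. The only minor variation is in Clause~(2): you lift $p\le\pi(a)$ to $\conea{a}$ via $\fork{}{a}(p)$, whereas the paper instead invokes the projection property of $\pi$ (Definition~\ref{forking}\eqref{frk1}) to find $a^*\unlhd a$ with $\pi(a^*)\le p$; both are one-line applications of the forking projection axioms and lead to the same conclusion.
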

\begin{proof}
(1) Suppose $U_a$ is $0$-open. To see that $\pi[U_{a}]$ is $0$-open, let $p\in \pi[U_{a}]$ and $p'\le^0 p$ be arbitrary.
Find $b\in U_{a}$ such that $\pi(b)=p$ and set $b':=\fork{}{b}(p')$.
Clearly, $b'$ is well-defined and by Definition~\ref{forking}\eqref{frk5},  $b'\unlhd^0 b$, so that, by $0$-openness of $U_{a}$, $b'\in U_{a}$.
Again, Definition~\ref{forking}\eqref{frk5} yields $\pi(b')=\pi(\fork{}{b}(p'))=p'$, thus
$p'\in\pi[U_{a}]$, as desired.

(2) Suppose that $U_{a}$ is dense below $a$.
To see that $\pi[U_{a}]$ is dense below $\pi(a)$, let $p\le \pi(a)$ be arbitrary.
Since, by Definition~\ref{forking}\eqref{frk1}, $\pi$ is a projection from $\mathbb{A}$ to $\mathbb{P}$,
we may find $a^*\unlhd a$ such that $\pi(a^*)\le p$.
As $U_{a}$ is dense below $a$, we may then find $a^\star\unlhd a^*$ in $U_{a}$.
Clearly, $\pi(a^\star)\le p$.
\end{proof}

Throughout the rest of this section, suppose that:
\begin{itemize}
\item $\mathbb P=(P,\le)$ is a notion of forcing with a greatest element $\one_{\mathbb P}$;
\item $\mathbb A=(A,\unlhd)$ is a notion of forcing with a greatest element $\one_{\mathbb A}$;
\item $\Sigma=\langle \kappa_n\mid n<\omega\rangle$ is a non-decreasing sequence of regular uncountable cardinals,
converging to some cardinal $\kappa$, and  $\mu$ is a cardinal such that $\one_{\mathbb P}\Vdash_{\mathbb P}\check\mu=\check\kappa^+$;
\item $\lh_{\mathbb P}$ and $c_{\mathbb P}$ are functions witnessing that $(\mathbb P,\lh_{\mathbb P},c_{\mathbb P})$ is a $\Sigma$-Prikry;
\item $\lh_{\mathbb A}$ and $c_{\mathbb A}$ are functions with $\dom(\lh_{\mathbb A})=\dom(c_{\mathbb A})=A$;
\item $(\pitchfork,\pi)$ is a forking projection from $(\mathbb A,\lh_{\mathbb A},c_{\mathbb A})$ to $(\mathbb P,\lh_{\mathbb P},c_{\mathbb P})$.
\end{itemize}

We shall now go over each of the clauses of Definition~\ref{SigmaPrikry} and collect sufficient conditions for the triple $(\mathbb A,\lh_{\mathbb A},c_{\mathbb A})$ to be $\Sigma$-Prikry, as well.

\begin{lemma}\label{C2ASigmaPrikry} $(\mathbb A,\lh_{\mathbb A})$ is a graded poset.
\end{lemma}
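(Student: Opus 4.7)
The plan is to unpack the three defining properties of a graded poset and check each one by invoking the relevant clauses of Definition~\ref{forking} together with the hypothesis that $(\mathbb P,\lh_{\mathbb P})$ is graded.

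First, $\mathbb A=(A,\unlhd)$ is a poset by the running assumptions. For the surjectivity of $\lh_{\mathbb A}:A\to\omega$, I would observe that Clause~\eqref{frk3} of Definition~\ref{forking} supplies $\myceil{p}{\mathbb A}$ for every $p\in P$, so $\pi:A\to P$ is onto. Composing with the surjection $\lh_{\mathbb P}:P\to\omega$ (furnished by the graded structure of $(\mathbb P,\lh_{\mathbb P})$), the identity $\lh_{\mathbb A}=\lh_{\mathbb P}\circ\pi$ from Clause~\eqref{frk1} yields that $\lh_{\mathbb A}$ is a surjection.

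Second, for monotonicity, given $b\unlhd a$, the order-preservation of $\pi$ (Clause~\eqref{frk1}) gives $\pi(b)\le\pi(a)$, and then the graded property of $(\mathbb P,\lh_{\mathbb P})$ yields $\lh_{\mathbb P}(\pi(b))\ge\lh_{\mathbb P}(\pi(a))$; applying $\lh_{\mathbb A}=\lh_{\mathbb P}\circ\pi$ once more gives $\lh_{\mathbb A}(b)\ge\lh_{\mathbb A}(a)$.

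Third, for the existence of a $1$-step extension of an arbitrary $a\in A$, I would use the graded property of $(\mathbb P,\lh_{\mathbb P})$ to pick some $r\le\pi(a)$ with $\lh_{\mathbb P}(r)=\lh_{\mathbb P}(\pi(a))+1$, i.e., $r\le^1\pi(a)$. Then Lemma~\ref{forkingfacts}(2) gives $\fork{}{a}(r)\in A^a_1$, which exactly means $\fork{}{a}(r)\unlhd a$ with $\lh_{\mathbb A}(\fork{}{a}(r))=\lh_{\mathbb A}(a)+1$. There is no real obstacle here; the lemma amounts to a direct assembly of the definitional clauses of a forking projection.
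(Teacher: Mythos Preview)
Your proposal is correct and follows essentially the same approach as the paper: monotonicity via $\pi$ being order-preserving together with $\lh_{\mathbb A}=\lh_{\mathbb P}\circ\pi$, and existence of a $1$-step extension by lifting some $r\in P_1^{\pi(a)}$ through $\fork{}{a}$ using Lemma~\ref{forkingfacts}(2). You additionally verify surjectivity of $\lh_{\mathbb A}$ explicitly, which the paper leaves implicit.
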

\begin{proof} For all $a,b\in A$, $b\unlhd a\implies\pi(b)\le\pi(a)\implies\lh_\mathbb{A}(b)=\lh_{\mathbb P}(\pi(b))\geq\lh_{\mathbb P}(\pi(a))=\lh_\mathbb{A}(a)$.
In addition, as $(\mathbb P,\lh_{\mathbb P})$ is a graded poset, for any given $a\in A$, we may pick $r\in P_1^{\pi(a)}$.
By Lemma~\ref{forkingfacts}(2), then, $\fork{}{a}(r)$ witnesses that $A^a_1$ is non-empty.
\end{proof}

\begin{lemma}\label{313} Let $n<\omega$.
Suppose that for every directed family $D$ of conditions in $\mathbb A_n$ with $|D|<\kappa_n$,
if the map $d\mapsto\pi(d)$ is constant over $D$, then $D$ admits a lower bound in $\mathbb A_n$.

Then $\mathbb{A}_n$ is $\kappa_n$-directed-closed.
\end{lemma}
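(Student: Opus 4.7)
The plan is to reduce the general directed-closedness statement to the restricted hypothesis by first computing a common extension in $\mathbb{P}_n$ and then pulling it back through $\pitchfork$. Concretely, let $D \subseteq A_n \cup \{\one_{\mathbb{A}}\}$ be a directed family with $|D| < \kappa_n$; without loss of generality, assume $D \subseteq A_n$, since $\one_{\mathbb{A}}$ contributes no nontrivial constraint.

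First I would push $D$ through $\pi$. Since $\pi$ is order-preserving (Definition~\ref{forking}(\ref{frk1})) and $\lh_{\mathbb{A}} = \lh_{\mathbb{P}} \circ \pi$, the set $\{\pi(d) \mid d \in D\}$ is a directed subfamily of $P_n$ of size less than $\kappa_n$. Invoking Definition~\ref{SigmaPrikry}(\ref{c2}), $\mathbb{P}_n$ is $\kappa_n$-directed-closed, so I obtain $p^* \in P_n$ with $p^* \le^0 \pi(d)$ for every $d \in D$. I would then form $D' := \{\fork{d}{p^*} \mid d \in D\}$. By Lemma~\ref{forkingfacts}(2), each $\fork{d}{p^*}$ belongs to $A_0^d$, so in particular $D' \subseteq A_n$; and by Definition~\ref{forking}(\ref{frk5}), $\pi$ is constant on $D'$ with value $p^*$.

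The crux of the argument, and the step I expect to require the most care, is verifying that $D'$ remains directed. Given $\fork{d_1}{p^*}$ and $\fork{d_2}{p^*}$ in $D'$, I would use directedness of $D$ to pick $d_3 \in D$ with $d_3 \unlhd d_1, d_2$. Because $d_1, d_2, d_3 \in A_n$ all share the same $\lh$-value, $d_3 \unlhd d_i$ is actually $d_3 \unlhd^0 d_i$, and since $p^* \le^0 \pi(d_3)$ (both lying in $P_n$), Clause~(\ref{frk7}) of Definition~\ref{forking} applies to deliver $\fork{d_3}{p^*} \unlhd \fork{d_i}{p^*}$ for $i = 1, 2$. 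This is precisely how Clause~(\ref{frk7}) is designed to propagate compatibility across the fibre above $p^*$.

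Once directedness of $D'$ is in hand, the hypothesis of the lemma applies: $D'$ is a directed family in $\mathbb{A}_n$ of size less than $\kappa_n$ on which $\pi$ is constant, so it admits a lower bound $a^* \in A_n$. Finally, for each $d \in D$, Lemma~\ref{forkingfacts}(2) gives $\fork{d}{p^*} \unlhd d$, whence $a^* \unlhd \fork{d}{p^*} \unlhd d$, and $a^*$ serves as the required lower bound for $D$. The overall strategy is thus: push to $\mathbb{P}_n$ using order-preservation, collapse the $\pi$-component using ambient directed-closedness, pull back along $\pitchfork$ using Lemma~\ref{forkingfacts}(2) and Clause~(\ref{frk7}) to preserve directedness, and invoke the hypothesis to conclude.
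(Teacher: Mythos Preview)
Your proof is correct and follows essentially the same route as the paper's: push the directed family through $\pi$, find a lower bound $p^*$ in $\mathbb P_n$ using Clause~(\ref{c2}) for $(\mathbb P,\lh_{\mathbb P},c_{\mathbb P})$, pull back via $\pitchfork$ to get a family with constant $\pi$-value, use Clause~(\ref{frk7}) to verify directedness, and finish with the hypothesis and the fact that $\fork{}{d}(p^*)\unlhd d$. The only cosmetic difference is that the paper cites Clause~(\ref{frk0}) rather than Lemma~\ref{forkingfacts}(2) for the last step, and writes $\fork{}{d}(p^*)$ rather than $\fork{d}{p^*}$.
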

\begin{proof} Suppose that $E$ is a given directed family in $\mathbb A_n$ of size less than $\kappa_n$.
In particular, $\{\pi(e)\mid e\in E\}$ is a directed family in $\mathbb P_n$ of size less than $\kappa_n$; hence, by Definition~\ref{SigmaPrikry}\eqref{c2},
we may find a lower bound for it (in $\mathbb P_n$), say, $r$.
Put $D:=\{ \fork{}{e}(r)\mid e\in E\}$.
By Lemma~\ref{forkingfacts}(2), $D$ is a family of conditions in $\mathbb A_n$ with $|D|<\kappa_n$.
By Definition~\ref{forking}\eqref{frk5}, the map $d\mapsto\pi(d)$ is constant (indeed, with value $r$) over $D$.
\begin{claim} $D$ is directed.
\end{claim}
\begin{proof} Given $d_0,d_1\in D$, fix $e_0,e_1\in E$ such that $d_i=\fork{}{e_i}(r)$ for all $i<2$.
As $E$ is directed, let us pick $e^*\in E$ such that $e^*\unlhd e_0,e_1$. Put $d^*:=\fork{}{e^*}(r)$, so that $d^*\in D$.
Then, by Definition~\ref{forking}\eqref{frk7}, $d^*\unlhd d_0,d_1$.
\end{proof}
Now, by the hypothesis of the lemma, we may pick  a lower bound for $D$ (in $\mathbb A_n$), say, $b$.
By Definition~\ref{forking}\eqref{frk0}, for all $a\in E$, $b\unlhd\fork{}{a}(r)\unlhd a$,
and hence $b$ is a a lower bound for $E$. 
\end{proof}

\begin{lemma}\label{C3ASigmaPriky} For all $a,a'\in A$, if $c_{\mathbb A}(a)=c_{\mathbb A}(a')$, then $A_0^a\cap A_0^{a'}$ is non-empty.

In particular, if $|\rng(c_{\mathbb A})|\le\mu$, then $\mathbb A$ is $\mu^+$-2-linked.
\end{lemma}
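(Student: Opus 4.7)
The plan is to reduce the statement about $\mathbb A$ to the corresponding property of $\mathbb P$, exploiting the fact that clause~\eqref{frk2} of Definition~\ref{forking} was crafted precisely to transfer compatibility-by-$0$-extension from $\mathbb P$ up to $\mathbb A$.

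First, given $a,a'\in A$ with $c_{\mathbb A}(a)=c_{\mathbb A}(a')$, I would invoke the first half of Definition~\ref{forking}\eqref{frk2} to conclude $c_{\mathbb P}(\pi(a))=c_{\mathbb P}(\pi(a'))$. Since $(\mathbb P,\lh_{\mathbb P},c_{\mathbb P})$ is $\Sigma$-Prikry, Definition~\ref{SigmaPrikry}\eqref{c1} then produces some $r\in P_0^{\pi(a)}\cap P_0^{\pi(a')}$.

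Next, I would consider $b:=\fork{}{a}(r)$ and $b':=\fork{}{a'}(r)$, both of which are well-defined as $r\le\pi(a)$ and $r\le\pi(a')$. The second half of Definition~\ref{forking}\eqref{frk2}, applied to the pair $(a,a')$ with this choice of $r$, yields $b=b'$. Finally, Lemma~\ref{forkingfacts}(2) applied with $n=0$ to each of $a$ and $a'$ gives $b\in A_0^a$ and $b=b'\in A_0^{a'}$, so $b\in A_0^a\cap A_0^{a'}$, settling the first part.

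For the parenthetical, if $|\rng(c_{\mathbb A})|\le\mu$, I would fix any injection $\iota:\rng(c_{\mathbb A})\hookrightarrow\mu$ and note that $\iota\circ c_{\mathbb A}:A\to\mu$ witnesses $\mu^+$-$2$-linkedness: any two conditions assigned the same color by $\iota\circ c_{\mathbb A}$ satisfy $c_{\mathbb A}(a)=c_{\mathbb A}(a')$, whence the first part supplies a common $0$-step extension, in particular a common extension. There is no real obstacle here; the entire argument is essentially the observation that clause~\eqref{frk2} of Definition~\ref{forking} is exactly the statement needed to push chain-condition-style witnesses through a forking projection, and the only step of substance is noticing that one must take both $\fork{}{a}(r)$ and $\fork{}{a'}(r)$ and appeal to their forced equality rather than trying to manipulate them separately.
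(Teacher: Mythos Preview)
Your proof is correct and follows essentially the same approach as the paper: invoke Definition~\ref{forking}\eqref{frk2} to pull back to $c_{\mathbb P}$, use Definition~\ref{SigmaPrikry}\eqref{c1} for $\mathbb P$ to find $r\in P_0^{\pi(a)}\cap P_0^{\pi(a')}$, apply the second half of \eqref{frk2} to conclude $\fork{}{a}(r)=\fork{}{a'}(r)$, and finish with Lemma~\ref{forkingfacts}(2). The paper omits the explicit argument for the ``in particular'' clause, but your spelling it out via an injection into $\mu$ is exactly what is intended.
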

\begin{proof}
By Definition~\ref{forking}\eqref{frk2}, $c(\pi(a))=c(\pi(a'))$.
Since $(\mathbb{P},\lh_{\mathbb P},c_{\mathbb P})$ is $\Sigma$-Prikry, Definition~\ref{SigmaPrikry}\eqref{c1} guarantees the existence of some $r\in P_0^{\pi(a)}\cap P_0^{\pi(a')}$ and thus,
again by Definition~\ref{forking}\eqref{frk2}, $\fork{}{a}(r)=\fork{}{a'}(r)$.
Finally, Lemma~\ref{forkingfacts}(2) yields that this common value is in $A_0^a\cap A_0^{a'}$, as desired.
\end{proof}

\begin{lemma}\label{lemma48} For all $a\in A$, $n,m<\omega$ and $b\unlhd^{n+m}a$, $m(a,b)$ exists.
\end{lemma}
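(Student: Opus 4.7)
The plan is to reduce the existence of $m(a,b)$ in $\mathbb A$ to the corresponding fact in $\mathbb P$, which is guaranteed by the $\Sigma$-Prikry property of $(\mathbb P,\lh_{\mathbb P},c_{\mathbb P})$, and then invoke clause~\eqref{frk4} of Definition~\ref{forking} to transport the witness back to $\mathbb A$ via $\pitchfork$.

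Fix $a\in A$, natural numbers $n,m$, and $b\unlhd^{n+m}a$. First I would verify that the input to the formula supplied by clause~\eqref{frk4} makes sense, i.e., that $m(\pi(a),\pi(b))$ is well-defined. Since $\pi$ is a projection from $\mathbb A$ onto $\mathbb P$ (Definition~\ref{forking}\eqref{frk1}), $b\unlhd a$ implies $\pi(b)\le\pi(a)$. Combined with $\lh_{\mathbb A}=\lh_{\mathbb P}\circ\pi$, this yields
\[
\lh_{\mathbb P}(\pi(b))=\lh_{\mathbb A}(b)=\lh_{\mathbb A}(a)+n+m=\lh_{\mathbb P}(\pi(a))+n+m,
\]
so $\pi(b)\le^{n+m}\pi(a)$. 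Since $(\mathbb P,\lh_{\mathbb P},c_{\mathbb P})$ is $\Sigma$-Prikry, clause~\eqref{c5} of Definition~\ref{SigmaPrikry} produces the greatest element $r^*:=m(\pi(a),\pi(b))$ of the set $\{r\le^n\pi(a)\mid\pi(b)\le^m r\}$.

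At this point, clause~\eqref{frk4} of Definition~\ref{forking} applies and directly asserts that $m(a,b)$ exists and satisfies $m(a,b)=\fork{}{a}(r^*)$, completing the argument. The only obstacle is the bookkeeping above: clause~\eqref{frk4} is phrased so as to presuppose that the term $\fork{}{a}(m(\pi(a),\pi(b)))$ on its right-hand side is meaningful, and what requires checking is precisely that the $\Sigma$-Prikry structure on $\mathbb P$ together with $\lh_{\mathbb A}=\lh_{\mathbb P}\circ\pi$ guarantees this. No further clauses of Definition~\ref{forking} are needed for the bare existence statement, although in subsequent lemmas one will typically invoke Lemma~\ref{forkingfacts}(2) and clause~\eqref{frk5} to reason about $\fork{}{a}(r^*)$ as an element of $A_n^a$.
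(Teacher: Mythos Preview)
Your proposal is correct and takes essentially the same approach as the paper: both simply invoke clause~\eqref{frk4} of Definition~\ref{forking}, which directly asserts the existence of $m(a,b)$. Your additional verification that $m(\pi(a),\pi(b))$ is well-defined in $\mathbb P$ is a reasonable coherence check, but the paper treats this as implicit in the hypothesis that $(\pitchfork,\pi)$ is a forking projection and disposes of the lemma in one line.
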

\begin{proof} This is covered by Definition~\ref{forking}\eqref{frk4}.
\end{proof} 

\begin{lemma}\label{lemma49}
For all $a\in A$,  $|W(a)|<\mu$.
\end{lemma}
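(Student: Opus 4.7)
The plan is to exploit the bijection provided by Lemma~\ref{forkingfacts}(1) to transfer the size bound from $W(\pi(a))$, which lives in the already-$\Sigma$-Prikry world $\mathbb{P}$, up to $W(a)$.

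More precisely, fix $a \in A$. By Lemma~\ref{forkingfacts}(1), the restriction $\fork{}{a} \restriction W(\pi(a))$ is a bijection onto $W(a)$, so $|W(a)| = |W(\pi(a))|$. Since $(\mathbb{P},\lh_{\mathbb P},c_{\mathbb P})$ is $\Sigma$-Prikry, Definition~\ref{SigmaPrikry}\eqref{csize} applied to $\pi(a) \in P$ gives $|W(\pi(a))| < \mu$, which yields the desired bound.

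There is essentially no obstacle here, since the ingredients have already been assembled: Clauses \eqref{frk4} and \eqref{frk5} of Definition~\ref{forking} (used in Lemma~\ref{forkingfacts}(1)) are precisely what ensures that the $a$-tree in $\mathbb A$ is a faithful copy of the $\pi(a)$-tree in $\mathbb P$, and Clause~\eqref{csize} of Definition~\ref{SigmaPrikry} bounds the latter. The argument is therefore a one-line invocation, and there is no need to use Clauses \eqref{frk2}, \eqref{frk6}, or \eqref{frk7} of Definition~\ref{forking} for this particular lemma.
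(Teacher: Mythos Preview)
Your proof is correct and follows essentially the same approach as the paper: invoke the bijection of Lemma~\ref{forkingfacts}(1) to identify $W(a)$ with $W(\pi(a))$, and then apply Definition~\ref{SigmaPrikry}\eqref{csize} for $(\mathbb P,\lh_{\mathbb P},c_{\mathbb P})$.
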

\begin{proof}  This follows from Lemma~\ref{forkingfacts}(1) and Definition~\ref{SigmaPrikry}\eqref{csize} for $(\mathbb{P},\ell_{\mathbb P},c_{\mathbb P})$.
\end{proof}

\begin{lemma}\label{lemma410}
For all $a'\le a$ in $A$, $b\mapsto w(a,b)$ forms an order-preserving map from $W(a')$ to $W(a)$.
\end{lemma}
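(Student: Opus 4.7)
The plan is to lift the corresponding property in $\mathbb{P}$ (Definition~\ref{SigmaPrikry}\eqref{itsaprojection}) to $\mathbb{A}$ by bouncing through $\pi$ and $\pitchfork_a$, using Definition~\ref{forking}\eqref{frk4} to express $w(a,b)$ in closed form.

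First I would observe that for any $b \unlhd a$, writing $k := \lh_{\mathbb{A}}(b) - \lh_{\mathbb{A}}(a)$ we have $b \unlhd^{k} a$, so applying Definition~\ref{forking}\eqref{frk4} with $n=0,m=k$ yields that $w(a,b)$ exists and satisfies
\[
w(a,b) = \fork{}{a}\bigl(w(\pi(a),\pi(b))\bigr).
\]
In particular, given $b \in W(a')$, Lemma~\ref{forkingfacts}(1) gives $b = \fork{}{a'}(q)$ for some $q \in W(\pi(a'))$, and then Definition~\ref{forking}\eqref{frk5} gives $\pi(b) = q \in W(\pi(a'))$. Since $\pi(a')\le \pi(a)$ (because $\pi$ is a projection), we have $w(\pi(a),\pi(b)) \in W(\pi(a))$, so another application of Lemma~\ref{forkingfacts}(1) yields $w(a,b) \in W(a)$. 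Thus $b \mapsto w(a,b)$ is a well-defined map from $W(a')$ to $W(a)$.

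For the order-preservation, let $b_0, b_1 \in W(a')$ with $b_0 \unlhd b_1$. Applying $\pi$ (which preserves order) gives $\pi(b_0) \le \pi(b_1)$, and by the previous paragraph both $\pi(b_0), \pi(b_1)$ belong to $W(\pi(a'))$. Since $(\mathbb{P},\lh_{\mathbb P},c_{\mathbb P})$ is $\Sigma$-Prikry, Definition~\ref{SigmaPrikry}\eqref{itsaprojection} for the pair $\pi(a') \le \pi(a)$ in $\mathbb{P}$ yields
\[
w(\pi(a),\pi(b_0)) \le w(\pi(a),\pi(b_1)).
\]
Finally, invoking Definition~\ref{forking}\eqref{frk0}, which says that $\pitchfork_a$ is order-preserving from $(\cone{\pi(a)},\le)$ to $(\conea{a},\unlhd)$, we conclude
\[
w(a,b_0) = \fork{}{a}\bigl(w(\pi(a),\pi(b_0))\bigr) \unlhd \fork{}{a}\bigl(w(\pi(a),\pi(b_1))\bigr) = w(a,b_1),
\]
as required.

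There is no real obstacle here; the argument is a direct diagram chase through the axioms of a forking projection, and the only slightly subtle point is noticing that Definition~\ref{forking}\eqref{frk4} with $m=0$ reduces to the explicit formula $w(a,b) = \fork{}{a}(w(\pi(a),\pi(b)))$, which is what reduces the whole statement to the analogous fact already assumed for $\mathbb{P}$.
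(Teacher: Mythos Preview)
Your proof is correct and follows essentially the same route as the paper: reduce to $\mathbb P$ via the identity $w(a,b)=\fork{}{a}(w(\pi(a),\pi(b)))$ from Definition~\ref{forking}\eqref{frk4}, apply Definition~\ref{SigmaPrikry}\eqref{itsaprojection} there, and lift back using that $\fork{}{a}$ is order-preserving. One small slip: when you invoke Definition~\ref{forking}\eqref{frk4} you should take $n=k$ and $m=0$ (not $n=0,m=k$), since $w(a,b)=0(a,b)$ corresponds to the case $m=0$; your stated conclusion is nonetheless the correct one.
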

\begin{proof} Fix an arbitrary pair $b'\unlhd b$ in $W(a')$, and let us show that $w(a,b')\unlhd w(a,b)$.
By Definition~\ref{forking}\eqref{frk4} with $m=0$, $w(a,b')=\fork{}{a}(w(\pi(a),\pi(b')))$ and $w(a,b)=\fork{}{a}(w(\pi(a),\pi(b)))$.
On the other hand, $\pi$ is a projection, in particular order-preserving, hence $\pi(b')\le \pi(b)$, and also both such conditions extend $\pi(a)$.
By Definition~\ref{SigmaPrikry}\eqref{itsaprojection} for $(\mathbb{P},\ell_{\mathbb P},c_{\mathbb P})$, $w(\pi(a), \pi(b'))\le w(\pi(a),\pi(b))$,
and thus, appealing to Definition~\ref{forking}\eqref{frk7}, it follows that
$$\fork{}{a}(w(\pi(a),\pi(b')))\unlhd \fork{}{a}(w(\pi(a),\pi(b))),$$ 
which yields the desired result.
\end{proof}

\begin{definition} The forking projection $(\pitchfork,\pi)$ is said to have the \emph{mixing property} 
iff for all $a\in A$, $n<\omega$, $q\le^0\pi(a)$, 
and a function $g:W_n(q)\rightarrow\conea{a}$ such that $\pi\circ g$ is the identity map,\footnote{Equivalently,
a function $g:W_n(q)\rightarrow A$ such that $g(r)\unlhd a$ and $\pi(g(r))=r$ for every $r\in W_n(q)$.}
there exists $b\unlhd^0 a$ with $\pi(b)=q$ such that $\fork{}{b}(r)\unlhd^0 g(r)$ for every $r\in W_n(q)$.
\end{definition}

\begin{lemma}\label{corollary320} Suppose that $(\pitchfork,\pi)$ has the mixing property.
Let $U\subseteq A$ be a $0$-open set. Then, for all $a\in A$ and $n<\omega$, there is $b\unlhd^0 a$ such that, either $A^{b}_n\cap U=\emptyset$ or $A^{b}_n\subseteq U$.
\end{lemma}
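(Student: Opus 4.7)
The plan is to reduce the statement to the Complete Prikry Property of $\mathbb{P}$ (Definition~\ref{SigmaPrikry}\eqref{c6}) by lifting the $0$-open set $U\s A$ to a $0$-open set $V\s P$, applying the CPP to $V$, and then transporting back to $\mathbb{A}$ via the mixing property. The key is to choose $V$ generously enough that the ``$A_n^b\cap U=\emptyset$'' alternative of the conclusion falls out automatically. Concretely, I would set
\[
V:=\{r\in P : r\le\pi(a)\text{ implies }\exists a^*\unlhd a\,(\pi(a^*)=r\text{ and }a^*\in U)\}.
\]

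First I would verify that $V$ is $0$-open. Since $r\in P_0^r$ trivially, it suffices to check downward $0$-closure. If $r\in V$ is witnessed by some $a^*\unlhd a$ with $\pi(a^*)=r$ and $a^*\in U$, and $r'\le^0 r$, then Lemma~\ref{forkingfacts}(2) provides $\fork{}{a^*}(r')\unlhd^0 a^*$; the $0$-openness of $U$ then gives $\fork{}{a^*}(r')\in U$, and since $\fork{}{a^*}(r')\unlhd a$ has projection $r'$, this witnesses $r'\in V$. The case $r\not\le\pi(a)$ is immediate.

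Next, Definition~\ref{SigmaPrikry}\eqref{c6} applied to $V$, $\pi(a)$, and $n$ delivers some $q\le^0\pi(a)$ with either $P_n^q\cap V=\emptyset$ or $P_n^q\s V$. In the former case I would set $b:=\fork{}{a}(q)$, which is a $0$-step extension of $a$ with $\pi(b)=q$. For any $b'\in A_n^b$ we have $b'\unlhd a$ and $\pi(b')\in P_n^q$, so $\pi(b')\notin V$; the very definition of $V$ then says no extension of $a$ projecting to $\pi(b')$ belongs to $U$, whence $b'\notin U$ and $A_n^b\cap U=\emptyset$.

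In the latter case $P_n^q\s V$, for each $r\in W_n(q)$ I would select some witness $g(r)\unlhd a$ with $\pi(g(r))=r$ and $g(r)\in U$. Since $\pi\circ g$ is the identity on $W_n(q)$, the mixing property yields $b\unlhd^0 a$ with $\pi(b)=q$ and $\fork{}{b}(r)\unlhd^0 g(r)$ for every $r\in W_n(q)$, and the $0$-openness of $U$ propagates $g(r)\in U$ down to $\fork{}{b}(r)\in U$. Now for an arbitrary $b'\in A_n^b$, Clause~\eqref{frk4} of Definition~\ref{forking} (with $m=0$) gives $w(b,b')=\fork{}{b}(w(\pi(b),\pi(b')))=\fork{}{b}(w(q,\pi(b')))$; since $w(q,\pi(b'))\in W_n(q)$, this element of $A_n^b$ lies in $U$, and a final appeal to $b'\unlhd^0 w(b,b')$ together with the $0$-openness of $U$ delivers $b'\in U$. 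Thus $A_n^b\s U$. The main obstacle is identifying the right $V$: the naive choice $\{r:\fork{}{a}(r)\in U\}$ handles the ``$\s U$'' alternative via mixing but leaves a gap in the ``$\cap U=\emptyset$'' alternative, since $\fork{}{a}(\pi(b'))\notin U$ need not preclude some other extension of $a$ above $\pi(b')$ from landing in $U$; enlarging $V$ to existentially quantify over all such extensions closes this gap at no extra cost.
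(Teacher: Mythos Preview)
Your approach is essentially the paper's: restricted to $\cone{\pi(a)}$, your set $V$ coincides with the paper's $\bar U:=\pi[U\cap(\conea{a})]$, and both cases of the dichotomy are handled the same way (the paper packages the final step of Case~2 via Lemma~\ref{forkingfacts}(1), but your unpacked version is equivalent).

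There is, however, a small slip in your verification that $V$ is $0$-open. The claim ``the case $r\not\le\pi(a)$ is immediate'' is not justified: if $r\not\le\pi(a)$ but some $r'\le^0 r$ happens to satisfy $r'\le\pi(a)$ (which can occur --- e.g.\ in vanilla Prikry, take $r=(s,A)$, $\pi(a)=(s,B)$ with $A\not\subseteq B$, and $r'=(s,A\cap B)$), then membership of $r'$ in $V$ demands a witness $a^*$ that need not exist. The fix is trivial: drop the vacuous clause and set $V:=\{r\in P:\exists a^*\unlhd a\,(\pi(a^*)=r\text{ and }a^*\in U)\}$, which is exactly the paper's $\bar U$ and is genuinely $0$-open by the argument you already gave (this is Lemma~\ref{zer0pen}(1)). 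Everything else goes through unchanged.
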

\begin{proof}
Let $a\in A$ and $n<\omega$. Set $U_a:=U\cap (\conea{a})$, $\bar U:=\pi[U_a]$, and $p:=\pi(a)$.
By Lemma~\ref{zer0pen}(1), $\bar U$ is $0$-open.
Since $(\mathbb{P},\lh_{\mathbb P},c_{\mathbb P})$ is $\Sigma$-Prikry, we now appeal to Definition~\ref{SigmaPrikry}\eqref{c6} and find $q\le^0 p$ such that,
either $P^q_n\cap  \bar U=\emptyset$ or $P^q_n\subseteq \bar U$.

\begin{claim} If $P^{q}_n\cap\bar U=\emptyset$, then there exists $b\unlhd^0 a$ with $\pi(b)={q}$ such that $A^b_n\cap U=\emptyset$.
\end{claim}
\begin{proof}
Suppose that $P^{q}_n\cap\bar U=\emptyset$. Set $b:=\fork{}{a}({q})$, so that $b\unlhd a$ and $\pi(b)={q}$.
As $\lh_{\mathbb A}(b)=\lh_{\mathbb P}(q)=\lh_{\mathbb A}(a)$, we moreover have $b\unlhd^0 a$.
Finally, since  $d\in A^b_n\cap U\implies\pi(d)\in P^{q}_n\cap\bar U$, we infer that $A^b_n\cap U=\emptyset$.
\end{proof}
\begin{claim} If $P^{q}_n\s\bar U$, then there exists  $b\unlhd^0 a$ with $\pi(b)=q$ such that $A^b_n\s U$.
\end{claim}
\begin{proof} Suppose that $P^{q}_n\s\bar U$.
So, for every $r\in P^{q}_n$, we may pick $a_r\in U_a$ such that $\pi(a_r)=r$.
Define a function $g: W_n(q)\rightarrow U_a$ via $g(r):=a_r$.
By the mixing property, we now obtain a condition $b\unlhd^0 a$ such that $\fork{}{b}(r)\unlhd^0 g(r)$ for every $r\in W_n(q)$.
As $U$ is $0$-open, it follows that $\fork{}{b}`` W_n(q)\subseteq U$.
By Lemma~\ref{forkingfacts}(1), $W_n(b)=\fork{}{b}`` W_n(q)\s U$; hence, again by $0$-openess of $U$, $A^b_n\subseteq U$, as desired.
\end{proof}
This completes the proof.
\end{proof}

\begin{cor}\label{cor523} 
Suppose that Clauses \eqref{c2} and \eqref{c6} of Definition~\ref{SigmaPrikry} are valid for $(\mathbb A,\lh_{\mathbb A})$. 
If $\one_{\mathbb P}\Vdash_{\mathbb P}``\check\kappa\text{ is singular}"$, then $\one_{\mathbb A}\Vdash_{\mathbb{A}}\check{\mu}=\check\kappa^+$.
\end{cor}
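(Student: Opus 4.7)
The plan is to reduce matters to Lemma~\ref{l14}(2), applied to $(\mathbb A,\lh_{\mathbb A})$ at the regular cardinal $\nu:=\mu$, and to derive a contradiction from the upper bound $|W(a)|<\mu$ supplied by Lemma~\ref{lemma49}.

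I begin by assembling the background. As $\one_{\mathbb P}\Vdash_{\mathbb P}\check\mu=\check\kappa^+$, $\mu$ is a regular cardinal in $V^{\mathbb P}$, and hence in $V$ as well: any $V$-cofinal subset of $\mu$ remains cofinal in $V^{\mathbb P}$, where it must have size $\mu$, forcing its $V$-size to be $\mu$. Since $\pi$ projects $\mathbb A$ onto $\mathbb P$, the hypothesis $\one_{\mathbb P}\Vdash_{\mathbb P}``\check\kappa\text{ is singular}"$ lifts (by upward absoluteness of a cofinal $\omega$-sequence) to $\one_{\mathbb A}\Vdash_{\mathbb A}``\check\kappa\text{ is singular}"$. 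To license the use of Lemma~\ref{l14}(2) in the $\mathbb A$-context, I verify that $(\mathbb A,\lh_{\mathbb A})$ satisfies Clauses~(\ref{c4}), (\ref{c2}), (\ref{c5}) and~(\ref{c6}) of Definition~\ref{SigmaPrikry}: Clause~(\ref{c4}) is given by Lemma~\ref{C2ASigmaPrikry}, Clause~(\ref{c5}) by Lemma~\ref{lemma48}, while Clauses~(\ref{c2}) and~(\ref{c6}) are the standing hypotheses of the corollary.

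Now suppose towards a contradiction that some $a\in A$ forces $\check\mu\neq\check\kappa^+$. Any $\mathbb A$-generic extension $V[G]$ contains the intermediate $\mathbb P$-extension $V^{\mathbb P}$, in which $\mu=\kappa^+$, so no cardinal strictly separates $\kappa$ from $\mu$ in $V^{\mathbb P}$. Since being a cardinal is downward absolute, the $V[G]$-cardinals form a subset of the $V^{\mathbb P}$-cardinals, and hence the only possibility compatible with $\check\mu\neq\check\kappa^+$ in $V[G]$ is $|\mu|^{V[G]}\leq\kappa$. Because $\kappa$ is singular in $V[G]$, no ordinal of $V[G]$-cardinality at most $\kappa$ can have $V[G]$-cofinality equal to $\kappa$. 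Therefore $a\Vdash_{\mathbb A}\cf(\check\mu)<\check\kappa$, and Lemma~\ref{l14}(2) delivers some $a'\unlhd a$ with $|W(a')|\geq\mu$, contradicting Lemma~\ref{lemma49}.

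The only non-routine step is the passage from ``$\mu$ is not $\kappa^+$ in the $\mathbb A$-extension'' to the strict inequality ``$\cf^{V^{\mathbb A}}(\mu)<\kappa$''. The crucial point is that the intermediate model $V^{\mathbb P}$ pins the $V^{\mathbb A}$-cardinality of $\mu$ tightly to the value $\kappa$, while the singularity of $\kappa$ simultaneously blocks the cofinality from being exactly $\kappa$.
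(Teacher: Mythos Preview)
Your argument is correct and follows the same route as the paper: assume some $a$ forces $\check\mu\neq\check\kappa^+$, use the projection to $\mathbb P$ together with the singularity of $\kappa$ to get $a\Vdash_{\mathbb A}\cf(\check\mu)<\check\kappa$, then invoke Lemma~\ref{l14}(\ref{C2l14}) to produce $a'\unlhd a$ with $|W(a')|\geq\mu$, contradicting Lemma~\ref{lemma49}. Your explicit verification of Clauses~(\ref{c4}) and~(\ref{c5}) for $(\mathbb A,\lh_{\mathbb A})$ (via Lemmas~\ref{C2ASigmaPrikry} and~\ref{lemma48}) is a welcome addition, since the footnote to Lemma~\ref{l14}(\ref{C2l14}) requires exactly these together with the hypothesized Clauses~(\ref{c2}) and~(\ref{c6}).
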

\begin{proof} Suppose that $\one_{\mathbb A}\not\Vdash_{\mathbb{A}}\check{\mu}=\check\kappa^+$.
As $\one_{\mathbb P}\Vdash_{\mathbb{P}}\check{\mu}=\check\kappa^+$ and $\mathbb A$ projects to $\mathbb P$,
this means that there exists $a\in A$ such that $a\Vdash_{\mathbb{A}}|\check\mu|\le|\check\kappa|$.
Towards a contradiction, suppose that $\one_{\mathbb P}\Vdash_{\mathbb{P}}``\check\kappa\text{ is singular}"$. As $\mathbb A$ projects to $\mathbb P$,
it altogether follows that $a\Vdash_{\mathbb{A}}\cf(\check\mu)<\check\kappa$. By Lemma~\ref{l14}\eqref{C2l14}, then, there exists $a'\unlhd a$ with $|W(a')|\geq\mu$,
contradicting Lemma~\ref{lemma48}(2).
\end{proof}

\section{Simultaneous stationary reflection}\label{analysis}

\begin{definition}
For cardinals $\theta<\mu=\cf(\mu)$, and stationary subsets $S,T$ of $\mu$,
the principle $\refl({<}\theta,S,T)$ asserts that for every collection $\mathcal S$ of stationary subsets of $S$,
with $|\mathcal S|<\theta$ and $\sup(\{\cf(\alpha)\mid \alpha\in\bigcup\mathcal S\})<\sup(S)$,
the set $T\cap\bigcap_{S\in\mathcal S}\cap\tr(S)$ is non-empty.

We write $\refl({<}\theta,S)$ for $\refl({<}\theta,S,\mu)$ and $\refl(\theta, S)$ for $\refl({<}\theta^+, S)$.\footnote{Where, for $\theta$ finite, $\theta^+$ stands for $\theta+1$.}
\end{definition}
\begin{definition}[Shelah, {\cite[Definition~5.1, p.~85]{ShelahBook}}]
For infinite cardinals $\mu\geq\nu\geq\theta$, define
$$\cov(\mu, \nu, \theta, 2):=\min\{|\mathcal{A}|\mid \mathcal{A}\s [\mu]^{<\nu}\,\forall X\in [\mu]^{<\theta}\,\exists A\in \mathcal{A}(X\s A)\}.$$
\end{definition}

The following proposition is implicit in the work of Solovay on the Singular Cardinal Hypothesis (SCH).

\begin{prop}
Suppose $\refl({<}\theta,S,E^\mu_{<\nu})$ holds for a stationary $S\s \mu$ and some cardinal $\nu\in \mu$.
Then $\cov(\mu,\nu,\theta,2)=\mu$.
\end{prop}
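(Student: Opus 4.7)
The lower bound $\cov(\mu,\nu,\theta,2)\geq\mu$ is routine: any covering family $\mathcal{A}\s[\mu]^{<\nu}$ must cover each of the $\mu$ singletons of $\mu$, and since $|A|<\nu<\mu$ for every $A\in\mathcal{A}$, we must have $|\mathcal{A}|\geq\mu$. The substance of the proposition is the upper bound $\cov(\mu,\nu,\theta,2)\leq\mu$, which I would establish in Solovay style.

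I plan to build a family $\mathcal{A}\s[\mu]^{<\nu}$ of size $\mu$ covering $[\mu]^{<\theta}$. Since $\theta<\mu=\cf(\mu)$, any $X\in[\mu]^{<\theta}$ is bounded in $\mu$; thus it suffices to handle $X\s S$, the complementary piece $X\setminus S$ being covered by a separate standard family indexed over bounded initial segments of $\mu$. Fix, for each limit $\beta<\mu$, a cofinal set $c_\beta\s\beta$ of order type $\cf(\beta)$, and let
$$\mathcal{A}_0:=\{\cl(c_\beta)\mid \beta\in E^\mu_{<\nu}\}.$$
Each member of $\mathcal{A}_0$ has cardinality at most $\cf(\beta)+1<\nu$, and $|\mathcal{A}_0|\leq\mu$.

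Given $X\in[S]^{<\theta}$, I would use $\refl({<}\theta,S,E^\mu_{<\nu})$ to locate a single $\beta^*\in E^\mu_{<\nu}$ with $X\s\cl(c_{\beta^*})$. For each $\alpha\in X$ define
$$T_\alpha:=\{\beta\in S\mid \alpha\in\acc^+(c_\beta)\};$$
with $\langle c_\beta\rangle$ chosen to enjoy a club-guessing property, each $T_\alpha$ is stationary in $\mu$. Since $|X|<\theta<\mu$, pigeonhole produces a fixed $\lambda<\mu$ for which $T'_\alpha:=T_\alpha\cap E^\mu_{<\lambda}$ remains stationary for every $\alpha\in X$, so that $\sup\{\cf(\beta)\mid \beta\in\bigcup_{\alpha\in X}T'_\alpha\}\leq\lambda<\mu$ and the cofinality hypothesis of the reflection principle is met. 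Applying $\refl({<}\theta,S,E^\mu_{<\nu})$ to $\mathcal{S}:=\{T'_\alpha\mid \alpha\in X\}$ then yields $\beta^*\in E^\mu_{<\nu}$ at which every $T'_\alpha$ reflects; coherence of $\langle c_\beta\rangle$ upgrades ``$\alpha\in\acc^+(c_\beta)$ for stationarily many $\beta<\beta^*$'' to ``$\alpha\in\cl(c_{\beta^*})$'', completing the cover by $\cl(c_{\beta^*})\in\mathcal{A}_0$.

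The main obstacle will be ensuring that the chosen sequence $\langle c_\beta\rangle$ has enough coherence that stationary reflection of $T_\alpha$ at $\beta^*$ genuinely forces $\alpha\in\cl(c_{\beta^*})$; this is typically supplied by a $\square$-like or pcf-theoretic device available in $\zfc$ in the settings of interest, or by a club-guessing sequence with the appropriate accumulation property. The secondary verifications---stationarity of each $T_\alpha$, and the refinement producing bounded cofinalities in $\bigcup\mathcal{S}$---are standard Fodor-type and pigeonhole arguments.
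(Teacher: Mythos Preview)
Your proposal has a genuine gap and is not the Solovay argument the paper alludes to. The paper's proof is much shorter and avoids all of the difficulties you flag: using Solovay's partition theorem, split $S$ into $\mu$ many pairwise disjoint stationary sets $\langle S_i\mid i<\mu\rangle$; for each $\alpha\in E^\mu_{<\nu}$ of uncountable cofinality set $A_\alpha:=\{i<\mu\mid S_i\cap\alpha\text{ is stationary}\}$. A club in $\alpha$ of order type $\cf(\alpha)<\nu$ can meet at most $\cf(\alpha)$ of the disjoint $S_i$'s, so $|A_\alpha|<\nu$. Given $X\in[\mu]^{<\theta}$, apply $\refl({<}\theta,S,E^\mu_{<\nu})$ to the family $\{S_i\mid i\in X\}$ (which is a family of stationary subsets of $S$) to obtain a simultaneous reflection point $\alpha$; then $X\subseteq A_\alpha$. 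No splitting of $X$, no club-guessing, no coherence.

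Your argument, by contrast, breaks at several points. First, the reduction ``it suffices to handle $X\subseteq S$'' is illusory: there is no ``separate standard family'' covering $[\mu\setminus S]^{<\theta}$ by $\mu$ sets of size ${<}\nu$ --- that is exactly the statement you are trying to prove, and $\mu\setminus S$ carries no useful structure. Second, even restricting to $X\subseteq S$, the stationarity of your sets $T_\alpha=\{\beta\in S\mid\alpha\in\acc^+(c_\beta)\}$ does not follow from any standard club-guessing principle; club guessing yields ``for every club $C$ there are stationarily many $\beta$ with $c_\beta\subseteq C$'', which says nothing about a \emph{fixed} ordinal $\alpha$ being accumulated. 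Third, and most seriously, the coherence you invoke at the end (``stationarily many $\beta<\beta^*$ with $\alpha\in\acc^+(c_\beta)$ implies $\alpha\in\cl(c_{\beta^*})$'') is a square-like property, and square on $S$ is provably incompatible with the very reflection hypothesis $\refl({<}\theta,S,E^\mu_{<\nu})$ you are assuming. So the device you need is unavailable precisely in the models where the proposition applies.
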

\begin{proof} Let $\langle S_i\mid i<\mu\rangle$ be a partition of $S$ into mutually disjoint stationary sets.
Put $T:=\{\alpha<\mu\mid \omega<\cf(\alpha)<\nu\}$.
Set $\mathcal A:=\{ A_\alpha\mid \alpha\in T\}$,
where for each $\alpha\in T$, $A_\alpha:=\{ i<\mu\mid S_i\cap\alpha\text{ is stationary}\}$.
Since each $\alpha\in T$ admits a club $C_\alpha$ of order-type ${<}\nu$, and $C_\alpha\cap S_i\neq\emptyset$ for all $i\in A_\alpha$,
while $S_i\cap S_j=\emptyset$ for all $i<j<\mu$,
we get that $\mathcal A\s [\mu]^{<\nu}$.

By $\refl({<}\theta,S,E^\mu_{<\nu})$, for every $X\in[\mu]^{<\theta}$,
there must exist some $A\in\mathcal A$ such that $X\s A$. Altogether, $\mathcal A$ witnesses that $\cov(\mu,\nu,\theta,2)=\mu$.
\end{proof}

Note that for every singular strong limit $\kappa$,  $\cov(\kappa^+,\kappa,(\cf(\kappa))^+,2)=2^\kappa$.
In particular:

\begin{cor}\label{prop42}  If $\kappa$ is a singular strong limit cardinal admitting a stationary subset $S\s\kappa^+$ for which $\refl(\cf(\kappa),S)$ holds,
then  $2^\kappa=\kappa^+$.\qed
\end{cor}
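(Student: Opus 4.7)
The proof should be a short chain combining the preceding proposition with the identity in the note. Set $\mu := \kappa^+$, $\nu := \kappa$ and $\theta := \cf(\kappa)^+$, and unpack the hypothesis $\refl(\cf(\kappa), S)$, which by the convention in the definition of $\refl$ means exactly $\refl({<}\cf(\kappa)^+, S, \kappa^+)$. My first task will be to upgrade this to $\refl({<}\cf(\kappa)^+, S, E^{\kappa^+}_{<\kappa})$, i.e.\ to arrange that the reflection point can be taken in $E^{\kappa^+}_{<\kappa}$. But this is automatic: any $\delta\in\tr(S')$ satisfies $\delta<\kappa^+$, hence $\cf(\delta)\le\kappa$; since $\cf(\delta)$ is regular and $\kappa$ is singular, we must have $\cf(\delta)<\kappa$, so in fact $\tr(S')\subseteq E^{\kappa^+}_{<\kappa}$ for every stationary $S'\subseteq S$.

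Next I would feed this into the preceding proposition with the parameters above. The hypothesis of that proposition is precisely $\refl({<}\theta, S, E^\mu_{<\nu})$ with $\mu=\kappa^+$, $\nu=\kappa$, $\theta=\cf(\kappa)^+$, so its conclusion gives $\cov(\kappa^+, \kappa, \cf(\kappa)^+, 2) = \kappa^+$. Finally, I invoke the identity mentioned immediately before the corollary, namely that for every singular strong limit $\kappa$, $\cov(\kappa^+, \kappa, \cf(\kappa)^+, 2) = 2^\kappa$. Chaining the two equalities yields $2^\kappa = \kappa^+$, as required.

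There is no real obstacle here, since both ingredients are already in hand; the only thing one must be careful about is the small cofinality bookkeeping in the first step (that reflection to $E^{\kappa^+}_{<\kappa}$ comes for free from reflection to $\kappa^+$ when $\kappa$ is singular), and the invocation of the standard Solovay/Shelah identity $2^\kappa = \cov(\kappa^+, \kappa, \cf(\kappa)^+, 2)$ for singular strong limit $\kappa$, which the paper cites as a known fact.
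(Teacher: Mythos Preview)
Your proposal is correct and matches the paper's intended argument exactly: the corollary is stated with a bare \qed, deriving immediately from the preceding proposition together with the identity $\cov(\kappa^+,\kappa,(\cf(\kappa))^+,2)=2^\kappa$ noted just before it. Your observation that reflection into $\kappa^+$ automatically lands in $E^{\kappa^+}_{<\kappa}$ when $\kappa$ is singular is precisely the missing bridge the paper leaves to the reader.
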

Throughout the rest of this section, suppose that $(\mathbb P,\lh,c)$ is a given $\Sigma$-Prikry notion of forcing.
Denote $\mathbb{P}=(P,\le)$ and $\Sigma=\langle \kappa_n\mid n<\omega\rangle$. Also, define $\kappa$ and $\mu$ as in Definition~\ref{SigmaPrikry}.
Our universe of sets is denoted by $V$, and we
write $\Gamma:=\{\alpha<\mu\mid \omega<\cf^V(\alpha)<\kappa\}$.\footnote{All findings of the analysis in this section goes through if we replace $\mu$ by a regular cardinal $\nu\geq\mu$
and replace $\Gamma$ by $\{\alpha<\nu\mid \omega<\cf^V(\alpha)<\kappa\}$.}

\begin{lemma}\label{keylemma}
Suppose that $r^*\in P$ and that $\tau$ is a $\mathbb P$-name.
For all $n<\omega$, write $\dot{T}_n:=\{(\check\alpha,p)\mid (\alpha,p)\in\mu\times P_n\ \&\ p\Vdash_{\mathbb P}\check\alpha\in\tau\}$.
Then one of the following holds:
\begin{enumerate}
\item $D:=\{p\in P\mid (\forall q\le p)~q\Vdash_{\mathbb{P}_{\lh(q)}}``\dot{T}_{\lh(q)}\text{ is stationary}"\}$ is open and dense below $r^*$;\footnote{Recall that we identify each of the $\mathbb P_n$'s with its separative quotient.}
\item There exist $r^\star\le r^*$ and $I\in[\omega]^\omega$ such that,
for all $q\le r^\star$ with $\lh(q)\in I$,
$$q\Vdash_{\mathbb{P}_{\lh(q)}}``\dot{T}_{\lh(q)}\text{ is nonstationary}".$$
\end{enumerate}
\end{lemma}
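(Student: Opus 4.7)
The plan is a direct Complete Prikry Property argument. First I would introduce the auxiliary set
\[
U := \{p \in P \mid p \Vdash_{\mathbb{P}_{\lh(p)}} ``\dot{T}_{\lh(p)} \text{ is nonstationary}''\}
\]
and record two basic facts: $U$ is $0$-open (if $p'\le^0 p$ then $p'\in P_{\lh(p)}$ with $p'\le p$ in $\mathbb{P}_{\lh(p)}$, so $p'$ inherits the forcing of nonstationarity of $\dot{T}_{\lh(p)}$), and $D$ is open (immediate from its definition).

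Next, assume Clause~(1) fails and pick $p_0\le r^*$ with $\cone{p_0}\cap D=\emptyset$. I would then argue that $U$ is dense below $p_0$: given $p\le p_0$, since $p\notin D$ there is $q\le p$ that does not $\mathbb{P}_{\lh(q)}$-force ``$\dot{T}_{\lh(q)}$ is stationary'', and unwinding this yields some $q'\in P_{\lh(q)}$ with $q'\le^0 q$ that $\mathbb{P}_{\lh(q)}$-forces nonstationarity, placing $q'$ in $U$ and below $p$.

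Next I would iteratively apply the CPP (Definition~\ref{SigmaPrikry}(\ref{c6})) together with Clause~(\ref{c2}) to construct a $\le^0$-decreasing chain $\langle q_k \mid k<\omega\rangle$ in $\mathbb{P}_{\lh(p_0)}$ with $q_0:=p_0$ and with $q_{k+1}\le^0 q_k$ resolving, for the integer $k$, the dichotomy ``$P_k^{q_{k+1}} \subseteq U$ or $P_k^{q_{k+1}}\cap U = \emptyset$''. Since $\kappa_{\lh(p_0)}>\omega$, the chain admits a lower bound $r^\star\le^0 p_0$ in $\mathbb{P}_{\lh(p_0)}$, and because $P_k^{r^\star}\subseteq P_k^{q_{k+1}}$ the very same dichotomy is inherited by $r^\star$ at each level $k$. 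Set
\[
I := \{n\geq \lh(p_0) \mid P_{n-\lh(p_0)}^{r^\star} \subseteq U\}.
\]
Any $q\le r^\star$ with $\lh(q)\in I$ then lies in $U$, giving exactly the forcing statement of Clause~(2). To finish I would show $I\in[\omega]^\omega$: given $N<\omega$, use graded-ness to pick $q\le r^\star$ of length $\lh(p_0)+N$, then use that $U$ is dense below $r^\star$ (inherited from density below $p_0$) to pick $s\le q$ in $U$; the dichotomy at level $\lh(s)-\lh(p_0)$ cannot then be the ``empty'' option, forcing $\lh(s)\in I$ with $\lh(s)\geq \lh(p_0)+N$.

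The main obstacle is the back-and-forth between the original poset $\mathbb{P}_n$ and its separative quotient when unpacking ``$q$ does not $\mathbb{P}_{\lh(q)}$-force stationary'' into an honest $\le^0$-extension $q'$ of $q$ that $\mathbb{P}_{\lh(q)}$-forces nonstationary; beyond that, the argument is a straightforward bookkeeping of lengths, ensuring that the CPP application at stage $k$ stays in $\mathbb{P}_{\lh(p_0)}$ so that a lower bound exists.
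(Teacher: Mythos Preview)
Your proposal is correct and follows essentially the same approach as the paper: both introduce the $0$-open set of conditions forcing nonstationarity (your $U$, the paper's $E$), argue it is dense below some $p_0\le r^*$ witnessing the failure of Clause~(1), and then use the CPP to find $r^\star\le^0 p_0$ whose cone is homogeneous level-by-level with respect to membership in this set. The only presentational difference is that the paper invokes the packaged indiscernibility Lemma~\ref{RamseyPrikry} (applied to the characteristic function of $E$) to obtain $r^\star$ in one stroke, whereas you unfold that lemma's proof inline by iterating the CPP at each level $k$ and taking a $\le^0$-lower bound; the verification that $I$ is infinite is then identical in spirit.
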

\begin{proof}
$D$ is clearly open. Suppose that $D$ is not dense below $r^*$.
Then, we may pick some condition $p^*\le r^*$ such that, for all $p\le p^*$, there is $q\le p$, such that $q\not\Vdash_{\mathbb{P}_{\lh(q)}}``\dot{T}_{\lh(q)}\text{ is stationary}"$, i.e.,
there exists $q'\le q$ in $\mathbb P_{\lh(q)}$ such that $q'\Vdash_{\mathbb{P}_{\lh(q)}}``T_{\lh(q)}\text{ is nonstationary}"$.
Hence, for all $p\le p^*$, there is $q'\le  p$, such that $q'\Vdash_{\mathbb{P}_{\lh(q)}}``\dot{T}_{\lh(q)}\text{ is nonstationary}"$.
In other words, the $0$-open set $E:=\{q\in\mathbb P\mid q\Vdash_{\mathbb{P}_{\lh(q)}}``\dot{T}_{\lh(q)}\text{ is nonstationary}"\}$ is dense below $p^*$.

Now, define a $0$-open coloring  $d: P\rightarrow 2$ via $d(q):=1$ iff $q\in E$.
By virtue of Lemma~\ref{RamseyPrikry}, find $r^\star\le^0 p^*$ such that $\cone{r^\star}$  is a set of indiscernibles for $d$.
Note that as $E$ is dense below $r^\star$, Clause~(\ref{c4}) of Definition~\ref{SigmaPrikry}
entails that the set $I:=\{\lh(q')\mid q'\le r^\star\ \&\ q'\in E\}$ must be infinite.
Finally, as $\cone{r^\star}$  is a set of indiscernibles for $d$,
for all $q\le r^\star$ with $\lh(q)\in I$, we indeed have $q\in E$.
\end{proof}

\begin{lemma}\label{lemma9} Suppose that $r^\star\in P$, $I\in[\omega]^\omega$,
and $\langle \dot{C}_n\mid n\in I\rangle$ is a sequence such that,
for all $q\le r^\star$ with $\lh(q)\in I$, we have:
$$q\Vdash_{\mathbb{P}_{\lh(q)}}``\dot{C}_{\lh(q)}\text{ is a club in }\check\mu".$$

Consider the $\mathbb P$-name $\dot Y:=\{(\check \alpha,q)\mid (\alpha,q)\in R\}$, where\label{definitionOFr}
$$R:=\{(\alpha,q)\in\mu\times P\mid q\le r^\star\ \&\ \forall r\le q[\lh(r)\in I\rightarrow r\Vdash_{\mathbb{P}_{\lh(r)}}\check\alpha\in \dot{C}_{\lh(r)}]\}.$$

Suppose $G$ is $\mathbb P$-generic over $V$, with $r^\star\in G$. Let $Y$ be the interpretation
of $\dot Y$ in $V[G]$. Then:
\begin{enumerate}
\item $V[G]\models Y\text{ is unbounded in }\mu$;
\item $V[G]\models \acc^+(Y)\cap\Gamma\s Y$.
\end{enumerate}
\end{lemma}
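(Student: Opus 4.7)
The plan is to establish both clauses by a density argument together with genericity. It is convenient to first reformulate $(\alpha,q)\in R$: using Lemma~\ref{lemma7}(1) and the fact that for $r,r'\in P_n$, $r\le r'$ in $\mathbb{P}$ coincides with $r\le r'$ in the separative quotient of $\mathbb{P}_n$, the relation $(\alpha,q)\in R$ is equivalent to $q\le r^\star$ together with the property that for every $n\in I$ with $n\ge\lh(q)$ and every $r\in W_{n-\lh(q)}(q)$, one has $r\Vdash_{\mathbb{P}_n}\check\alpha\in\dot C_n$.

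For Clause~(1), given $\beta<\mu$ and $p\le r^\star$, I construct a $\le^0$-extension $q\le^0 p$ and $\alpha>\beta$ with $(\alpha,q)\in R$. Enumerate $I\cap[\lh(p),\omega)=\{n_j\mid j<\omega\}$ and build a $\le^0$-decreasing sequence $\langle q_k\mid k<\omega\rangle$ with $q_0:=p$, alongside ordinals $\beta<\gamma_0<\gamma_1<\cdots$, maintaining the invariant at stage $k$: for every $j\le k$ and every $r\in W_{n_j-\lh(p)}(q_{k+1})$, $r\Vdash_{\mathbb{P}_{n_j}}\{\check\gamma_0,\ldots,\check\gamma_k\}\subseteq\dot C_{n_j}$. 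Preservation of the invariant for $\gamma_0,\ldots,\gamma_{k-1}$ uses Lemma~\ref{l15}: each $r'\in W_{n_j-\lh(p)}(q_{k+1})$ extends its projection $w(q_k,r')$ in $\mathbb{P}_{n_j}$. To secure $\check\gamma_k\in\dot C_{n_j}$ at every relevant $r$, I run an inner $\omega$-step recursion using the $\kappa_{n_j}$-directed-closure of each $\mathbb{P}_{n_j}$ and the forced club-closure of each $\dot C_{n_j}$: for each pair $(j,r)$ independently, construct a $\le$-decreasing $\omega$-sequence of extensions in $\mathbb{P}_{n_j}$ pinning down successive ordinals in $\dot C_{n_j}$, synchronized across all pairs to a common limit $\gamma_k$; then club-closure of $\dot C_{n_j}$ forces $\check\gamma_k\in\dot C_{n_j}$ at a bottom extension. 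Finally, take $q\le^0 p$ a lower bound of $\langle q_k\mid k<\omega\rangle$ via Definition~\ref{SigmaPrikry}\eqref{c2}, and set $\alpha:=\sup_k\gamma_k<\mu$; closure of each $\dot C_n$ upgrades ``$\gamma_k\in\dot C_n$ for all $k$'' to ``$\alpha\in\dot C_n$'', whence $(\alpha,q)\in R$.

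For Clause~(2), let $\alpha\in\acc^+(Y)\cap\Gamma$ and set $\theta:=\cf^V(\alpha)\in(\omega,\kappa)$. I will show that whenever $p\le r^\star$ forces $\check\alpha\in\acc^+(\dot Y)$ and $p'\le p$, there is $q\le p'$ with $(\alpha,q)\in R$. First strengthen $p'$ so that $\kappa_{\lh(p')}>\theta$, which is possible because $\kappa_n\to\kappa$. Fix $\langle\beta_\xi\mid\xi<\theta\rangle\in V$ cofinal in $\alpha$ and, using $\kappa_{\lh(p')}$-directed-closure of $\mathbb{P}_{\lh(p')}$ at limits, build a $\le^0$-decreasing sequence $\langle p_\xi\mid\xi\le\theta\rangle$ in $P_{\lh(p')}$ together with ordinals $\gamma_\xi\in(\beta_\xi,\alpha)$ so that the invariant from~(1) holds with $\gamma_\xi$ replacing $\gamma_k$. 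The successor stage runs the same inner construction as in~(1), with the extra requirement $\gamma_\xi<\alpha$ obtained from $p_\xi\Vdash\dot Y\cap(\beta_\xi,\alpha)\ne\emptyset$. At the limit $\theta$, $\alpha=\sup_\xi\gamma_\xi$ and the club-closure of each $\dot C_n$ again yields $(\alpha,p_\theta)\in R$, so $q:=p_\theta$ works.

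The main obstacle, shared by both parts, is the \emph{gluing step} within the inner construction: assembling a family of per-$(j,r)$ extensions---each residing in its individual forcing $\mathbb{P}_{n_j}$---into a single $\le^0$-extension of $\mathbb{P}$ whose $p$-tree simultaneously refines them on the relevant antichains. I expect this to be handled by iterated applications of the Complete Prikry Property (Definition~\ref{SigmaPrikry}\eqref{c6}) to a carefully chosen sequence of $0$-open subsets of $\mathbb{P}$ encoding the desired refinement of the tree structure.
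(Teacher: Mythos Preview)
Your overall strategy and your reformulation of $R$ are reasonable, but the gluing obstacle you identify is more serious than you suggest, and CPP does not resolve it in the way you propose. After your inner recursion you have, for each $r\in W_{n_j-\lh(p)}(q_k)$, an extension $r^{(\omega)}\le^0 r$ in $\mathbb P_{n_j}$ forcing $\gamma_k\in\dot C_{n_j}$; to glue, you would need $q_{k+1}\le^0 q_k$ so that every $s\in W_{n_j-\lh(p)}(q_{k+1})$ lies below the corresponding $r^{(\omega)}$. Applying CPP to the $0$-open set $U:=\{s:\lh(s)\in I\Rightarrow s\Vdash_{\mathbb P_{\lh(s)}}\gamma_k\in\dot C_{\lh(s)}\}$ only yields $q'\le^0 q_k$ with $P^{q'}_m\subseteq U$ \emph{provided} $P^{q'}_m\cap U\ne\emptyset$; for that you would need some $s\in W_m(q')$ compatible in $\mathbb P_{n_j}$ with $r^{(\omega)}$ for $r=w(q_k,s)$, and nothing in the $\Sigma$-Prikry axioms guarantees this (two $\le^0$-extensions of the same $r$ need not be compatible).

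The paper circumvents gluing entirely by reversing the order of operations. A preliminary claim shows: for any $p\le r^\star$ and $\gamma<\mu$ there exist $\bar p\le^0 p$ and $\bar\gamma>\gamma$ such that every $q\le\bar p$ with $\lh(q)\in I$ forces $\dot C_{\lh(q)}\cap(\gamma,\bar\gamma)\ne\emptyset$. This is proved by one application of Lemma~\ref{RamseyPrikry} to the $0$-open set $D_{p,\gamma}:=\{q\le p:\lh(q)\in I\ \&\ \exists\gamma'>\gamma\ q\Vdash_{\mathbb P_{\lh(q)}}\gamma'\in\dot C_{\lh(q)}\}$, obtaining $\bar p$ with $W_n(\bar p)\subseteq D_{p,\gamma}$ for each relevant $n$; only \emph{then} is $\bar\gamma$ chosen, as the supremum of the (fewer than $\mu$ many) witnessing ordinals $\gamma_r$. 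Iterating $\omega$ times and taking a $\le^0$-lower bound gives Clause~(1) with no amalgamation needed. For Clause~(2) the paper again avoids your strong invariant: it shows, via Corollary~\ref{l6}(2), closure, and a pigeonhole on the integers $n_j$, that $\bigcap_{j<\eta}D_j$ is dense below $p$, where $D_j:=\{q:\exists\gamma\in(\alpha_j,\alpha)\ q\Vdash_{\mathbb P}\gamma\in\dot Y\}$, and then argues directly from a single $q\in G\cap\bigcap_j D_j$ using the closure of each $\dot C_n$. Your approach to Clause~(2) inherits the gluing problem at every successor stage and also leaves unexplained how the inner recursion, which works level-by-level inside the $\mathbb P_{n_j}$'s, can be made to keep each $\gamma_\xi$ below $\alpha$.
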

\begin{proof} We commence with a claim.
\begin{claim}\label{claim72} For every $p\le r^\star$ and $\gamma<\mu$, there exist $\bar p\le^0 p$ and $\bar\gamma\in(\gamma,\mu)$ such that,
for every $q\le\bar p$ with $\lh(q)\in I$,  $q\Vdash_{\mathbb P_{\lh(q)}}``\dot{C}_{\lh(q)}\cap(\gamma,\bar\gamma)\text{ is non-empty}"$.
\end{claim}
\begin{proof}
Given $p$ and $\gamma$ as above, write:
$$D_{p,\gamma}:=\{q\in\mathbb P\mid q\le p\ \&\ \lh(q)\in I\ \&\ \exists \gamma'>\gamma(q\Vdash_{\mathbb P_{\lh(q)}}\check\gamma'\in\dot{C}_{\lh(q)})\}.$$
Note that $I_{p,\gamma}:=\{\lh(q)\mid q\in D_{p,\gamma}\}$ is equal to $I\setminus\lh(p)$.\footnote{By standard facts about forcing, if $\mathbb Q$ is a notion of forcing, and $q$ is a condition in $\mathbb Q$
forcing that $\dot C$ is some cofinal subset of a cardinal $\mu$,
then for every ordinal $\gamma<\mu$, there exists an extension $q'$ of $q$ and some ordinal $\gamma'$ above $\gamma$
such that $q'\Vdash_{\mathbb Q}\check\gamma'\in\dot C$.}  Let $d: P\rightarrow 2$ be defined via $d(r):=1$ iff $r\in {D_{p,\gamma}}$. As ${D_{p,\gamma}}$ is $0$-open
we get from Lemma~\ref{RamseyPrikry} a condition $\bar p\le^0 p$ such that $\cone{\bar p}$ is a set of indiscernibles for $d$. Thereby, for all $n<\omega$,
if $P^{\bar p}_n\cap {D_{p,\gamma}}\neq\emptyset$, then $P^{\bar p}_n\s { D_{p,\gamma}}$.
 As $\bar p\le p$, $I_{p,\gamma}=I\setminus\lh(p)$,
and $W_n(\bar p)\s P^{\bar p}_n$ for all $n<\omega$,
we get in particular that $A_n:=W_{n-\lh(\bar p)}(\bar p)$ is a subset of $D_{p,\gamma}$ for all $n\in I\setminus\lh(p)$.

For all $n\in I\setminus\lh(p)$ and $r\in A_n$, fix $\gamma_r\in(\gamma,\mu)$ such that $$r\Vdash_{\mathbb P_{\lh(r)}}\gamma_r\in\dot{C}_{\lh(r)}.$$

By Definition~\ref{SigmaPrikry}(\ref{csize}), $|\bigcup_{n\in I\setminus\lh(p)}A_n|<\mu$,
so that $\bar\gamma:=\sup\{\gamma_r\mid r\in\bigcup_{n\in I\setminus\lh(p)}A_n\}+1$ is ${<}\mu$.

Now, let  $q\le \bar p$ with length in $I$ be arbitrary. As $I_{p,\gamma}=I\setminus\lh(p)$, we have $\lh(q)\in I_{p,\gamma}$.
In particular, $P^{\bar p}_{\lh(q)-\lh(\bar p)}\cap{D_{p,\gamma}}\neq\emptyset$,
and thus $A_{\lh(q)}\s D_{p,\gamma}$. Pick $r\in A_{\lh(q)}$ with $q\le r$. Then $r\Vdash_{\mathbb P_{\lh(r)}}\gamma_r\in\dot{C}_{\lh(r)}$.
In particular, $q\Vdash_{\lh(q)}``\dot{C}_{\lh(q)}\cap(\gamma,\bar\gamma)\text{ is non-empty}"$.
\end{proof}
Now, let $G$ be a $\mathbb P$-generic with $r^\star\in G$.
Of course, the interpretation of $\dot Y$ in $V[G]$ is $$Y:=\{\alpha<\mu \mid (\exists q\in G)(\forall r\le q)[\lh(r)\in I\rightarrow r\Vdash_{\mathbb{P}_{\lh(r)}}\check\alpha\in \dot{C}_{\lh(r)}]\}.$$
\begin{claim}\label{claim73}
\begin{enumerate}
\item $Y$ is unbounded in $V[G]$;
\item $\acc^+(Y)\cap\Gamma\s Y$.
\end{enumerate}
\end{claim}
\begin{proof} (1) We run a density argument in $V$. Let $p\le r^\star$ and $\gamma<\mu$ be arbitrary.
By an iterative application of Claim \ref{claim72}, we find a $\le_0$-decreasing sequence of conditions in $\mathbb P$, $\langle p_n\mid n<\omega\rangle$,
and an increasing sequence of ordinals below $\mu$, $\langle\gamma_n\mid n<\omega\rangle$,
such that $p_0\le^0 p$, $\gamma_0=\gamma$, and such that for every $n<\omega$ and every $q\le p_n$ with $\lh(q)\in I$, we have that $q\Vdash_{\mathbb P_{\lh(q)}}``\dot{C}_{\lh(q)}\cap(\gamma_{n},\gamma_{n+1})\text{ is non-empty}"$.

By Definition~\ref{SigmaPrikry}\eqref{c2}, $\mathbb P_{\lh(p)}$ is $\sigma$-closed,
so let $q^*$ be a lower bound for $\langle q_n\mid n<\omega\rangle$.
Put $\gamma^*:=\sup_{n<\omega}\gamma_n$. Then for every $r\le q^*$ with length in $I$,
 we have $r\Vdash_{\mathbb P_{\lh(r)}}\gamma^*\in\dot{C}_{\lh(r)}$. That is, $q^*$ witnesses that $\gamma^*\in Y\setminus\gamma$.

(2) Suppose that $\alpha\in\acc^+(Y)\cap\Gamma$.
Set $\eta:=\cf^V(\alpha)$, and pick a large enough ${k}<\omega$ such that $\eta<\kappa_{{k}}$.
Fix $p\in G$ such that $p\le r^\star$, $p\Vdash\check\alpha\in\acc^+(\dot{Y})$, and $\lh(p)\geq k$.

Work in $V$. Let $\langle\alpha_j\mid j<\eta\rangle$ be an increasing cofinal sequence in $\alpha$.
For each $j<\eta$, consider the set $D_j:=\{q\in P\mid \exists\gamma\in(\alpha_j,\alpha)~q\Vdash_{\mathbb{P}} \check\gamma\in\dot{Y}\}$.
Clearly, $D_j$ is open and dense below $p$. We claim that the intersection $\bigcap_{j<\eta}D_j$ is dense below $p$, as well.
To this end, let $p'\le p$ be arbitrary. For each $j<\eta$, $D_j$ is $0$-open and dense below $p'$,
so since $\eta<\kappa_k\le\kappa_{\lh(p')}$, we obtain from Corollary~\ref{l6}(2) and Definition~\ref{SigmaPrikry}(\ref{c2}),
a $\le_0$-decreasing sequence $\langle q_j\mid j\leq\eta\rangle$ along with a sequence of natural numbers $\langle n_j\mid j<\eta\rangle$
such that $q_0\le^0 p'$ and $P^{q_j}_{n_j}\subseteq D_j$ for all $j<\eta$.
Let $p'':=q_\eta$.
As $\eta=\cf^V(\alpha)>\omega$, we may pick a cofinal $J\s\eta$ for which $\{ n_j\mid j\in J\}$ is a singleton, say, $\{n\}$.
Then $P^{p''}_{n}\s \bigcap_{j\in J}P^{q_j}_{n_j}\s \bigcap_{j\in J}D_j=\bigcap_{j<\eta}D_j$.
Thus, the latter contains an element extending $p''$, which extends $p'$.

Fix $q\in G\cap \bigcap_{j<\eta}D_j$ extending $p$ and let us show that $q$ witnesses that $\alpha$ is in $Y$.
That is, we shall verify that, for all $r\le q$ with $\lh(r)\in I$, $r\Vdash_{\mathbb P_{\lh(r)}}\check\alpha\in\dot C_{\lh(r)}$.
First, notice that for all $j<\eta$, there exists some $\gamma_j\in(\alpha_j,\alpha)$ such that $q\Vdash_{\mathbb{P}} \check\gamma_j\in \dot{Y}$.
Now let $r\le q$  with $\lh(r)\in I$ be arbitrary and notice that $r\Vdash_{\mathbb P_{\lh(r)}}\check\gamma_j\in\dot C_{\lh(r)}$ for all $j<\eta$,
hence  $r\Vdash_{\mathbb P_{\lh(r)}}\check\alpha\in\dot C_{\lh(r)}$.
\end{proof}
This completes the proof of Lemma~\ref{lemma9}.
\end{proof}

\begin{lemma}\label{lemma10} Suppose that $r^*\in P$ forces that $\tau$ is a $\mathbb P$-name for a stationary subset $T$ of $\Gamma$.
For all $n<\omega$, write $\dot{T}_n:=\{(\check\alpha,p)\mid (\alpha,p)\in\mu\times P_n\ \&\ p\Vdash_{\mathbb P}\check\alpha\in\tau\}$.
Then  $D:=\{p\in P\mid (\forall q\le p)~q\Vdash_{\mathbb{P}_{\lh(q)}}``\dot{T}_{\lh(q)}\text{ is stationary}"\}$ is open and dense below $r^*$.
\end{lemma}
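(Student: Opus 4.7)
The plan is to argue by contradiction via Lemma~\ref{keylemma}. Openness of $D$ is immediate from its definition, so only density below $r^*$ requires argument. Apply Lemma~\ref{keylemma} to $r^*$ and $\tau$; Clause~(1) gives density at once, so assume Clause~(2), fixing $r^\star\le r^*$ and $I\in[\omega]^\omega$ such that every $q\le r^\star$ with $\lh(q)\in I$ satisfies $q\Vdash_{\mathbb P_{\lh(q)}}``\dot T_{\lh(q)}\text{ is nonstationary}"$. After shrinking, assume $\min(I)\ge\lh(r^\star)$.

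The next step converts this pointwise nonstationarity into uniform club names. Fix $n\in I$. To each $q\in P_n^{r^\star}$ we attach a $\mathbb P_n$-name $\dot C^q$ that $q$ forces to be a club in $\mu$ disjoint from $\dot T_n$. A standard mixing argument along a $\mathbb P_n$-maximal antichain $\mathcal A_n\s P_n^{r^\star}$ yields a single $\mathbb P_n$-name $\dot C_n$ such that each $q^*\in\mathcal A_n$ forces $\dot C_n$ to be (generically equal to) $\dot C^{q^*}$. By maximality, every $q\in P_n^{r^\star}$ is compatible in $\mathbb P_n$ with some element of $\mathcal A_n$, and it follows that every such $q$ forces $\dot C_n$ to be a club in $\mu$ disjoint from $\dot T_n$. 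I expect this mixing step to be the main technical obstacle, since uniformity of the witness must hold not merely across the antichain but across every length-$n$ extension of $r^\star$.

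Now invoke Lemma~\ref{lemma9} with $r^\star$, $I$, and $\langle \dot C_n\mid n\in I\rangle$ to obtain a $\mathbb P$-name $\dot Y$. Let $G$ be $\mathbb P$-generic with $r^\star\in G$, and write $Y:=\dot Y^G$ and $T:=\tau^G$. By Lemma~\ref{lemma9}, in $V[G]$ the set $Y$ is unbounded in $\mu$ and $\acc^+(Y)\cap\Gamma\s Y$; since $\acc^+(Y)$ contains a club of $\mu$ and $T\s\Gamma$ is stationary, the intersection $T\cap\acc^+(Y)\cap\Gamma$ is nonempty, so fix $\alpha$ inside it.

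The final step derives a contradiction. From $\alpha\in Y$, the definition of $\dot Y$ supplies some $q\in G$ with $q\le r^\star$ such that every $r\le q$ with $\lh(r)\in I$ satisfies $r\Vdash_{\mathbb P_{\lh(r)}}\check\alpha\in\dot C_{\lh(r)}$. From $\alpha\in T$, some $p\in G$ satisfies $p\Vdash_{\mathbb P}\check\alpha\in\tau$. Using directedness of $G$ together with iterated applications of Clause~\eqref{c4} of Definition~\ref{SigmaPrikry} and genericity (the set of extensions of $p\wedge q$ with length in $I$ is dense), pick $r\in G$ with $r\le p,q$ and $\lh(r)\in I$. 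Then $r\le p$ yields $(\check\alpha,r)\in\dot T_{\lh(r)}$, so $r\Vdash_{\mathbb P_{\lh(r)}}\check\alpha\in\dot T_{\lh(r)}$; meanwhile $r\le q$ with $\lh(r)\in I$ gives $r\Vdash_{\mathbb P_{\lh(r)}}\check\alpha\in\dot C_{\lh(r)}$; yet $r\le r^\star$ with $\lh(r)\in I$ forces $\dot C_{\lh(r)}\cap\dot T_{\lh(r)}=\emptyset$ by construction, a contradiction.
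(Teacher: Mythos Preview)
Your proof is correct and follows essentially the same route as the paper: invoke Lemma~\ref{keylemma}, in Case~(2) use mixing to obtain uniform club names $\dot C_n$, apply Lemma~\ref{lemma9} to produce $\dot Y$, and derive a contradiction from $Y\cap T\neq\emptyset$. Your final step, splitting the witness into $q\in G$ with $(\alpha,q)\in R$ and $p\in G$ forcing $\check\alpha\in\tau$ before passing to a common refinement $r\in G$ of length in $I$, is in fact slightly cleaner than the paper's compressed formulation.
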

\begin{proof} Suppose not. Then, by Lemma~\ref{keylemma},
let us pick $r^\star\le r^*$ and $I\in[\omega]^\omega$ such that, for all $q\le r^\star$ with $\lh(q)\in I$,
$$q\Vdash_{\mathbb{P}_{\lh(q)}}``\dot{T}_{\lh(q)}\text{ is nonstationary}".$$
Now, for each $n\in I$, we appeal to the maximal principle (also known as the \emph{mixing lemma}) to find a $\mathbb{P}_n$-name $\dot{C}_n$ for a club subset of $\mu$, such that, for all $q\le r^\star$ with $\lh(q)\in I$,
we have $q\Vdash_{\mathbb{P}_{\lh(q)}}\dot{C}_{\lh(q)}\cap\dot{T}_{\lh(q)}=\emptyset$.
Consider the $\mathbb P$-name:
$$\dot Y:=\{(\check \alpha,q)\in\mu\times P \mid q\le r^\star\ \&\ \forall r\le q[\lh(r)\in I\rightarrow r\Vdash_{\mathbb{P}_{\lh(r)}}\check\alpha\in \dot{C}_{\lh(r)}]\}.$$
Let $G$ be $\mathbb P$-generic over $V$, with $r^\star\in G$, and $Y$ be the interpretation of $\dot Y$ in $V[G]$. By Lemma~\ref{lemma9}:
 \begin{enumerate}
\item $V[G]\models Y\text{ is unbounded in }\mu$;
\item $V[G]\models \acc^+(Y)\cap\Gamma\s Y$.
\end{enumerate}
As $r^\star\le r^*$, our hypothesis entails:
 \begin{enumerate}
 \item[(3)] $V[G]\models T\text{ is a stationary subset of }\Gamma$.
\end{enumerate}

So $V[G]\models Y\cap T\neq\emptyset$. Pick $\alpha<\mu$ and $r\in G$
such that $r\Vdash_{\mathbb P}\check\alpha\in\dot Y\cap\tau$. Of course, we may find such $r$ that in addition satisfies $r \le r^\star$ and $\lh(r)\in I$.
By definition of $\dot T_{\lh(r)}$,  the ordered-pair $(\check\alpha,r)$ is an element of the name $\dot T_{\lh(r)}$. In particular,
$r\Vdash_{\mathbb P_{\lh(r)}}\check\alpha\in\dot T_{\lh(r)}$.

From $r\le r^\star$, $\lh(r)\in I$, and $r\Vdash_{\mathbb P}\check\alpha\in\dot Y$, we have $r\Vdash_{\mathbb P_{\lh(r)}}\check\alpha\in\dot C_{\lh(r)}$.

Altogether $r\Vdash_{\mathbb P_{\lh(r)}}\dot C_{\lh(r)}\cap \dot T_{\lh(r)}\neq\emptyset$, contradicting the choice of $\dot C_{\lh(r)}$.
\end{proof}

Recall that a supercompact cardinal $\chi$ is said to be \emph{Laver-indestructible} iff for every $\chi$-directed-closed notion of forcing $\mathbb{Q}$, $\one_{\mathbb Q}\Vdash_\mathbb{Q}``\chi\text{ is supercompact}"$.
Also recall that for every supercompact cardinal $\chi$ and every regular cardinal $\nu\geq\chi$, $\refl({<}\chi,E^\nu_{<\chi},E^\nu_{<\chi})$ holds.
We refer the reader to \cite{MR2768691} for further details.
For our purpose, we would just need the following:

\begin{lemma}\label{l25} For all $n<\omega$,
if $\kappa_n$ is a Laver-indestructible supercompact cardinal, then $V^{\mathbb P_n}\models \refl({<}\omega,E^\mu_{<\kappa_n},E^\mu_{<\kappa_n})$.\footnote{Note that, as $\mathbb P_n$ is $\kappa_n$-closed,
$(E^\mu_{<\kappa_n})^{V^{\mathbb P_n}}=(E^\mu_{<\kappa_n})^{V}$.}
\end{lemma}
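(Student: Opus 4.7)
The plan is to combine the Laver-indestructibility of $\kappa_n$ with the standard simultaneous reflection principle at a supercompact cardinal. Recall the general fact flagged right above the statement: for any supercompact cardinal $\chi$ and any regular $\nu\geq\chi$, one has $\refl({<}\chi,E^\nu_{<\chi},E^\nu_{<\chi})$, which in particular yields $\refl({<}\omega,E^\nu_{<\chi},E^\nu_{<\chi})$. So the task reduces to (a) transferring supercompactness of $\kappa_n$ into $V^{\mathbb P_n}$, and (b) checking that $\mu$ is still a regular cardinal $\geq\kappa_n$ there.

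For (a), I would invoke Clause~\eqref{c2} of Definition~\ref{SigmaPrikry}, which says that $\mathbb P_n$ is $\kappa_n$-directed-closed. Since $\kappa_n$ is assumed to be Laver-indestructible supercompact, this is precisely a poset after which supercompactness is preserved, so $\one_{\mathbb P_n}\Vdash_{\mathbb P_n}``\check\kappa_n\text{ is supercompact}"$.

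For (b), I would argue that $\mu$ remains a regular cardinal above $\kappa_n$ in $V^{\mathbb P_n}$. The $\kappa_n$-directed-closedness adds no new sequences of length $<\kappa_n$, so cofinalities below $\kappa_n$ are preserved; in particular, as noted in the footnote, $(E^\mu_{<\kappa_n})^{V^{\mathbb P_n}}=(E^\mu_{<\kappa_n})^{V}$. In the opposite direction, Clause~\eqref{c1} of Definition~\ref{SigmaPrikry} provides that $c\restriction(P_n\cup\{\one\})$ witnesses that $\mathbb P_n$ is $\mu^+$-$2$-linked, hence $\mu^+$-cc, so $\mu^+$ and all larger cardinals are preserved. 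Using further that $\kappa_n$ is supercompact, and therefore $\mu$-supercompact, in $V^{\mathbb P_n}$, one concludes that no cardinal in the interval $(\kappa_n,\mu]$ is collapsed, so $\mu$ itself stays a regular cardinal.

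Once (a) and (b) are in place, I would apply the general consequence of supercompactness inside $V^{\mathbb P_n}$ with $\chi:=\kappa_n$ and $\nu:=\mu$, obtaining $\refl({<}\kappa_n,E^\mu_{<\kappa_n},E^\mu_{<\kappa_n})$; and since $\omega<\kappa_n$, this immediately gives $\refl({<}\omega,E^\mu_{<\kappa_n},E^\mu_{<\kappa_n})$, as desired. The main delicate point is step (b): the $\mu^+$-cc takes care of the cardinals from $\mu^+$ upward and the $\kappa_n$-closedness takes care of those up to $\kappa_n$, but one must argue separately, leveraging the preserved supercompactness of $\kappa_n$, that nothing in the gap $(\kappa_n,\mu]$ is collapsed and that $\mu$ survives as a regular cardinal.
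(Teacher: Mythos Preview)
Your steps (a) and the final application of the general reflection fact are exactly the paper's proof: it observes that $\mathbb P_n$ is $\kappa_n$-directed-closed by Clause~\eqref{c2}, invokes Laver indestructibility to get $V^{\mathbb P_n}\models``\kappa_n\text{ is supercompact}"$, and immediately concludes $\refl({<}\omega,E^\mu_{<\kappa_n},E^\mu_{<\kappa_n})$. The paper does not carry out anything like your step~(b); it simply applies the quoted fact about supercompacts without pausing to check that $\mu$ remains a regular cardinal in $V^{\mathbb P_n}$.

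Your instinct to isolate that point is reasonable, but the argument you give for it does not work. The inference ``$\kappa_n$ is $\mu$-supercompact in $V^{\mathbb P_n}$, hence no cardinal in $(\kappa_n,\mu]$ is collapsed'' is invalid: $\lambda$-supercompactness of $\kappa_n$ asserts the existence of a fine normal measure on $P_{\kappa_n}(\lambda)$ (equivalently, a suitable elementary embedding with critical point $\kappa_n$), and this makes perfect sense for any ordinal $\lambda\ge\kappa_n$ whether or not $\lambda$ is a cardinal. It tells you nothing about the cardinal structure of the model above $\kappa_n$. So your $\mu^+$-cc observation handles cardinals from $\mu^+$ upward and closure handles those below $\kappa_n$, but the gap $(\kappa_n,\mu]$ is not closed by the supercompactness of $\kappa_n$ as you claim. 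If you want to justify (b), you need a different argument; the paper itself simply leaves this implicit.
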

\begin{proof} By Definition~\ref{SigmaPrikry}\eqref{c2}, $\mathbb P_n$ is $\kappa_n$-directed-closed,
and hence $V^{\mathbb P_n}\models``\kappa_n\text{ is supercompact}"$. In particular, $V^{\mathbb P_n}\models\refl({<}\omega,E^\mu_{<\kappa_n},E^\mu_{<\kappa_n})$.
\end{proof}

\begin{lemma}\label{lemma11} Suppose:
\begin{itemize}
\item For all $n<\omega$, $V^{\mathbb P_n}\models\refl({<}\omega,E^\mu_{<\kappa_n},E^\mu_{<\kappa_n})$;
\item $r^*\in P$ forces that $\langle \tau^i\mid i<k\rangle$ is a finite sequence of $\mathbb P$-names for stationary subsets of $(E^\mu_{<\kappa})^V$;
\end{itemize}
Write $\dot{T}_n^i:=\{(\check\alpha,p)\mid (\alpha,p)\in \mu\times P_n\ \&\ p\Vdash_{\mathbb P}\check\alpha\in\tau^i\}$ for all $i<k$ and $n<\omega$.

Suppose $D^i:=\{p\in P\mid (\forall q\le p) q\Vdash_{\mathbb{P}_{\lh(q)}}``\dot{T}^i_{\lh(q)}\text{ is stationary}"\}$ is open and dense below $r^*$ for each $i<k$. Then for every $\mathbb P$-generic $G$ over $V$ with $r^*\in G$,  $\langle T^i\mid i<k\rangle$  reflects simultaneously in $V[G]$.\footnote{ $\langle T^i\mid i<k\rangle$ stands for the $G$-interpretation of the sequence of $\mathbb{P}$-names $\langle \tau^i\mid i<k\rangle$.}
\end{lemma}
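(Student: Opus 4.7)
My plan is to descend within $G$ to a condition $p^*$ such that $(\dot T^i_{n^*})_H$ captures all of $\tau^i$ and forms a stationary subset of $E^\mu_{<\kappa_{n^*}}$ in the intermediate $\mathbb P_{n^*}$-generic extension $V[H]$ (where $n^*:=\lh(p^*)$), to invoke the reflection hypothesis there, and finally to transfer the reflection point back to $V[G]$. To this end, fix a $\mathbb P$-generic $G$ with $r^*\in G$; since $k$ is finite and each $D^i$ is open and dense below $r^*$, so is $\bigcap_{i<k}D^i$. Moreover, since $\mathbb P$ forces $\check\kappa$ to have cofinality $\omega$ while preserving $\mu$, each stationary $T^i\subseteq(E^\mu_{<\kappa})^V$ decomposes in $V[G]$ as $\bigcup_{m<\omega}(T^i\cap(E^\mu_{<\kappa_m})^V)$, and $\mu$-completeness of $\mathrm{NS}_\mu$ yields $m^i<\omega$ with $T^i\cap E^\mu_{<\kappa_{m^i}}$ stationary. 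Setting $m^*:=\max_{i<k}m^i$, a density argument inside $G$ (combined with Clause~\eqref{c4} of Definition~\ref{SigmaPrikry} to boost length) produces $p^*\in G\cap\bigcap_{i<k}D^i$, $p^*\le r^*$, with $n^*:=\lh(p^*)\geq m^*$ such that $p^*\Vdash_{\mathbb P}$ that $\tau^i\cap\check E^\mu_{<\kappa_{n^*}}$ is stationary, for every $i<k$.

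Next, extract the $\mathbb P_{n^*}$-generic filter $H\subseteq G$ canonically determined by $p^*$ via the projection $q\mapsto(\lh(q)-n^*)(\one,q)$ on $\cone{p^*}$. Corollary~\ref{l6}(1) shows that $T^i$ is actually determined by $H$: indeed, for any $\alpha\in T^i$, the set of $q'\le^0 p^*$ in $P_{n^*}$ that decide the sentence $\check\alpha\in\tau^i$ is $\le^0$-dense in $\mathbb P_{n^*}$ below $p^*$, so some such $q'$ lies in $H$; by compatibility within the filter $G$ with any $q\in G$ forcing $\check\alpha\in\tau^i$, this $q'$ must decide positively, whence $\alpha\in(\dot T^i_{n^*})_H$. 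Thus $T^i=(\dot T^i_{n^*})_H\in V[H]$, so $T^i\cap E^\mu_{<\kappa_{n^*}}\in V[H]$; this set is stationary in $V[G]$, hence stationary in $V[H]$, as $V[H]$-clubs are a fortiori $V[G]$-clubs. Applying the hypothesis $V^{\mathbb P_{n^*}}\models\refl({<}\omega,E^\mu_{<\kappa_{n^*}},E^\mu_{<\kappa_{n^*}})$ to the finite collection $\{T^i\cap E^\mu_{<\kappa_{n^*}}\mid i<k\}$, I obtain $\alpha\in V[H]$ with $\omega<\cf^{V[H]}(\alpha)<\kappa_{n^*}$ such that $T^i\cap\alpha$ is stationary in $\alpha$ in $V[H]$ for every $i<k$.

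The final step is to transfer stationarity of each $T^i\cap\alpha$ back to $V[G]$. The hard part will be showing that the quotient forcing $\mathbb P/H$ adds no new $\theta$-sequences of ordinals for any $\theta$ with $\omega<\theta<\kappa_{n^*}$; this I would derive by combining Lemma~\ref{l14}\eqref{C1l14}, the $\kappa_{n^*}$-directed-closure of $\mathbb P_{n^*}$ (Definition~\ref{SigmaPrikry}\eqref{c2}), and repeated use of the Complete Prikry Property (Corollary~\ref{l6}). Granting this, $\cf^{V[G]}(\alpha)=\cf^{V[H]}(\alpha)>\omega$, and any club $C$ of $\alpha$ in $V[G]$ contains a cofinal subset of order type $\cf^{V[H]}(\alpha)$ that must lie in $V[H]$; by stationarity in $V[H]$, this subset meets each $T^i\cap\alpha$, so $C$ meets $T^i\cap\alpha$ as well, yielding the simultaneous reflection of $\langle T^i\mid i<k\rangle$ at $\alpha$ in $V[G]$.
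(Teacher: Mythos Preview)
Your argument has a genuine gap at the point where you ``extract the $\mathbb P_{n^*}$-generic filter $H\subseteq G$''. The map $q\mapsto m(\one,q)$ (sending $q$ to the greatest $r\in P_{n^*}$ with $q\le r$) is \emph{not} a projection of forcing notions in general, so the upward closure in $\mathbb P_{n^*}$ of its image on $G$ need not be $\mathbb P_{n^*}$-generic. Concretely, in Vanilla Prikry: if $p^*=(\langle s_0,\dots,s_{n^*-1}\rangle,A)$ and $q=(\langle s_0,\dots,s_{n^*-1},x\rangle,B)\le p^*$, then any $r'\le^0 p^*$ with $x\notin A^{r'}$ (and such $r'$ abound) is incompatible with $q$; hence a dense subset of $\mathbb P_{n^*}$ can be chosen to avoid every element compatible with $q$. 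Without genericity of $H$ you cannot invoke the hypothesis $V^{\mathbb P_{n^*}}\models\refl({<}\omega,\dots)$, and your identification $T^i=(\dot T^i_{n^*})_H$ also breaks down, since the decisive $q'\le^0 p^*$ you produce via the Prikry property has no reason to lie in $H$. Your deferred ``hard part'' about the quotient $\mathbb P/H$ presupposes the same unestablished genericity.

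The paper's proof avoids this entirely by \emph{not} trying to realize $\mathbb P_{n}$ as a factor of $\mathbb P$. Instead it runs a pure density argument: given $p_0\le r^*$, pick $p\in\bigcap_i D^i$ below $p_0$ with $n:=\lh(p)$ large, and take an \emph{arbitrary} $\mathbb P_n$-generic $G_n$ with $p\in G_n$ (unrelated to any $\mathbb P$-generic). Reflection in $V[G_n]$ yields $q\le^0 p$ and $\delta\in E^\mu_{<\kappa_n}$; since $\cf^V(\delta)<\kappa_n$ and $\mathbb P_n$ is $\kappa_n$-closed, one finds $r\le^0 q$ deciding each $C\cap(\dot T^i_n)_{G_n}$ to be a \emph{ground-model} stationary set $B^i\subseteq\delta$. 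Then $r\Vdash_{\mathbb P}\check B^i\subseteq\tau^i\cap\check\delta$, and since $\otp(B^i)<\kappa$, Lemma~\ref{l14}\eqref{C1l14} guarantees $B^i$ remains stationary in $V^{\mathbb P}$. No quotient analysis or transfer of stationarity between models is needed, because the reflection witness lives in $V$.
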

\begin{proof} We run a density argument below the condition $r^*$. Given an arbitrary $p_0\le r^*$,
pick $p\in\bigcap_{i<k}D^i$ below $p_0$ and a large enough $m<\omega$ such that
$p\Vdash_\mathbb{P}``\forall i<k(\tau^i\cap E^\mu_{<\kappa_m})\text{ is stationary}"$.
By possibly extending $p$ using Definition~\ref{SigmaPrikry}\eqref{c4}, we may assume that $n:=\lh(p)$ is $\geq m$.
Let $G_n$ be $\mathbb{P}_n$-generic with $p\in G_n$.
As $V[G_n]\models\refl({<}\omega,E^\mu_{<\kappa_n},E^\mu_{<\kappa_n})$,
let us fix some $q\le^0 p$ in $G_n$, and some $\delta\in E^\mu_{<\kappa_n}$ such that $q\Vdash_{\mathbb{P}_n}``\forall i<k(\dot{T}^i_n\cap\delta\text{ is stationary})"$.

In $V$, pick a club $C\s\delta$ of order type $\cf(\delta)$. Note that $|C|<\kappa_n$.
Then for each $i<k$, $q\Vdash_{\mathbb{P}_n}``\dot{T}^i_n\cap C\text{ is stationary in }\delta"$. Working for a moment in $V[G_n]$, write
$A^i:=C\cap (\dot{T}^i_n)_{G_n}$. 
Since $\mathbb{P}_n$ is $\kappa_n$-closed, we may find $r\in P_n$ extending $q$ that,
for all $i<k$, decides $A^i$  to be some ground model stationary subset $B^i$ of $\delta$.
Then, for every $i<k$, $$r\Vdash_{\mathbb{P}_n}``\dot{T}_n^i\cap\delta\text{ contains the stationary set }\check B^i".$$

By definition of the name $\dot T^i_n$, we have that $r\Vdash_{\mathbb{P}}\check B^i\s\tau^i\cap\delta$.
Finally, since $\otp(B^i)\leq\delta<\kappa$, Lemma~\ref{l14}\eqref{C1l14}, $B^i$ remains stationary in $V^{\mathbb P}$ for each $i$.
So, $r\le p_0$, and $r\Vdash_{\mathbb{P}}``\tau^i\cap\delta\text{ is stationary for each }i<k"$.
\end{proof}

\begin{cor}\label{c27}  Suppose $V^{\mathbb P_n}\models\refl({<}\omega,E^\mu_{<\kappa_n},E^\mu_{<\kappa_n})$ for all $n<\omega$.
Then  $V^{\mathbb P}\models \refl({<}\omega,\Gamma)$.
\end{cor}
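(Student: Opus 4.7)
The plan is to piece together Lemma~\ref{lemma10} and Lemma~\ref{lemma11} by a straightforward density argument, exploiting the inclusion $\Gamma\subseteq (E^\mu_{<\kappa})^V$ so that any name forced to be a stationary subset of $\Gamma$ automatically feeds the hypothesis of both lemmas. No new combinatorial ingredient is needed; the work has already been carried out in the preceding two lemmas.

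In more detail: fix a condition $r^*\in P$ and a finite sequence $\langle\tau^i\mid i<k\rangle$ of $\mathbb P$-names such that $r^*\Vdash_{\mathbb P}``\langle\tau^i\mid i<k\rangle\text{ is a sequence of stationary subsets of }\check\Gamma"$. For each $i<k$, apply Lemma~\ref{lemma10} (using that $\Gamma\s(E^\mu_{<\kappa})^V$, so each $\tau^i$ is in particular a name for a stationary subset of $\Gamma$ in the sense required by that lemma) to the name $\tau^i$ and the condition $r^*$. This yields that
$$D^i:=\{p\in P\mid (\forall q\le p)~q\Vdash_{\mathbb P_{\lh(q)}}``\dot T^i_{\lh(q)}\text{ is stationary}"\}$$
is open and dense below $r^*$, where $\dot T^i_n$ is defined from $\tau^i$ exactly as in the statement of Lemma~\ref{lemma10}. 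Since we have only finitely many $i$'s, the intersection $\bigcap_{i<k}D^i$ is dense below $r^*$ as well; but for the next step we only need each $D^i$ individually to be dense below $r^*$, which is precisely the hypothesis of Lemma~\ref{lemma11}.

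Now, the assumption $V^{\mathbb P_n}\models\refl({<}\omega,E^\mu_{<\kappa_n},E^\mu_{<\kappa_n})$ for all $n<\omega$, together with the denseness of the $D^i$'s, lets us invoke Lemma~\ref{lemma11} directly. Its conclusion is that for every $\mathbb P$-generic filter $G$ with $r^*\in G$, the sequence $\langle T^i\mid i<k\rangle$ (the $G$-interpretation of $\langle\tau^i\mid i<k\rangle$) reflects simultaneously in $V[G]$. In other words, $r^*\Vdash_{\mathbb P}``\bigcap_{i<k}\tr(\tau^i)\neq\emptyset"$. Because $r^*$ and $\langle\tau^i\mid i<k\rangle$ were arbitrary, this establishes $V^{\mathbb P}\models\refl({<}\omega,\Gamma)$.

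There is essentially no obstacle here beyond checking that the side condition ``$\sup\{\cf(\alpha)\mid \alpha\in\bigcup\mathcal{S}\}<\mu$'' in the definition of $\refl({<}\omega,\Gamma)$ is automatic: by definition of $\Gamma$, every $\alpha\in\bigcup_{i<k}\tau^i$ is forced to satisfy $\cf^V(\alpha)<\kappa<\mu$, so the constraint is satisfied trivially. The one cosmetic subtlety is that the interpretation of $\tau^i$ in $V[G]$ need not remain a subset of the $V[G]$-version of $E^\mu_{<\kappa}$ (cofinalities at $\mu$ may be altered by the Prikry-type forcing); but the conclusion $\refl({<}\omega,\Gamma)$ is phrased purely in terms of the $V$-set $\Gamma$, so this is immaterial.
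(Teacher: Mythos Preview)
Your proof is correct and follows essentially the same approach as the paper: apply Lemma~\ref{lemma10} to each $\tau^i$ to obtain that the sets $D^i$ are open and dense below $r^*$, then invoke Lemma~\ref{lemma11} to conclude simultaneous reflection. The paper's proof is just a terser version of what you wrote; your additional remarks (on the side condition in the definition of $\refl$, and on the harmlessness of cofinality changes) are accurate but not needed for the argument to go through.
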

\begin{proof} Let $r^*$ be a condition in $G$ forcing that $\langle \tau^i\mid i<k\rangle$ is a finite sequence of $\mathbb P$-names for stationary subsets $\langle T^i\mid i<k\rangle$ of $\Gamma$.
For each $i<k$ and each $n<\omega$, write $\dot{T}_n^i:=\{(\check\alpha,p)\mid (\alpha,p)\in(\mu\times P_n)\ \&\ p\Vdash_{\mathbb P}\check\alpha\in\tau^i\}$.
By Lemma~\ref{lemma10}, for each $i<k$, $D^i:=\{p\in P\mid (\forall q\le p) q\Vdash_{\mathbb{P}_{\lh(q)}}``\dot{T}^i_{\lh(q)}\text{ is stationary}"\}$ is open and dense below $r^*$.
Finally, by virtue of Lemma~\ref{lemma11}, $\langle T^i\mid i<k\rangle$  reflects simultaneously in $V[G]$.
\end{proof}

Putting Lemma~\ref{l25} together with Corollary~\ref{c27}, we arrive at the following conclusion.
\begin{cor}\label{c28}
 Suppose that each cardinal in $\Sigma$ is a Laver-indestructible supercompact cardinal. Then $\one\Vdash_\mathbb{P} \refl({<}\omega, \Gamma)$.\qed
\end{cor}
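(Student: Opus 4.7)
The plan is that this is essentially a one-line corollary obtained by chaining the two immediately preceding results. Concretely, I would argue that the hypothesis of Corollary~\ref{c27} is automatically satisfied under the assumption that every $\kappa_n \in \Sigma$ is Laver-indestructible supercompact, and then invoke Corollary~\ref{c27} to conclude.

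First I would fix an arbitrary $n<\omega$. Since $\kappa_n$ is Laver-indestructible supercompact, Lemma~\ref{l25} gives directly that $V^{\mathbb P_n}\models\refl({<}\omega,E^\mu_{<\kappa_n},E^\mu_{<\kappa_n})$. (Internally, Lemma~\ref{l25} is using Clause~(\ref{c2}) of Definition~\ref{SigmaPrikry}, which tells us that $\mathbb P_n$ is $\kappa_n$-directed-closed, so that Laver-indestructibility ensures $\kappa_n$ remains supercompact in $V^{\mathbb P_n}$, from which simultaneous reflection on $E^\mu_{<\kappa_n}$ follows.) As $n$ was arbitrary, the hypothesis of Corollary~\ref{c27} holds.

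Having verified this, I would simply invoke Corollary~\ref{c27} to conclude that $V^{\mathbb P}\models\refl({<}\omega,\Gamma)$, which is another way of saying $\one\Vdash_{\mathbb P}\refl({<}\omega,\Gamma)$, as desired. There is no real obstacle here; all of the work has been done in Lemmas~\ref{lemma10}, \ref{lemma11}, and \ref{l25}, whose combination is precisely what Corollary~\ref{c27} packages. The only thing worth flagging is the implicit use of Lemma~\ref{l14}\eqref{C1l14} inside Corollary~\ref{c27} to ensure that the reflecting stationary sets extracted at level $\kappa_n$ (which are of size $<\kappa$) remain stationary in the full extension by $\mathbb P$; this is harmless, but is the only nontrivial point in the chain.
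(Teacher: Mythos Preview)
Your proposal is correct and is exactly the paper's argument: the paper marks this corollary with a \qed after noting just before it that it follows by putting Lemma~\ref{l25} together with Corollary~\ref{c27}. Your additional remarks about the internal workings of Lemma~\ref{l25} and the role of Lemma~\ref{l14}\eqref{C1l14} inside the proof of Corollary~\ref{c27} are accurate elaborations, but the paper itself gives no proof beyond the chaining you describe.
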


Towards a model $V[G]$ satisfying $\refl({<}\omega,\kappa^+)$, we would need to address the reflection of stationary subsets of $\mu\setminus\Gamma$.
In the special case that $\kappa$ is singular and $\mu=\kappa^+$, the set $\mu\setminus\Gamma$ will be nothing but $(E^{\mu}_\omega)^V$.
It is not hard to verify that in this scenario, $V[G]$ will satisfy $\refl({<}\omega,\kappa^+)$ iff it will satisfy $\refl({<}\omega,\Gamma)+\refl(1,(E^{\mu}_\omega)^V,\Gamma)$.\footnote{The easy proof may be found in \cite{partII}.}
For this, in the next section we shall devise a notion of forcing for killing a given single counterexample to $\refl(1,E^\mu_\omega,\Gamma)$.
Then, in \cite{partII}, we find a mean to iterate it.

\section{Killing one non-reflecting stationary set}\label{killingone}

Throughout this section, suppose that $(\mathbb P,\lh,c)$ is a given $\Sigma$-Prikry notion of forcing.
Denote $\mathbb{P}=(P,\le)$ and $\Sigma=\langle \kappa_n\mid n<\omega\rangle$.
Also, define $\kappa$ and $\mu$ as in Definition~\ref{SigmaPrikry},
and assume that $\one_{\mathbb P}\Vdash_{\mathbb P}``\check\kappa\text{ is singular}"$ and that $\mu^{<\mu}=\mu$.
Our universe of sets is denoted by $V$,
and we assume that, for all $n<\omega$,
$V^{\mathbb P_n}\models \refl(1,E^\mu_{\omega},E^\mu_{<\kappa_n})$.\footnote{In particular, $\kappa_n>\aleph_1$ in $V^{\mathbb P_n}$.}
Write $\Gamma:=\{\alpha<\mu\mid \omega<\cf^V(\alpha)<\kappa\}$.

\begin{lemma}\label{cor9}
Suppose $r^\star\in P$ forces that $\dot T$ is a $\mathbb P$-name for a stationary subset $T$ of $(E^\mu_\omega)^V$ that does not reflect in $\Gamma$.
For each $n<\omega$, write $\dot{T}_n:=\{(\check\alpha,p)\mid (\alpha,p)\in E^\mu_\omega\times P_n\ \&\ p\Vdash_{\mathbb P}\check\alpha\in\dot T\}$.
Then, for every $q\le r^\star$,
we have $q\Vdash_{\mathbb{P}_{\lh(q)}}``\dot{T}_{\lh(q)}\text{ is nonstationary}"$.
\end{lemma}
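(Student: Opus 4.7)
The plan is to argue by contradiction, mimicking the outer structure of Lemma~\ref{lemma11}. Suppose that some $q\le r^\star$ fails to force in $\mathbb{P}_{\lh(q)}$ that $\dot T_{\lh(q)}$ is nonstationary, and set $n:=\lh(q)$. Then there is $q_0\le q$ in $\mathbb{P}_n$ with $q_0\Vdash_{\mathbb{P}_n}``\dot T_n\text{ is stationary in }\check\mu"$. The goal is to use the standing hypothesis $V^{\mathbb{P}_n}\models\refl(1,E^\mu_\omega,E^\mu_{<\kappa_n})$ to locate a reflection point $\delta\in\Gamma$ of (a version of) $\dot T_n$ in $V^{\mathbb{P}_n}$, transfer this reflection into a $\mathbb{P}$-forcing statement about $\dot T$, and thereby contradict the assumed nonreflection of $\dot T$ in $\Gamma$ forced by $r^\star$.

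To make $\delta$ ground-model, I would apply the reflection hypothesis inside $V^{\mathbb{P}_n}$ together with a density argument and the maximal principle in $\mathbb{P}_n$, producing some $q_1\le q_0$ in $\mathbb{P}_n$ and an ordinal $\delta<\mu$ with $q_1\Vdash_{\mathbb{P}_n}``\dot T_n\cap\check\delta\text{ is stationary in }\check\delta"$. Since $\dot T_n$ only names ordinals of $V$-cofinality $\omega$, the stationarity of $\dot T_n\cap\delta$ at $\delta$ forces $\cf^{V^{\mathbb{P}_n}}(\delta)>\omega$; and since $\delta\in(E^\mu_{<\kappa_n})^{V^{\mathbb{P}_n}}$ while $\mathbb{P}_n$ is $\kappa_n$-closed (so it preserves cofinalities ${<}\kappa_n$), I get $\omega<\cf^V(\delta)<\kappa_n<\kappa$, placing $\delta\in\Gamma$. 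Now I would choose in $V$ a club $C\s\delta$ of order-type $\cf^V(\delta)<\kappa_n$. By $\kappa_n$-closure of $\mathbb{P}_n$, every $\mathbb{P}_n$-name for a subset of $C$ is forced to equal a ground-model set, so there is $r\le q_1$ in $\mathbb{P}_n$ and a set $B\in V$ with $r\Vdash_{\mathbb{P}_n}C\cap\dot T_n=\check B$. As clubs in $\delta$ are absolute between $V$ and $V^{\mathbb{P}_n}$ (being coded by cofinal sequences of length ${<}\kappa_n$), $B$ is stationary in $\delta$ in $V$.

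The final step, and what I expect to be the chief subtlety, is to convert the $\mathbb{P}_n$-forcing information $r\Vdash_{\mathbb{P}_n}\check B\s\dot T_n$ into a genuine $\mathbb{P}$-forcing statement $r^*\Vdash_{\mathbb{P}}\check B\s\dot T$ for some $r^*\le r$. Unpacking the definition of $\dot T_n$ and using genericity, I would verify that, for each $\beta\in B$, the set $D_\beta:=\{p\in P_n\mid p\Vdash_\mathbb{P}\check\beta\in\dot T\}$ is dense and open below $r$ in $\mathbb{P}_n$. The $\kappa_n$-directed-closure of $\mathbb{P}_n$ then supplies a common refinement $r^*\le r$ with $r^*\in\bigcap_{\beta\in B}D_\beta$ (note $|B|\le|C|<\kappa_n$), so that $r^*\Vdash_\mathbb{P}\check B\s\dot T$. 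Finally, $\mathbb{P}$ adds no bounded subsets of $\kappa$ by Lemma~\ref{l14}\eqref{C1l14}, so $B$ remains stationary in $\delta$ in $V^\mathbb{P}$, and therefore $r^*\Vdash_\mathbb{P}``\dot T\cap\check\delta\text{ is stationary}"$. Since $r^*\le r^\star$ and $\delta\in\Gamma$, this contradicts the assumption that $r^\star$ forces $\dot T$ to have no reflection points in $\Gamma$. The subtlety just noted --- that $r\Vdash_{\mathbb{P}_n}\check\beta\in\dot T_n$ unpacks only to \emph{density} of $D_\beta$ below $r$ and not to membership of $r$ itself in $D_\beta$ --- is precisely what forces one to extract the further refinement $r^*$ via $\kappa_n$-directed-closure, exactly as (implicitly) in the last line of the proof of Lemma~\ref{lemma11}.
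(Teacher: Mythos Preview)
Your argument is correct and follows essentially the same route as the paper's: argue by contradiction, extend inside $\mathbb P_n$ to force $\dot T_n$ stationary, invoke $\refl(1,E^\mu_\omega,E^\mu_{<\kappa_n})$ to get a reflection point $\delta\in\Gamma$, intersect with a ground-model club $C\subseteq\delta$ of size ${<}\kappa_n$, decide $C\cap\dot T_n$ to be a ground-model stationary $B$, and use Lemma~\ref{l14}\eqref{C1l14} to preserve stationarity of $B$ in $V^{\mathbb P}$.

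The one place you diverge is the passage from $r\Vdash_{\mathbb P_n}\check B\subseteq\dot T_n$ to a $\mathbb P$-forcing statement. The paper asserts directly that $r\Vdash_{\mathbb P}\check B\subseteq\dot T$ ``by definition of the name $\dot T_n$''; you instead observe that $r\Vdash_{\mathbb P_n}\check\beta\in\dot T_n$ only gives density of $D_\beta=\{p\in P_n\mid p\Vdash_{\mathbb P}\check\beta\in\dot T\}$ below $r$, and then use $\kappa_n$-closure to find a single $r^*\le r$ in $\bigcap_{\beta\in B}D_\beta$. Your caution is well placed: in general there is no projection from $\cone r$ back to $P_n$, so density in $\mathbb P_n$ does not automatically transfer to $\mathbb P$. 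The paper's assertion is nonetheless justifiable, because when one ``decides'' $C\cap\dot T_n$ by a length-$|C|$ descending construction in $\mathbb P_n$, at each stage one may (and should) choose the extension to actually lie in $D_{\gamma_\xi}$ whenever possible, so that the resulting $r$ already satisfies $r\Vdash_{\mathbb P}\check\beta\in\dot T$ for every $\beta\in B$. Your explicit extraction of $r^*$ accomplishes the same thing and is arguably cleaner; either way the proofs coincide.
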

\begin{proof}
Towards a contradiction, suppose that there exists $q\le r^\star$ such that
$q\not\Vdash_{\mathbb{P}_{\lh(q)}}``\dot{T}_{\lh(q)}\text{ is nonstationary}"$.
Consequently, we may pick $p\le^0 q$ such that $p\Vdash_{\mathbb{P}_{n}}``\dot{T}_{n}\text{ is stationary}"$, for $n:=\lh(q)$.
Let $G_n$ be $\mathbb{P}_n$-generic with $p\in G_n$.
As $V[G_n]\models\refl(1,E^\mu_{\omega},E^\mu_{<\kappa_n})$,
let us fix $p'\le^0 p$ in $G_n$, and some $\delta\in E^\mu_{<\kappa_n}$ of uncountable cofinality such that $p'\Vdash_{\mathbb{P}_n}``\dot{T}_n\cap\delta\text{ is stationary}"$.
As $\mathbb P_n$ is $\kappa_n$-closed, $\delta\in\Gamma$.
In $V$, pick a club $C\s\delta$ of order type $\cf(\delta)$. Note that $|C|<\kappa_n$.
Then, $p'\Vdash_{\mathbb{P}_n}``\dot{T}_n\cap C\text{ is stationary in }\delta"$. Working for a moment in $V[G_n]$, write
$A:=C\cap (\dot{T}_n)_{G_n}$. 
Since $\mathbb{P}_n$ is $\kappa_n$-closed, we may find $r\in P_n$ extending $p'$ that
decides $A$  to be some ground model stationary subset $B$ of $\delta$.
Namely, $$r\Vdash_{\mathbb{P}_n}``\dot{T}_n\cap\delta\text{ contains the stationary set }\check B".$$

By definition of the name $\dot T_n$, we have that $r\Vdash_{\mathbb{P}}\check B\s\dot{T}\cap\delta$.
Finally, as $\otp(B)<\kappa$, we infer from Lemma~\ref{l14}\eqref{C1l14} that $B$ remains stationary in any forcing extension by $\mathbb P$.
So, $r\le p'\le p\le q\le r^\star$, and $r\Vdash_{\mathbb{P}}``\dot{T}\cap\delta\text{ is stationary}"$,
contradicting the fact that $r^\star$ forces $\dot{T}$ to not reflect in $\Gamma$.
\end{proof}

Suppose $r^\star\in P$ forces that $\dot T$ is a $\mathbb P$-name for a stationary subset $T$ of $(E^\mu_\omega)^V$ that does not reflect in $\Gamma$.
We shall devise a $\Sigma$-Prikry notion of forcing $(\mathbb A,\lh_{\mathbb A},c_{\mathbb A})$
such that $\mathbb A=\mathbb A(\mathbb P,\dot{T})$ projects to $\mathbb P$ and kills the stationarity of $T$. 
Moreover, $(\mathbb A,\lh_{\mathbb A},c_{\mathbb A})$ will admit a forking projection to $(\mathbb P,\lh,c)$ with the mixing property.

Here goes.
For all $n<\omega$, write $\dot{T}_n:=\{(\check\alpha,p)\mid (\alpha,p)\in E^\mu_\omega\times P_n\ \&\ p\Vdash_{\mathbb P}\check\alpha\in\dot{T}\}$.
Let $I:=\omega\setminus\lh(r^\star)$.
By Lemma~\ref{cor9}, for all $q\le r^\star$ with $\lh(q)\in I$, $q\Vdash_{\mathbb{P}_{\lh(q)}}``\dot{T}_{\lh(q)}\text{ is nonstationary}"$.
Thus, for each $n\in I$, we may pick a $\mathbb P_n$-name $\dot{C}_n$ for a club subset of $\mu$ such that,
 for all $q\le r^\star$ with $\lh(q)=n$, $$q\Vdash_{\mathbb P_n}\dot T_n\cap\dot C_n=\emptyset.$$
Consider the binary relation $R$ as defined in Lemma~\ref{lemma9} (page~\pageref{definitionOFr}) with respect to $\langle \dot C_n\mid n\in I\rangle$.
A moment reflection makes it clear that, for all $(\alpha,q)\in R$, $q\Vdash_\mathbb{P}\check\alpha\notin\dot{T}$.

\begin{definition}\label{labeled-p-tree} Suppose $p\in P$.
A \emph{labeled $p$-tree} is a function $S:W(p)\rightarrow[\mu]^{<\mu}$ such that for all $q\in W(p)$:
\begin{enumerate}
\item\label{C1ptree} $S(q)$ is a closed bounded subset of $\mu$;
\item\label{C2ptree} $S(q')\supseteq S(q)$ whenever $q'\le q$;
\item\label{C3ptree} $q\Vdash_{\mathbb P} S(q)\cap\dot{T}=\emptyset$;
\item\label{d162}
\label{C4ptree} for all $q'\le q$ in $W(p)$, either $S(q')=\emptyset$ or $(\max(S(q')),q)\in R$.
\end{enumerate}
\end{definition}

\begin{definition}\label{strategy}
For $p\in P$, we say that $\vec S=\langle S_i\mid i\leq\alpha\rangle$ is a \emph{$p$-strategy} iff all of the following hold:
\begin{enumerate}
\item\label{C1pstrategy} $\alpha<\mu$;
\item\label{i3}
\label{C2pstrategy} $S_i$ is a labeled $p$-tree for all $i\leq\alpha$;
\item\label{C3pstrategy} for every $i<\alpha$ and $q\in W(p)$, $S_{i}(q)\sqsubseteq S_{i+1}(q)$;
\item\label{C4pstrategy} for every $i<\alpha$ and a pair $q'\le q$ in $W(p)$,  $(S_{i+1}(q)\setminus S_i(q))\sqsubseteq (S_{i+1}(q')\setminus S_i(q'))$;
\item\label{C5pstrategy} for every  limit $i\leq\alpha$ and $q\in W(p)$, $S_i(q)$ is the ordinal closure of $\bigcup_{j<i}S_j(q)$.
In particular, $S_0(q)=\emptyset$ for all $q\in W(p)$.
\end{enumerate}
\end{definition}

This section centers around the following notion of forcing.
\begin{definition}\label{d20}
Let $\mathbb{A}(\mathbb{P}, \dot{T})$ be the notion of forcing $\mathbb{A}:=(A,\unlhd)$, where:
\begin{enumerate}
\item
\label{C1d20} $(p,\vec S)\in A$ iff $p\in P$, and $\vec S$ is either the empty sequence, or a $p$-strategy;
\item
\label{C2d20} $(p', \vec{S'})\unlhd(p, \vec S)$ iff:
\begin{enumerate}
\item
\label{C2ad20} $p'\le p$;
\item
\label{C2bd20} $\dom(\vec{S'})\geq \dom(\vec S)$;
\item
\label{C2cd20} $S'_i(q)=S_i(w(p,q))$ for all $i\in \dom(\vec S)$ and $q\in W(p')$.
\end{enumerate}
\end{enumerate}

For all $p\in P$, denote $\myceil{p}{\mathbb A}:=(p,\emptyset)$.
\end{definition}
\begin{remark}
The relation $\unlhd$  is well-defined as $w(p,q)\in W(p)$, the domain of the $p$-labeled trees $S_i$.
\end{remark}

It is easy to see that $\one_\mathbb A=\myceil{\one_\mathbb P}{\mathbb A}$.

\begin{lemma}\label{lemma35} For every $\nu\ge\mu$, if $\mathbb P$ is a subset of $H_{\nu}$, then so is $\mathbb A$.
\end{lemma}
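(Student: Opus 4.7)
The plan is to unfold the definitions and carry out a straightforward cardinal-arithmetic calculation. Fix $\nu\geq\mu$ with $P\s H_\nu$, and let an arbitrary $(p,\vec S)\in A$ be given; I must show this ordered pair lies in $H_\nu$. Since $p\in P\s H_\nu$ and ordered pairs of elements of $H_\nu$ are again in $H_\nu$, it reduces to verifying that $\vec S\in H_\nu$.

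If $\vec S$ is the empty sequence, this is immediate, so assume $\vec S=\langle S_i\mid i\le\alpha\rangle$ is a $p$-strategy with $\alpha<\mu$. Three bounds drive everything. First, by Definition~\ref{labeled-p-tree}(\ref{C1ptree}), each $S_i(q)$ is a closed bounded subset of $\mu$, so $S_i(q)\in H_\mu\s H_\nu$. Second, every $q\in W(p)$ belongs to $P\s H_\nu$, so $q\in H_\nu$. Third, by Definition~\ref{SigmaPrikry}(\ref{csize}) we have $|W(p)|<\mu$, and the length $\alpha+1$ of $\vec S$ is likewise strictly below $\mu$ (since $\mu$ is an uncountable cardinal and $\alpha<\mu$).

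Assembling these bounds, each ordered pair $(q,S_i(q))$ is a finite set built from elements of $H_\nu$, so lies in $H_\nu$; hence each $S_i=\{(q,S_i(q))\mid q\in W(p)\}$ is a set of fewer than $\mu\leq\nu$ elements of $H_\nu$, so $S_i\in H_\nu$. Finally, $\vec S$ coded as $\{(i,S_i)\mid i\le\alpha\}$ is a set of fewer than $\mu\leq\nu$ elements of $H_\nu$, giving $\vec S\in H_\nu$, and hence $(p,\vec S)\in H_\nu$ as desired.

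I do not foresee a genuine obstacle here; the argument is pure book-keeping. The only point that requires minor care is the closure claim ``a collection of fewer than $\mu$ elements of $H_\nu$ is in $H_\nu$''. This is a standard consequence of the regularity of $\mu$, which is granted for free by the running assumption $\mu^{<\mu}=\mu$ made at the beginning of Section~\ref{killingone}.
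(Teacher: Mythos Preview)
Your proof is correct and follows essentially the same bookkeeping as the paper's: both reduce to the facts that $p\in H_\nu$, each $S_i(q)\in[\mu]^{<\mu}\s H_\mu$, $|W(p)|<\mu$ by Clause~(\ref{csize}), and $\dom(\vec S)<\mu$, then assemble. One small quibble: your closing justification that ``regularity of $\mu$'' suffices for the closure of $H_\nu$ under ${<}\mu$-sized collections is only literally correct when $\nu=\mu$; for general $\nu>\mu$ one needs $\cf(\nu)\ge\mu$, but the paper is equally informal on this point and the intended application ($\nu=\mu^+$) is unproblematic.
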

\begin{proof} Suppose $\mathbb P\s H_{\nu}$ for a given $\nu\ge\mu$. To prove that $\mathbb A\s H_{\nu}$,
it suffices to show that $A\s H_{\nu}$.
Now, each element of $A$ is a pair $(p,\vec S)$, with $p\in P\s H_{\nu}$ and $\vec S\in{}^{<\mu}({}^{W(p)}[\mu]^{<\mu})$, so, as $\nu\ge\mu$,  it suffices to show that ${}^{W(p)}[\mu]^{<\mu} \s H_{\nu}$.
Any element of ${}^{W(p)}[\mu]^{<\mu}$ is a subset of $W(p)\times[\mu]^{<\mu}$ of size $|W(p)|$ and, in particular, a subset of $H_\nu\times H_\mu$ of size ${<}\mu$ because of Definition~\ref{SigmaPrikry}(\ref{csize}),
so that it is indeed an element of $H_\nu$.
\end{proof}

\begin{lemma}\label{claim244}
Suppose $(p,\vec S)\in A$, where $p$ is compatible with $r^\star$. For every $\epsilon<\mu$,
there exist $\alpha>\epsilon$ and $(q,\vec T)\unlhd(p,\vec S)$ such that, for all $r\in W(q)$, $\dom(\vec T)=\alpha+1$ and $\max(T_\alpha(r))=\alpha$.
\end{lemma}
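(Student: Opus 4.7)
The plan is to extend $p$ to some $q \le p$ (with $q \le r^\star$) that simultaneously guarantees an ordinal $\alpha > \epsilon$ and an $\omega$-sequence $\beta_0 < \beta_1 < \cdots$ cofinal in $\alpha$ such that $(\alpha,q)$ and every $(\beta_n,q)$ belong to $R$, and then to build $\vec T$ of length $\alpha+1$ on $W(q)$ by inheriting $\vec S$ on indices below $\eta := \dom(\vec S)$ and sequentially appending the $\beta_n$'s at the next $\omega$-many indices. The mandatory ordinal-closure at the limit index $\omega$ beyond $\eta$ will then pull in $\alpha = \sup_n \beta_n$ automatically, after which $T_i$ remains constant up to $i=\alpha$, yielding $\max T_\alpha(r) = \alpha$ for every $r \in W(q)$ as required.

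To produce the ordinals, first set $\delta := \sup\{\max S_i(q') : q' \in W(p),\ i<\eta,\ S_i(q') \ne \emptyset\}$; this is ${<}\mu$ by Definition~\ref{SigmaPrikry}\eqref{csize}. Fix a common extension $p^* \le p, r^\star$ using the hypothesized compatibility, and iterate the density argument from the proof of Claim~\ref{claim73}(1) (itself an iterated application of Claim~\ref{claim72} inside the $\sigma$-closed poset $\mathbb P_{\lh(p^*)}$) to construct a $\le^0$-decreasing sequence $\langle q_n : n<\omega\rangle$ below $p^*$ and a strictly increasing sequence $\langle \beta_n : n<\omega\rangle$ of ordinals above $\max(\epsilon,\delta,\eta)$ with $(\beta_n, q_{n+1}) \in R$ for each $n$. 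Use $\sigma$-closure of $\mathbb P_{\lh(p^*)}$ (Definition~\ref{SigmaPrikry}\eqref{c2}) to pick a lower bound $q$, and set $\alpha := \sup_n \beta_n$; note $\alpha<\mu$ since $\cf(\mu)>\omega$. Downward closure of $R$ in the second coordinate (immediate from its definition) then yields $(\beta_n, q) \in R$ for every $n$, and since each $\dot{C}_{\lh(s)}$ is forced to be a club of $\mu$, also $(\alpha, q) \in R$.

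Now define $\vec T = \langle T_i : i \le \alpha\rangle$ on $W(q)$ by: $T_i(r) := S_i(w(p, r))$ for $i < \eta$ (the value forced by Definition~\ref{d20}\eqref{C2cd20}); setting $\eta' := \max(\eta, 1)$ and $b(r) := T_{\eta' - 1}(r)$ (interpreting $T_0(r) := \emptyset$ when $\eta=0$, as forced by Definition~\ref{strategy}\eqref{C5pstrategy}), $T_{\eta' + k}(r) := b(r) \cup \{\beta_0, \ldots, \beta_k\}$ for $0 \le k < \omega$; $T_{\eta' + \omega}(r) := b(r) \cup \{\beta_n : n<\omega\} \cup \{\alpha\}$, which is precisely the ordinal closure of $\bigcup_{j<\eta'+\omega} T_j(r)$; and $T_i(r) := T_{\eta' + \omega}(r)$ for $\eta' + \omega < i \le \alpha$ (consistent with the limit-closure condition since the union is eventually constant at an already-closed set). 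I expect the bulk of the verification to be routine: closedness of $T_{\eta'+\omega}(r)$ hinges on $\alpha$ being the supremum of the $\beta_n$'s and on $\beta_0 > \delta$ placing the new ordinals strictly above the inherited part; the $\dot T$-avoidance clause \eqref{C3ptree} follows from $(\beta_n, r), (\alpha, r) \in R$ (which force $r \Vdash \beta_n, \alpha \notin \dot T$) combined with the inherited property from $\vec S$ applied through $r \le w(p, r)$; the coherence clauses \eqref{C2ptree} and \eqref{C4ptree} propagate from those of $\vec S$ via $w(p, r') \le w(p, r)$ for $r' \le r$ in $W(q)$ (Definition~\ref{SigmaPrikry}\eqref{itsaprojection}) together with downward closure of $R$; and the strategy clauses \eqref{C3pstrategy}--\eqref{C5pstrategy} are essentially immediate since the ``new part'' at each successor stage beyond $\eta$ is the singleton $\{\beta_k\}$, independent of $r$. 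The subtle point the construction must address is that the density machinery only delivers witnesses $(\alpha, q) \in R$ for $\alpha$ of cofinality $\omega$ (as a supremum of an $\omega$-sequence of club-points), so $\alpha$ cannot be introduced as a fresh top element at a successor index of $\vec T$; staging the $\beta_n$'s at indices $\eta', \eta'+1, \ldots$ and letting the mandatory closure at $\eta'+\omega$ absorb $\alpha$ is precisely what sidesteps this.
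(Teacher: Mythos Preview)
Your proof is correct, but the paper takes a considerably simpler route. After passing to a common extension $p' \le p, r^\star$ and pulling $\vec S$ back to a $p'$-strategy $\vec{S'}$, the paper invokes the density argument of Claim~\ref{claim73}(1) just \emph{once} to obtain a single pair $(\alpha,q)\in R$ with $q\le p'$ and $\alpha$ above both $\epsilon$ and $\sup\{\max S'_i(r):r\in W(p'),\ i\in\dom(\vec{S'})\}$. It then defines $T_i(r):=S'_{\min\{i,\delta\}}(w(p',r))\cup\{\alpha\}$ for $i>\delta$ (where $\delta+1=\dom(\vec{S'})$), i.e.\ it simply appends the single ordinal $\alpha$ at stage $\delta+1$ and holds constant thereafter. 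All the strategy clauses are then immediate.

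Your perceived ``subtle point'' --- that $\alpha$, having cofinality $\omega$, cannot be introduced at a successor index --- is not actually a constraint. Nothing in Definitions~\ref{labeled-p-tree} or~\ref{strategy} restricts the cofinality of ordinals added at successor stages; the set $S'_\delta(w(p',r))\cup\{\alpha\}$ is closed because $\alpha$ lies strictly above $S'_\delta(w(p',r))$, and the limit-closure clause~\eqref{C5pstrategy} is satisfied at every limit $i\in(\delta,\alpha]$ since the sequence is eventually constant at an already-closed set. So your staging of the $\beta_n$'s over $\omega$-many indices, while correct, is unnecessary work: the paper's one-step append achieves the same conclusion with a fraction of the verification.
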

\begin{proof} Fix $p'\le p,r^\star$.
Define a $p'$-strategy $\vec S'$ with $\dom(\vec{S})=\dom(\vec{S}')$ using Clause~\eqref{C2cd20} of Definition~\ref{d20}, $(p',\vec S')\unlhd (p,\vec S)$.
Next, let $\epsilon<\mu$ be arbitrary. Since $(\mathbb P,\lh,c)$ is $\Sigma$-Prikry, we infer from Definition~\ref{SigmaPrikry}(\ref{csize}) that $|W(p')|<\mu$.
Thus, by possibly extending $\epsilon$, we may assume that $S'_i(q)\s\epsilon$, for all $q\in W(p')$ and $i\in\dom(\vec S')$.

Assume for a moment that $\vec S'\neq \emptyset$ and write $\delta+1:=\dom(\vec S')$.
As $p'\le r^\star$, by the very same proof of Claim~\ref{claim73}(1), we may fix $(\alpha,q)\in R$ with $\alpha>\delta+\epsilon$ and $q\le p'$.
Define $\vec T=\langle T_i:W(q)\rightarrow[\mu]^{<\mu}\mid i\leq\alpha\rangle$ by letting for all $r\in W(q)$ and $i\in\dom(\vec T)$:
$$T_i(r):=\begin{cases}
S'_i(w(p',r)),&\text{if }i\leq\delta;\\
S'_{\delta}(w(p',r))\cup\{\alpha\},&\text{otherwise}.
\end{cases}$$
It is easy to see that $T_i$ is a labeled $q$-tree for each $i\leq \alpha$. By Definitions \ref{strategy} and \ref{d20},
we also have that $(q,\vec T)$ is a condition in $\mathbb A$ and $(q,\vec T)\unlhd(p',\vec S')\unlhd(p,\vec S)$.
Altogether, $\alpha$ and $(q,\vec T)$ are as desired.

In case $\vec{S}=\emptyset$, arguing as before we may find $(\alpha,q)\in R$ with $\alpha>\epsilon$ and $q\le p'$.
Define $\vec{T}=\langle T_i: W(q) \rightarrow[\mu]^{<\mu}\mid i\leq\alpha\rangle$ by letting for all $r\in W(q)$ and $i\in\dom(\vec T)$:
$$T_i(r):=\begin{cases}
\emptyset,&\text{if }i=0;\\
\{\alpha\},&\text{otherwise}.
\end{cases}$$
It is clear that $\vec{T}$ is a $q$-strategy and that $(q,\vec{T})$ is as desired.
\end{proof}

\begin{theorem}\label{purpose}  $(r^\star,\emptyset)\Vdash_{\mathbb A}``\dot{T}\text{ is nonstationary}"$.
\end{theorem}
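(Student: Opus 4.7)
The plan is to exhibit, in any $\mathbb A$-generic $V[G]$ with $(r^\star,\emptyset)\in G$, an explicit club $C\s\mu$ disjoint from $T:=\dot T^{\pi[G]}$. Writing $H:=\pi[G]$, the natural candidate is
\[ C:=\bigcup\{\,S_i(q)\mid (p,\vec S)\in G,\ \vec S\neq\emptyset,\ i\in\dom(\vec S),\ q\in W(p)\cap H\,\}. \]
Disjointness $C\cap T=\emptyset$ is immediate from Definition~\ref{labeled-p-tree}\eqref{C3ptree}, since each such $q\in H$ forces $S_i(q)\cap\dot T=\emptyset$. Unboundedness follows from Claim~\ref{claim244}: for each $\epsilon<\mu$, the set of $(q,\vec T)\unlhd(r^\star,\emptyset)$ witnessing the claim at threshold $\epsilon$ is dense, so genericity yields some such $(q,\vec T)\in G$ with a fresh top ordinal $\alpha>\epsilon$ lying in $T_\alpha(q)\s C$ (using $q\in W_0(q)\cap H$).

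The main obstacle is closedness of $C$. I plan to prove that for each limit $\alpha<\mu$, the set
\[ E_\alpha:=\{(p,\vec S)\unlhd(r^\star,\emptyset)\mid (\exists i\in\dom(\vec S))\,\alpha\in S_i(p)\}\,\cup\,\{(p,\vec S)\mid (p,\vec S)\Vdash_{\mathbb A}\check\alpha\notin\acc(\dot C)\} \]
is dense below $(r^\star,\emptyset)$. Granting this, if $\alpha\in\acc(C)$ in $V[G]$, then some $(p,\vec S)\in G\cap E_\alpha$ cannot land in the second disjunct, so $\alpha\in S_i(p)$ for some $i\in\dom(\vec S)$; since $p\in W_0(p)\cap H$, this gives $\alpha\in C$, closing $C$ at $\alpha$.

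To show density of $E_\alpha$, fix $(p_0,\vec S_0)\unlhd(r^\star,\emptyset)$ and assume no $\unlhd$-extension forces $\alpha\notin\acc(\dot C)$. I would recursively construct a $\unlhd$-decreasing chain $\langle(p_\xi,\vec S_\xi)\mid\xi\leq\eta\rangle$ together with a strictly increasing cofinal-in-$\alpha$ sequence $\langle\beta_\zeta\mid\zeta<\eta\rangle$: at a successor $\zeta+1$, apply Claim~\ref{claim244} with $\epsilon=\beta_\zeta$ to produce $(p_{\zeta+1},\vec S_{\zeta+1})\unlhd(p_\zeta,\vec S_\zeta)$ whose new top ordinal $\beta_{\zeta+1}$ satisfies $\max S_{\dom(\vec S_{\zeta+1})-1}(r)=\beta_{\zeta+1}$ for every $r\in W(p_{\zeta+1})$, chosen---using our standing assumption---so that $\beta_{\zeta+1}<\alpha$; at a limit $\xi$, take a lower bound produced by the directed-closure of $\mathbb A$'s fixed-length slice $\mathbb A_{\lh(p_0)}$, whose stage-$\xi$ tree is, by Definition~\ref{strategy}\eqref{C5pstrategy}, the ordinal closure of the accumulated trees and hence contains $\sup_{\zeta<\xi}\beta_\zeta$. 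Selecting $\eta$ with $\sup_{\zeta<\eta}\beta_\zeta=\alpha$ places $\alpha\in S_\eta(p_\eta)$ and lands in the first disjunct.

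The hard part will be controlling the recursion at limit stages. First, the existence of the required lower bound rests on $\mathbb A$'s fixed-length slice being sufficiently directed-closed---this is part of verifying $\mathbb A$ itself is $\Sigma$-Prikry, handled by applying Lemma~\ref{313} via the forking projection. Second, the labeled-tree axiom~\eqref{C4ptree} of Definition~\ref{labeled-p-tree} demands that every maximum of every $S(q')$ belong to $\dot C_{\lh(r)}$ for the appropriate $r$; each $\beta_\zeta$ enters via Claim~\ref{claim244} and so lies in $R$, and ordinal limits preserve the $R$-membership of such suprema because the names $\dot C_n$ are forced to be clubs, so the lower-bound strategy at each limit remains a legitimate $\mathbb A$-condition. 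Once these coherence points are settled, the density of $E_\alpha$, and with it the theorem, follows.
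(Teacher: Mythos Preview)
Your plan has a genuine gap at the successor step of the density recursion. You invoke Lemma~\ref{claim244} with threshold $\epsilon=\beta_\zeta$ to produce a new top ordinal $\beta_{\zeta+1}>\beta_\zeta$, and then assert that the standing assumption (that no extension of $(p_0,\vec S_0)$ forces $\alpha\notin\acc(\dot C)$) lets you arrange $\beta_{\zeta+1}<\alpha$. But Lemma~\ref{claim244} offers no upper bound on its output: its proof works by locating a pair $(\gamma,q)\in R$ with $\gamma$ above the threshold, and there is no mechanism for forcing $\gamma<\alpha$. The standing assumption is a statement about the name $\dot C$, which depends on the entire generic; it does not translate into the existence of $R$-pairs $(\gamma,q)$ with $\gamma$ in a prescribed interval below $\alpha$. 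Indeed, for $\alpha\in E^\mu_\omega$ the set of $\gamma<\alpha$ with $(\gamma,q)\in R$ for some $q\le^0 p_\zeta$ need not be cofinal in $\alpha$: the relation $R$ encodes membership in an intersection of $\omega$ many clubs in $\mu$, and such a set carries no local density below a point of countable cofinality. If instead you try to bypass Lemma~\ref{claim244} and extract witnesses directly from the forcing assumption---finding $(q,\vec T)\unlhd(p_\zeta,\vec S_\zeta)$ and $\gamma\in(\beta_\zeta,\alpha)$ with $\gamma\in T_i(r)$ for some $r\in W(q)$ forced into $\dot H$---then $r$ may sit at positive depth in $W(q)$, and pushing $\gamma$ to the root forces $\lh(q)$ to grow. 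Over $\omega$ many steps the lengths may diverge, blocking the limit-stage lower bound you need. So neither route closes, and in fact your set $C$ is \emph{not} closed in general.

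The paper's argument avoids this entirely. Rather than try to show $C$ is closed, it passes to closures $D_\alpha:=\cl(d_{a_\alpha})$ from the outset, shows the $D_\alpha$ are $\sq$-increasing (so their union $D$ is automatically closed), and then proves separately---and this is the real content---that each $\cl(d^i_a)$ is disjoint from $T$. That disjointness is established by induction on $i$: any $\delta\in\cl(d^{i+1}_a)\setminus d^{i+1}_a$ of countable cofinality is shown to equal $\sup_n\max(S_{i+1}(p_n))$ along the generic branch $\langle p_n\rangle$ through $W(p)$, and then Clause~\eqref{C4ptree} of Definition~\ref{labeled-p-tree} gives $(\max(S_{i+1}(p_m)),p_n)\in R$ for all relevant $m,n$, whence by closure of the $\dot C_n$'s one gets $(\delta,p_n)\in R$, so $\delta$ lands in each $\dot C_n$ and hence outside each $\dot T_n$, hence outside $T$. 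This $R$-limit argument is precisely the missing idea in your approach: it shows accumulation points of $C$ avoid $T$ without ever needing to insert them into any condition's strategy.
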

\begin{proof}
Let $G$ be $\mathbb A$-generic over $V$, with $(r^\star,\emptyset)\in G$. Work in $V[G]$. Let $\bar{G}$ be the induced generic for $\mathbb{P}$ via $\pi$,
so that $r^\star\in\bar G$.

For all $a=(p,\vec S)$ in $G$ and $i\in\dom(\vec S)$, write $d^i_a:=\bigcup\{ S_{i}(q)\mid q\in\bar G\cap W(p)\}$.
Then, let
$$d_a:=\begin{cases}
d_a^{\max(\dom(\vec S))},&\text{if }\vec S\neq\emptyset;\\
\emptyset,&\text{otherwise}.\end{cases}$$

\begin{claim} Suppose that $a=(p,\vec S)$ is an element of $G$.

In $V[\bar G]$, for all $i\in\dom(\vec S)$, the ordinal closure $\cl(d^i_a)$ of $d^i_a$ is disjoint from $T$.
\end{claim}
\begin{proof} Work in $V[\bar G]$.
By Lemma~\ref{lemma7}(1), for all $n<\omega$, there exists a unique element in $\bar G\cap W_n(p)$, which we shall denote by $p_n$.
By Lemma~\ref{lemma7}(2), it follows that $\langle p_n\mid n<\omega\rangle$ is $\le$-decreasing and then, by Definition~\ref{labeled-p-tree},
for each $i\in\dom(\vec S)$, $\langle S_i(p_n)\mid n<\omega\rangle$ is a weakly $\s$-increasing (though, not $\sqsubseteq$-increasing)  sequence of closed sets that converges to $d^i_a$.

We now argue by induction on $i\in\dom(\vec S)$. The base case is trivial, since $d^0_a=\emptyset$.

Now, suppose that the claim holds for a given $i<\max(\dom(\vec S))$, and let us prove it for $i+1$.
Let $\delta\in\cl(d^{i+1}_a)\setminus \cl(d^i_a)$ be arbitrary.
We have to verify that $\delta\notin T$.
By Clauses \eqref{C3ptree} and \eqref{C4ptree} of Definition~\ref{labeled-p-tree}, we may assume that $\delta\in\cl(d^{i+1}_a)\setminus d^{i+1}_a$.
In particular, as $d^{i+1}_a$ is the countable union of closed sets, we have $\cf(\delta)=\omega$.

\begin{subclaim} There exists a sequence $\langle\delta_n\mid n\in N\rangle$ of ordinals in $\delta$
such that:
\begin{itemize}
\item $N\in[\omega]^\omega$;
\item $\sup_{n\in N}\delta_n=\delta$;
\item for every $n\in N$, $n=\min\{{\bar n}<\omega\mid \delta_n\in S_{i+1}(p_{\bar n})\setminus S_i(p_{{\bar n}})\}$.
\end{itemize}
\end{subclaim}
\begin{proof}

Since $\delta\in\cl(d^{i+1}_a)\setminus(\cl(d^i_a)\cup d^{i+1}_a)$ and $\cf(\delta)=\omega$,
we may find a strictly increasing sequence $\langle\delta^m\mid m<\omega\rangle$
of ordinals in $d^{i+1}_a\setminus d^i_a$  such that $\sup_{m<\omega}\delta^m=\delta$.
For each $m<\omega$, let $n_m<\omega$ be the least such that $\delta^m\in S_{i+1}(p_{n_m})\setminus S_i(p_{n_m})$.
Since $S_{i+1}(p_n)$ is closed for every $n<\omega$, we get that $m\mapsto n_m$ is finite-to-one,
so that $N:=\{ n_m\mid m<\omega\}$ is infinite. For each $n\in N$,
set $m(n):=\min\{m<\omega\mid n=n_m\}$ and $\delta_n:=\delta^{m(n)}$.
Evidently,
\begin{gather*}
\min\{{\bar n}<\omega\mid \delta_n\in S_{i+1}(p_{\bar n})\setminus S_i(p_{{\bar n}})\}=\\
\min\{{\bar n}<\omega\mid \delta^{m(n)}\in S_{i+1}(p_{\bar n})\setminus S_i(p_{{\bar n}})\}=\\n_{m(n)}=n.
\end{gather*}

In particular, $\langle m(n)\mid n\in N\rangle$ is injective,
and $\sup_{n\in N}\delta_n=\delta$.
\end{proof}

Let $\langle\delta_n\mid n\in N\rangle$ be given by the subclaim.
By Definition~\ref{strategy}\eqref{C3pstrategy}, for all $n<m<\omega$, we have $(S_{i+1}(p_n)\setminus S_i(p_n))\sqsubseteq (S_{i+1}(p_m)\setminus S_i(p_m))$,
and hence $\delta=\sup_{n\in N}\sup(S_{i+1}(p_n)\setminus S_i(p_n))$.
Recalling that $S_{i}(p_n)\sqsubseteq S_{i+1}(p_n)$ for all $n<\omega$,
we conclude that $$\delta=\sup_{n\in N}\max(S_{i+1}(p_n)).$$

By Definition~\ref{labeled-p-tree}\eqref{C4ptree}, we have $(\max(S_{i+1}(p_m)),p_n)\in R$ for all $n\in N$ and $m\geq n$.
So, since, for each $m\in I$, $\dot{C}_{m}$ is a $\mathbb P_m$-name for a club,
we infer that  $(\delta,p_n)\in R$ for all $n\in N$.
Recalling the definition of $R$ and the fact that $I=\omega\setminus\lh(r^\star)$, we infer that,
for every $n\geq\min(N)$, $p_n\le r^\star$, and
$$p_n\Vdash_{\mathbb{P}_{n}}\check\delta\in \dot{C}_{n}.$$
Now, for every $n\geq\min(N)$, by the very choice of $\dot C_n$ and since $p_n\le r^\star$,
$p_n\Vdash_{\mathbb P_n}\dot T_n\cap\dot C_n=\emptyset$.
Altogether, for a tail of $n<\omega$,
$$p_n\Vdash_{\mathbb{P}_{n}}\check\delta\notin \dot{T}_{n}.$$
It thus follows from the definition of $\langle \dot{T_n}\mid n<\omega\rangle$ and the fact that $\{p_n\mid n<\omega\}\s\bar G$, that $\delta\notin T$.

Finally, suppose $i\in\acc^+(\dom(\vec S))$, and that the claim holds below $i$.
Let $\delta\in\cl(d^i_a)\setminus d^i_a$ be arbitrary.
By the previous analysis, it is clear that we may
pick $N\in[\omega]^\omega$  and an increasing sequence of ordinals $\langle \delta_n\mid n\in N\rangle$ that converges to $\delta$,
such that $\delta_n\in S_{i}(p_n)$ for all $n\in N$.
By the last clause of Definition~\ref{strategy}, for each $n\in N$,
we may let $j_n<i$ be the least for which there exists  $\delta'_n\in S_{j_n+1}(p_n)$ with $\delta_n\geq\delta'_n>\sup\{\delta_m\mid m\in N\cap n\}$.

If $\sup_{n\in N}j_n<i$, then by the induction hypothesis, $\delta\notin T$, and we are done.
Suppose that $\sup_{n\in N}j_n=i$. By thinning $N$ out, we may assume that $n\mapsto j_n$ is strictly increasing over $N$.
In particular, for all $m<n$ both from $N$, we have $\delta_m'\in S_{j_m+1}(p_m)\s S_{j_n}(p_m)\s S_{j_n}(p_n)\sqsubseteq S_{j_n+1}(p_n)$,
so that $\delta_m'\leq\max(S_{j_n}(p_n))\leq\delta_n'$.
Altogether,  $\delta=\sup_{n\in N}\max(S_{j_n}(p_n))$.
By Definition~\ref{labeled-p-tree}\eqref{C4ptree}, we have $(\max(S_{j_n}(p_m)),p_n)\in R$ whenever $n\in N$ and $m\in\omega\setminus n$.
Thus, as in the successor case, we have $(\delta,p_n)\in R$ for all $n\in N$, and hence  $\delta\notin T$.
\end{proof}

By appealing to Lemma~\ref{claim244}, we now fix a sequence $\langle a_\alpha\mid\alpha<\mu\rangle$
of conditions in $G$ such that, for all $\alpha<\mu$, letting $(p,\vec S):=a_\alpha$, we have $\dom(\vec S)=\alpha+1$.
Denote $D_\alpha:=\cl(d_{a_\alpha})$. By the preceding claim and regularity of $\mu$ we infer:\footnote{See Corollary~\ref{cor523}.}
\begin{claim}\label{c242} For every $\alpha<\mu$, $D_\alpha$ is a closed bounded subset of $\mu$, disjoint from $T$.\qed
\end{claim}
\begin{claim}\label{c241} For every $\alpha<\mu$ and $a'=(p',\vec{S'})$ in $G$ with $\dom(\vec{S'})=\alpha+1$,
$d_{a'}=d_{a_\alpha}$.
\end{claim}
\begin{proof} Denote $a_\alpha=(p,\vec S)$.
As $a_\alpha$ and $a'$ are in $G$, we may pick $(r,\vec T)$ that extends both.
In particular, $r\le p,p'$, and, for all  $q\in W(r)$,
$S_\alpha(w(p,q))=T_\alpha(q)=S'_\alpha(w(p',q))$.
Let $m:=\lh(r)-\lh(p)$. Then, for all $k<\omega$, $q\in W_k(r)\cap G$ iff $w(p,q)\in W_{m+k}(p)\cap G$. Note that these sets are singletons. Then
$$d_{a_\alpha}=\bigcup\{ S_{\alpha}(q)\mid q\in\bar G\cap W_{\geq m}(p)\}=\bigcup\{ T_{\alpha}(q)\mid q\in\bar G\cap W(r)\}.$$
Similarly, we have that $d_{a'}=\bigcup\{ T_{\alpha}(q)\mid q\in\bar G\cap W(r)\}$, and so $d_{a_\alpha}=d_{a'}$.
\end{proof}

\begin{claim}\label{claim243}  For every $\alpha<\beta<\mu$, $D_\alpha\sq D_\beta$.
\end{claim}
\begin{proof}  Let $\alpha<\beta<\mu$. It suffices to show that $d_{a_\alpha}\sq d_{a_\beta}$.
Let $(p,\vec S):=a_\beta$ and set $a:=(p,\vec S\restriction(\alpha+1))$. As $a_\beta\unlhd a$, we infer that $a\in G$.
Thus, the preceding claim yields $d_a=d_{a_\alpha}$.
Let $\langle p_n\mid n<\omega\rangle$ be the decreasing sequence of conditions such that
$p_n$ is unique element of $\bar G\cap W_n(p)$. Then:
\begin{itemize}
\item $d_{a_\alpha}=\bigcup\{ S_{\alpha}(p_n)\mid n<\omega\}$, and
\item $d_{a_\beta}=\bigcup\{ S_{\beta}(p_n)\mid n<\omega\}$.
\end{itemize}

Note that by Clauses \eqref{C3pstrategy} and \eqref{C5pstrategy} of Definition~\ref{strategy}, for all $n<\omega$, $S_\alpha(p_n)\sq S_\beta(p_n)$.
Now, let $\gamma<\mu$ be arbitrary. We consider two cases:

$\br$ If $\gamma\in d_{a_\alpha}$, then we may find $n<\omega$ such that $\gamma\in S_\alpha(p_n)$,
and as $S_\alpha(p_n)\sq S_\beta(p_n)$, we infer that $\gamma\in d_{a_\beta}$.

$\br$ If $\gamma\in d_{a_\beta}\setminus d_{a_\alpha}$,
then we first find $n<\omega$ such that $\gamma\in S_\beta(p_n)$.
In particular, $\gamma\in S_\beta(p_n)\setminus S_\alpha(p_n)$,
and as $S_\alpha(p_n)\sq S_\beta(p_n)$, this means that $\gamma\geq\sup(S_\alpha(p_n))$.
By Definition~\ref{labeled-p-tree}\eqref{C2ptree}, for all $m\geq n$, $S_\beta(p_n)\s S_\beta(p_m)$,
and so it likewise follows that, for all $m\geq n$, $\gamma\geq\sup(S_\alpha(p_m))$.
By Definition~\ref{labeled-p-tree}\eqref{C2ptree}, for all $m<n$, $S_\alpha(p_m)\s S_\alpha(p_n)$,
and so $\gamma\geq\sup(S_\alpha(p_n))\geq\sup(S_\alpha(p_m))$.
Altogether, $\gamma\geq\sup(d_{a_\alpha})$.
\end{proof}

\begin{claim}\label{claim584} For every $\epsilon<\mu$, there exists $\alpha<\mu$ such that $\max(D_\alpha)>\epsilon$.
\end{claim}
\begin{proof} By Lemma~\ref{claim244}, we may find $(q,\vec T)$ in $G$ and $\alpha>\epsilon$ such that, for all $r\in W(q)$, $\dom(\vec T)=\alpha+1$ and $\max(T_\alpha(r))=\alpha$.
By Claim~\ref{c241}, then, $\max(D_\alpha)=\alpha>\epsilon$.
\end{proof}
Put $D:=\bigcup\{D_\alpha\mid \alpha<\mu\}$.
By Claims \ref{c242} and \ref{claim243}, $D$ is closed subset of $\mu$, disjoint from $T$.
By Claim~\ref{claim584}, $D$ is unbounded.
So $T$ is nonstationary in $V[G]$.
\end{proof}

\begin{definition}\label{DeflenghtA}
Let $\lh_{\mathbb A}:=\lh\circ\pi$.
Denote $A_n:=\{ a\in A\mid \lh_{\mathbb A}(a)=n\}$,
$A^a_n:=\{ a'\in A\mid a'\unlhd a, \lh_{\mathbb A}(a')=\lh_{\mathbb A}(a)+n\}$,
and $\mathbb A_n:=(A_n\cup\{\one_{\mathbb A}\},\unlhd)$.
\end{definition}
\begin{definition}\label{DefCA}
Define $c_{\mathbb A}:A\rightarrow H_\mu$ by letting, for all $(p,\vec S)\in A$,
$$c_{\mathbb A}(p,\vec S):=(c(p),\{ ( i,c(q),S_i(q))\mid i\in\dom(\vec S), q\in W(p)\}).$$
\end{definition}

The rest of this section is devoted to verifying that $(\mathbb A,\lh_{\mathbb A},c_{\mathbb A})$ is a $\Sigma$-Prikry forcing that admits a forking projection to $(\mathbb P,\lh,c)$.

\begin{definition}[Projection and forking]\label{d45}\hfill
\begin{itemize}
\item Define $\pi:A\rightarrow P$ by stipulating $\pi(p,\vec S):=p$.
\item Given $a=(p,\vec S)$ in $A$, define $\fork{}{a}:\cone{p}\rightarrow A$ by letting
for each $p'\le p$, $\fork{}{a}(p'):=(p',\vec{S'})$, where $\vec{S'}$ is the sequence $\langle S_i':W(p')\rightarrow[\mu]^{<\mu}\mid i<\dom(\vec{S})\rangle$ to satisfy:
\begin{equation}\label{pitchfork}
\tag{*}S'_i(q):=S_i(w(p,q))\text{  for all }i\in\dom(\vec{S'})\text{ and }q\in W(p').
\end{equation}
\end{itemize}
\end{definition}

\begin{lemma}\label{pitchforkWellDefined} Let $a\in A$ and $p'\le\pi(a)$.
Then $\fork{}{a}(p')\in A$ and $\fork{}{a}(p')\unlhd a$, so that $\fork{}{a}$ is a well-defined function from $\cone{\pi(a)}$ to $\conea{a}$.
\end{lemma}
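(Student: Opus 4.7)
The plan is straightforward unfolding of definitions. Write $a = (p, \vec S)$. If $\vec S = \emptyset$, then by (\ref{pitchfork}) the resulting $\vec{S'}$ is also empty, so $\fork{}{a}(p') = (p', \emptyset) \in A$ by Definition~\ref{d20}(\ref{C1d20}), and $(p', \emptyset) \unlhd (p, \emptyset)$ holds by Definition~\ref{d20}(\ref{C2d20}), since Clauses~(\ref{C2bd20}) and (\ref{C2cd20}) are vacuous. So assume henceforth that $\vec S$ is a genuine $p$-strategy.

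The driving observation is that by Definition~\ref{SigmaPrikry}(\ref{itsaprojection}), the map $q \mapsto w(p, q)$ is an order-preserving function from $W(p')$ into $W(p)$. Applying this, I would first verify that each $S_i': W(p') \to [\mu]^{<\mu}$ is a labeled $p'$-tree in the sense of Definition~\ref{labeled-p-tree}, which at once handles Clause~(\ref{C2pstrategy}) of Definition~\ref{strategy}. Clauses~(\ref{C1ptree}) and (\ref{C2ptree}) of Definition~\ref{labeled-p-tree} follow immediately by pulling back along $w(p, \cdot)$; Clause~(\ref{C3ptree}) uses that $q \le w(p, q)$, so $w(p, q) \Vdash_{\mathbb P} S_i(w(p, q)) \cap \dot T = \emptyset$ is inherited by $q$.

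The subtlest point---and the step I would flag as the main obstacle---is Clause~(\ref{C4ptree}): given $q'' \le q'$ in $W(p')$ with $S_i'(q'') \neq \emptyset$, I need $(\max(S_i'(q'')), q') \in R$. Invoking Clause~(\ref{C4ptree}) for $S_i$ on the pair $w(p, q'') \le w(p, q')$ in $W(p)$ only yields $(\max(S_i(w(p, q''))), w(p, q')) \in R$, which concerns $w(p, q')$ rather than $q'$. To bridge this, I would unpack the definition of $R$ on page~\pageref{definitionOFr} and observe that its second coordinate is downward-closed: if $(\alpha, q_0) \in R$ and $q_1 \le q_0$, then $q_1 \le r^\star$ and any $r \le q_1$ also extends $q_0$, so the universal quantifier inside $R$ transfers. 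Since $q' \le w(p, q')$, downward closure then supplies $(\max(S_i'(q'')), q') \in R$, as required.

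Having established that each $S_i'$ is a labeled $p'$-tree, Clauses~(\ref{C1pstrategy}), (\ref{C3pstrategy}), (\ref{C4pstrategy}), and (\ref{C5pstrategy}) of Definition~\ref{strategy} for $\vec{S'}$ reduce to the corresponding clauses for $\vec S$ evaluated at $w(p, q)$; Clause~(\ref{C4pstrategy}) again invokes order-preservation of $w(p, \cdot)$, and Clause~(\ref{C5pstrategy}) for limit $i$ is automatic since the ordinal closure of $\bigcup_{j<i} S_j(w(p, q))$ equals the ordinal closure of $\bigcup_{j<i} S_j'(q)$. Hence $\fork{}{a}(p') \in A$. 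To finish, Clauses~(\ref{C2ad20})--(\ref{C2cd20}) of Definition~\ref{d20} witnessing $\fork{}{a}(p') \unlhd a$ hold directly by construction: $p' \le p$ is assumed, $\dom(\vec{S'}) = \dom(\vec S)$, and the agreement $S_i'(q) = S_i(w(p, q))$ is literally the defining formula (\ref{pitchfork}).
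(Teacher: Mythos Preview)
Your proof is correct and follows essentially the same approach as the paper's: handle the trivial case $\vec S=\emptyset$ separately, then verify each $S_i'$ is a labeled $p'$-tree by pulling back along the order-preserving map $q\mapsto w(p,q)$, with the key step for Clause~(\ref{C4ptree}) being the downward closure of $R$ in its second coordinate combined with $q\le w(p,q)$. If anything, your write-up is slightly more thorough than the paper's, which glosses over Clause~(\ref{C4pstrategy}) of Definition~\ref{strategy} and whose verification of Clause~(\ref{C4ptree}) contains what appears to be a typo in the choice of $\gamma$.
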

\begin{proof} Set $a:=(p,\vec{S})$. If $\vec S=\emptyset$, then $\fork{}{a}(p')=\myceil{p'}{\mathbb A}$, and we are done.

Next, suppose that $\dom(\vec S)=\alpha+1$. Let $(p',\vec{S'}):=\fork{}{a}(p')$.
Let $i\leq\alpha$ and we shall verify that $S'_i$ is a $p'$-labeled tree. To this end, let $q'\le q$ be arbitrary pair of elements of $W(p')$.
\begin{itemize}
\item By Definition~\ref{SigmaPrikry}(\ref{itsaprojection}),
we have $w(p,q')\le w(p,q)$, so that $S_i'(q')=S_i(w(p,q'))\supseteq S_i(w(p,q))=S_i'(q)$.
\item As $q\le w(p,q)$, $w(p,q)\Vdash_{\mathbb P} S_i(w(p,q))\cap\dot{T}=\emptyset$, so that, since $S_i'(q)=S_i(w(p,q))$,
we clearly have $q\Vdash_{\mathbb P} S_i'(q)\cap\dot{T}=\emptyset$.
\item  To avoid trivialities, suppose that $S'_i(q')\neq\emptyset$. Write $\gamma:=\max(S_i(w(p,q))$.
As $(\gamma,w(p,q))\in R$ and $q\le w(p,q)$, we clearly have $(\gamma,q)\in R$. Recalling that $\max(S_i'(q))=\gamma$, we are done.
\end{itemize}
To prove that $(p',\vec{S}')$ is a condition in $A$ it remains to argue that $\vec{S}'$ fulfills the requirements described in Clauses~\eqref{C3pstrategy} and \eqref{C5pstrategy} of Definition~\ref{strategy} but this already follows from the definition of  $\vec{S}'$ and the fact that $\vec{S}$ is a $p$-strategy. Finally $\fork{}{a}(p')=(p',\vec{S}')\unlhd (p,\vec{S})=a$ by the very choice of $p'$ and by Definition~\ref{d45}.
\end{proof}

Let us now check that the pair of functions $(\pitchfork,\pi)$ of Definition~\ref{d45} is a forking projection from $(\mathbb{A},\ell_\mathbb{A}, c_\mathbb{A})$ to $(\mathbb{P},\ell, c)$. We prove this by going over the clauses of Definition~\ref{forking}.
\begin{lemma}\label{forkingindeed}
\begin{enumerate}
\item\label{C1forkingindeed} $\pi$ is a projection from $\mathbb A$ onto $\mathbb P$, and $\lh_{\mathbb A}=\lh\circ\pi$;
\item\label{C2forkingindeed} for all $a\in A$, $\fork{}{a}$ is an order-preserving function from $(\cone{\pi(a)},\le)$ to $(\conea{a},\unlhd)$;
\item\label{C3forkingindeed} for all $p\in P$, $(p,\emptyset)$ is the greatest element of $\{ a \in A\mid \pi(a)=p\}$;
\item\label{C4forkingindeed} for all $n,m<\omega$ and $b\unlhd^{n+m} a$, $m(a,b)$ exists and satisfies:
$$m(a,b)=\fork{}{a}({m(\pi(a),\pi(b))});$$ 
\item\label{C5forkingindeed} for all $a\in A$ and $p'\le\pi(a)$, $\pi(\fork{}{a}(p'))=p'$;
\item\label{C6forkingindeed} for all $a\in A$ and $p'\le\pi(a)$, $a=(\pi(a),\emptyset)$ iff $\fork{}{a}(p')=(p',\emptyset)$;
\item\label{C7forkingindeed} for all $a\in A$, $a'\unlhd a$ and $r\le \pi(a')$, $\fork{}{a'}(r)\unlhd\fork{}{a}(r)$;
\item\label{C8forkingindeed} for all $a,a'\in A$, if $c_{\mathbb A}(a)=c_{\mathbb A}(a')$, then $c(\pi(a))=c(\pi(a'))$ and $\fork{}{a}(r)=\fork{}{a'}(r)$ for every $r\le \pi(a),\pi(a')$.
\end{enumerate}
\end{lemma}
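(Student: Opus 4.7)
The plan is to verify the eight clauses of Lemma~\ref{forkingindeed} in order, by unwinding Definitions~\ref{d20}, \ref{DeflenghtA}, \ref{DefCA}, and \ref{d45}, with Lemma~\ref{l15} and Lemma~\ref{lemma7} as the recurring tools.

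Clauses~(1), (3), (5), and (6) are bookkeeping. For~(1): $\pi$ is order-preserving by Definition~\ref{d20}(2a), surjective since $\pi(\myceil{p}{\mathbb A})=p$, the identity $\lh_{\mathbb A}=\lh\circ\pi$ is by definition, and the projection property follows from Lemma~\ref{pitchforkWellDefined} together with clause~(5). Clause~(3) holds because the equality condition of Definition~\ref{d20}(2c) is vacuous over the empty domain, so any $a=(p,\vec S)$ satisfies $a\unlhd(p,\emptyset)$. Clauses~(5) and~(6) are direct readings of Definition~\ref{d45}, with~(6) using that $\dom(\vec{S'})=\dom(\vec S)$ in the defining formula~(\ref{pitchfork}).

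For clauses~(2) and~(7) the first two conditions of Definition~\ref{d20}(2) are immediate, and the strategy equality~(2c) reduces via a single application of Lemma~\ref{l15} to a tautology. For~(2) the reduction is $S_i(w(p,w(p',q)))=S_i(w(p,q))$ for $q\in W(p'')$; for~(7), after observing $w(r,q)=q$ for $q\in W(r)$, it becomes $S'_i(w(p',q))=S_i(w(p,w(p',q)))$, which is the defining clause of $a'\unlhd a$. Clause~(4) is similar but requires producing the candidate: given $b=(q,\vec T)\unlhd^{n+m}a=(p,\vec S)$, set $r:=\fork{}{a}(m(p,q))$. The verifications $r\unlhd^n a$ and $b\unlhd^m r$ reduce, once more via Lemma~\ref{l15}, to the equality $T_i(q'')=S_i(w(p,q''))$ coming from $b\unlhd a$. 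Maximality uses the maximality clause of Definition~\ref{SigmaPrikry}(\ref{c5}) applied in $\mathbb P$: any competitor $\bar r=(\bar r',\vec{\bar S})$ satisfies $\bar r'\le m(p,q)$, and a last application of Lemma~\ref{l15} yields the strategy equality for $\bar r\unlhd r$.

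The main obstacle is clause~(8). The first coordinate of $c_{\mathbb A}(a)=c_{\mathbb A}(a')$ yields $c(p)=c(p')$ immediately. For the second coordinate, with $a=(p,\vec S)$ and $a'=(p',\vec{S'})$, the set equality forces $\dom(\vec S)=\dom(\vec{S'})$, and, combined with the injectivity of $c$ on each of $W(p)$ and $W(p')$ furnished by Lemma~\ref{lemma7}(4), it induces a canonical bijection $\phi\colon W(p)\to W(p')$ with $c(\phi(s))=c(s)$ and $S'_i(\phi(s))=S_i(s)$ for every $s$ and every $i$. It then suffices, for $r\le p,p'$ and $q\in W(r)$, to prove the identity $\phi(w(p,q))=w(p',q)$; the formula~(\ref{pitchfork}) then delivers $\fork{}{a}(r)=\fork{}{a'}(r)$. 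Both $\phi(w(p,q))$ and $w(p',q)$ lie in the same level-$(\lh(q)-\lh(p'))$ set of $W(p')$, which by Lemma~\ref{lemma7}(1) is a maximal antichain, so the task reduces to exhibiting a common extension of these two conditions. This is the heart of the argument and the delicate point: one combines $q$ itself, which extends $w(p',q)$ by definition, with a $0$-step witness in $P^{\phi(w(p,q))}_0\cap P^{w(p,q)}_0$ supplied by Definition~\ref{SigmaPrikry}(\ref{c1}) applied to $c(\phi(w(p,q)))=c(w(p,q))$, and closes the argument with a careful tree chase within $W(p)$ and $W(p')$.
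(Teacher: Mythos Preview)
Your proposal is correct and follows essentially the same route as the paper's proof. Both proceed clause by clause, dispatch (1), (3), (5), (6) as bookkeeping, reduce (2), (4), (7) to the identity $w(p,w(p',q))=w(p,q)$ supplied by Lemma~\ref{l15}, and isolate (8) as the substantive clause.

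For clause~(8) the two presentations differ only cosmetically. The paper argues: since $c``W(p)=c``W(p')$ and $c$ is injective on each $p$-tree, let $t\in W(p')$ be the element with $c(t)=c(w(p,q))$; then $t$ is compatible with $w(p,q)$ and of the same length, and ``it is not hard to check that $t=w(p',q)$''. You make the bijection $\phi$ explicit, reduce to the same identity $\phi(w(p,q))=w(p',q)$, and describe the remaining verification as ``a careful tree chase''. Your framing via a common extension of $\phi(w(p,q))$ and $w(p',q)$ inside the antichain $W_n(p')$ is exactly the content of the paper's uniqueness assertion: $w(p',q)$ is visibly an element of $W(p')$ compatible with $w(p,q)$ (via $q$) and of the same length, so once one knows there is only one such element, the identification with $t$ is immediate. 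In short, you and the paper leave the same residual verification, phrased slightly differently; there is no substantive divergence.
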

\begin{proof}
\begin{enumerate}
\item The equality between the lengths comes from Definition \ref{DeflenghtA} so let us concentrate on proving that $\pi$ forms a projection. 
Clearly, $\pi(\one_{\mathbb A})=\one_{\mathbb P}$.
By Definition~\ref{d20}, for all $a'\unlhd a$ in $A$, we have  $\pi(a')\le\pi(a)$.
Finally, suppose that $a\in A$ and $p'\le\pi(a)$, and let us find $a'\unlhd a$ such that $\pi(a')\le p'$.
Put $a':=\fork{}{a}(p')$. Then it is not hard to check that $a'\unlhd a$ and $\pi(\fork{}{a}(p'))=p'$, so we are done.

\item Let $a=(p,\vec{S})$ be an arbitrary element of $A$.
By Lemma~\ref{pitchforkWellDefined}, $\fork{}{a}$ is a function from $\cone{\pi(a)}$ to $\conea{a}$.
To see that it is order-preserving, fix $r\le q$ below $\pi(a)$.
By Definition~\ref{d45}, $\fork{}{a}(r)=(r, \vec{R})$ and $\fork{}{a}(q)=(q, \vec{Q})$, where $\vec{R}$ and $\vec{Q}$ are as described in Definition~\ref{d45}(\ref{pitchfork}).
In particular, $\dom(\vec{R})=\dom(\vec{S})=\dom(\vec{Q})$. So, to establish that $\fork{}{a}(r)\unlhd \fork{}{a}(q)$, it suffices to verify Clause~\eqref{C2cd20} of Definition~\ref{d20}.
Let $i\in \dom(\vec{R})$ and $r'\in W(r)$ be arbitrary and notice that $\eqref{pitchfork}$ implies $R_i(r')=S_i(w(p,r'))$.
Since $r\le q$, hence $w(q,r')\in W(q)$, again by $\eqref{pitchfork}$, $Q_i(w(q,r'))=S_i(w(p,w(q,r')))$.
Using Lemma~\ref{l15}, it is the case that $Q_i(w(q,r'))=S_i(w(p,r'))$, hence $R_i(r')=Q_i(w(q,r'))$.

\item This is easy to see.

\item Write $a=(p,\vec S)$ and $b=(\bar{p},\vec{T})$.
Appealing to Definition~\ref{SigmaPrikry}\eqref{c5}, set $p':=m(p,\bar{p})$,
so that $\bar{p}\le^m p'\le^n p$.
Now, let $a':=\fork{}{a}(p')$.
By Definition~\ref{d45}, $a'$ takes the form $(p',\vec{S'})$,
where $\dom(\vec{S'})=\dom(\vec{S})$, and
$S'_i(q):=S_i(w(p,q)),\text{  for all }i\in\dom(\vec{S'})\text{ and }q\in W(p').$ Observe that if we prove $a'=m(a,b)$, i.e.,
that $a'$ is the greatest element of $\{c\in A^a_{n}\mid  c\in A^b_m\}$, we will be done with both assertions.

\begin{claim} $a'$ belongs to $\{c\in A^a_{n}\mid  c\in A^b_m\}$.
\end{claim}
\begin{proof} By Clauses \eqref{C1forkingindeed} and \eqref{C2forkingindeed} together with Clause~\eqref{C5forkingindeed} below, 
$a'$ is an element of $A^a_n$, so it suffices to show that $b\unlhd a'$.

We already know that $\bar{p}\le^m p'$ and $\dom(\vec{T})\geq\dom(\vec{S})=\dom(\vec{S'})$,
thus, by virtue of Definition~\ref{d20}, we are left with verifying that $T_i(q)=S_i'(w(p',q))$
for all $i\in \dom(\vec{S'})$ and $q\in W(\bar{p})$.

Let $i$ and $q$ be as above.
As $b\unlhd a$, we infer that $T_i(q)=S_i(w(p,q))$. 
By definition of $S_i'$ and Lemma~\ref{l15},
$S'_i(w(p',q))=S_i(w(p,w(p',q))=S_i(w(p,q))$,
so that, altogether, $T_i(q)=S'_i(w(p',q))$, as desired.
\end{proof}

\begin{claim}
$a'$ is the greatest element of $\{c\in A^a_{n}\mid  b\in A^b_m\}$. 
\end{claim}
\begin{proof}
Let $c=(r,\vec R)$ be a condition with $(\bar p,\vec{T})\unlhd^m (r,\vec{R})\unlhd^n (p,\vec{S})$.
In particular, $\bar p\le^m r\le^n p$, so that, since $p'=m(p,\bar p)$, $r\le^0 p'$.

We already know that $r\le p'$ and $\dom(\vec{R})\geq\dom(\vec{S})=\dom(\vec{S'})$.
Now, let $i\in \dom(\vec{S'})$ and $q\in W(r)$ be arbitrary.
By definition of $S_i'$ and Lemma~\ref{l15},
$S'_i(w(p',q))=S_i(w(p,w(p',q))=S_i(w(p,q))$.
As $c\unlhd a$, the latter is equal to $R_i(q)$, hence $c\unlhd a'$, as desired.
\end{proof}

\item This follows immediately from Definition~\ref{d45}.

\item Suppose that $a\in A$ with $a=(\pi(a),\emptyset)$.
By Definition~\ref{d45}(\ref{pitchfork}), for all $p'\le \pi(a)$, $\fork{}{a}(p')=(p',\emptyset)$.
Conversely, let $a:=(\pi(a), \vec{S})$ and suppose that $\fork{}{a}(q)=(q,\emptyset)$. 
Again, by Definition~\ref{d45}, $\dom(\vec{S})=\emptyset$,  and thus $a=(\pi(a),\emptyset)$, as desired.

\item  Let $a\in A$, $a'\unlhd a$ and $r\le \pi(a')$ be arbitrary,
say $a'=(p', \vec{S'})$ and $a=(p, \vec S)$. By Definition~\ref{d20}, the following three hold:
\begin{itemize}
\item $p'\le p$;
\item $\dom(\vec{S})\leq\dom(\vec{S}')$,
\item $S'_i(q)=S_i(w(p,q))$, for all $i\in \dom(\vec{S})$ and $q\in W(p')$.
\end{itemize}
By Definition~\ref{d45}, $\fork{}{a}(r):=(r,\vec{S}^a)$, where $\dom(\vec{S}^a)=\dom(\vec{S})$ and,
for all $i<\dom(\vec{S})$ and $q\in W(r)$, $S^a_i(q)=S_i(w(p,q))$.
A similar statement is valid for $\fork{}{a'}(r)=(r,\vec{S}^{a'})$.
Notice that $\dom(\vec{S}^{a'})\geq\dom(\vec{S}^{a})$ and that,
for all $i<\dom(\vec{S}^a)$ and $q\in W(r)$,
Lemma~\ref{l15} yields the following chain of equalities:
$$S^{a'}_i(q)=S'_i(w(p',q))=S_i(w(p, w(p',q)))=S_i(w(p,q))=S^a_i(q).$$
Altogether we have proved $\fork{}{a'}(r)\unlhd \fork{}{a}(r)$.
\item  Let $a=(p,\vec{S})$ and $a'=(p',\vec{S}')$ be elements of $A$ with $c_{\mathbb A}(a)=c_{\mathbb A}(a')$.
By Definition~\ref{DefCA}, then, $c(\pi(a))=c(\pi(a'))$ and $\dom(\vec{S})=\dom(\vec{S}')$.
Now, let $r\le \pi(a),\pi(a')$ be arbitrary; we shall show that $\fork{}{a}(r)=\fork{}{a'}(r)$.
Recall that $\fork{}{a}(r)=(r,\vec{T})$ and $\fork{}{a'}(r)=(r,\vec{T}')$, where $\vec{T}$ and $\vec{T}'$ are the $r$-strategy of length $\dom(\vec{S})$ given by Definition~\ref{d45}\eqref{pitchfork} with respect to $a$ and $a'$, respectively.
Therefore, it suffices to show that, for all $i\in\dom(\vec{S})$ and $q\in W(r)$, $S_i(w(p,q))=S'_i(w(p',q))$.
Let $i\in \dom(\vec{S})$ and  $q\in W(r)$ be arbitrary.
By Lemma~\ref{lemma7}(4),  $c\restriction W(p)$ is injective. Since $c_{\mathbb A}(a)=c_{\mathbb A}(a')$, Definition~\ref{DefCA} yields $c``W(p)=c``W(p')$.
Consequently, $c(w(p,q))=c(t)$, where $t$ is the unique element of $W(p')$ that is compatible with $w(p,q)$ and has the same length. Thus, it is not hard to check that $t=w(p',q)$, hence $c(w(p,q))=c(w(p',q))$.
Finally, as $c_\mathbb{A}(a)=c_\mathbb{A}(a')$ and  $c(w(p,q))=c(w(p',q))$, it is the case that $S_i(w(p,q))=S'_i(w(p',q))$.\qedhere
\end{enumerate}
\end{proof}
\begin{remark}
Note that the above proof only uses the fact that the triple $(\mathbb{P},\lh,c)$ is $\Sigma$-Prikry 
together with the defining properties of $(\mathbb{A},\ell_\mathbb{A},c_\mathbb{A})$ (that is, Definitions \ref{d20}, \ref{DeflenghtA}, \ref{DefCA} and \ref{d45}).
In particular, we have not relied on any clause of Definition~\ref{SigmaPrikry} for $(\mathbb{A},\ell_\mathbb{A},c_\mathbb{A})$, which have not yet been verified.
\end{remark}

\begin{lemma}\label{C1ASigmaPrirky} Let $n<\omega$.
Suppose that $D$ is a directed family of conditions in $\mathbb A_n$,
$|D|<\kappa_n$, and for some $\bar p$, we have $\pi(a)=\bar p$ for all $a\in D$.
Then $D$ admits a lower bound in $\mathbb A_n$.
\end{lemma}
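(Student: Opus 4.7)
The setup is as follows: each $a \in D$ has the form $(\bar p, \vec S^a)$ with $\vec S^a$ either empty or a $\bar p$-strategy of length $\alpha_a+1$. Since $w(\bar p, q) = q$ for every $q \in W(\bar p)$, Definition~\ref{d20}\eqref{C2cd20} simplifies to: whenever $b \unlhd a$ in $D$, $\alpha_b \ge \alpha_a$ and $S^b_i(q) = S^a_i(q)$ for every $i \le \alpha_a$ and $q \in W(\bar p)$. By directedness, any two strategies in $D$ pairwise agree on their common indices. If every $\vec S^a$ is empty, $(\bar p, \emptyset)$ is the required lower bound. Otherwise, set $\alpha^* := \sup\{\alpha_a \mid a \in D,\ \vec S^a \ne \emptyset\}$; since $|D| < \kappa_n \le \kappa < \mu$, $\mu$ is regular in $V$ (as $\mu^{<\mu} = \mu$), and each $\alpha_a < \mu$, we have $\alpha^* < \mu$.

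If $\alpha^*$ is attained, say by $a^* \in D$, then a brief directedness argument makes $a^*$ itself a lower bound: for any $b \in D$, pick $c \unlhd a^*, b$ in $D$; since $\alpha_c \ge \alpha_{a^*} = \alpha^* \ge \alpha_c$, $\alpha_c = \alpha_{a^*}$, and the compatibility above then forces $\vec S^c = \vec S^{a^*}$, so $a^* = c \unlhd b$. Otherwise $\alpha^*$ is a limit ordinal unattained by any $\alpha_a$, and I will construct a new lower bound $(\bar p, \vec S)$ with $\vec S = \langle S_i \mid i \le \alpha^*\rangle$: for each $q \in W(\bar p)$ and each $i < \alpha^*$, set $S_i(q) := S^a_i(q)$ for any $a \in D$ with $i \le \alpha_a$ (well-defined by compatibility), and set $S_{\alpha^*}(q) := \cl\big(\bigcup_{i<\alpha^*} S_i(q)\big)$. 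All clauses of Definitions~\ref{labeled-p-tree} and \ref{strategy} at indices $i < \alpha^*$ are inherited from the $\vec S^a$'s, clause~\eqref{C5pstrategy} of Definition~\ref{strategy} at $i = \alpha^*$ is built in, and boundedness of $S_{\alpha^*}(q)$ in $\mu$ follows from $\alpha^* < \mu$ together with the regularity of $\mu$.

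The main obstacle is verifying clauses~\eqref{C3ptree} and \eqref{d162} of Definition~\ref{labeled-p-tree} at the new level $S_{\alpha^*}$. The only nontrivial elements are the limit points added by $\cl(\cdot)$; let $\delta$ be such a point, say $\delta \in S_{\alpha^*}(q) \setminus \bigcup_{i<\alpha^*} S_i(q)$. Exploiting the $\sqsubseteq$-increasing structure of $\langle S_i(q) \mid i < \alpha^*\rangle$ granted by Definition~\ref{strategy}\eqref{C3pstrategy} and \eqref{C5pstrategy}, observe that if some $j^*$ satisfied $\max(S_{j^*}(q)) \ge \delta$, then $S_j(q) \cap \delta = S_{j^*}(q) \cap \delta$ for all $j \ge j^*$ while $S_j(q) \s S_{j^*}(q)$ for $j \le j^*$, making $\delta$ a limit point of the closed set $S_{j^*}(q)$ and forcing $\delta \in S_{j^*}(q)$, a contradiction. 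Hence $\max(S_i(q)) < \delta$ for every $i < \alpha^*$ and $\sup_i \max(S_i(q)) = \delta$. Writing $\gamma_i := \max(S_i(q))$, clause~\eqref{d162} for each $S_i$ yields $(\gamma_i, q) \in R$, so $q \le r^\star$ and for every $r \le q$ with $\lh(r) \in I$, $r \Vdash_{\mathbb P_{\lh(r)}} \check\gamma_i \in \dot C_{\lh(r)}$. Since $\dot C_{\lh(r)}$ is forced to be a club in $\mu$ and $\sup_i \gamma_i = \delta$, $r$ must also force $\check \delta \in \dot C_{\lh(r)}$, showing $(\delta, q) \in R$, whence $q \Vdash_{\mathbb P} \check \delta \notin \dot T$. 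This settles clause~\eqref{C3ptree}, and the same analysis applied to $\max(S_{\alpha^*}(q'))$ — which either already equals $\max(S_j(q'))$ for some $j < \alpha^*$ (direct application of clause~\eqref{d162} for $S_j$) or is a new limit point handled as above — yields clause~\eqref{d162} for $S_{\alpha^*}$. Finally, $(\bar p, \vec S) \unlhd a$ for every $a \in D$ is immediate from the construction and Definition~\ref{d20}.
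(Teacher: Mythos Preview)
Your proof is correct and follows essentially the same approach as the paper's: both observe that conditions in $D$ with common $\pi$-image agree on common strategy indices, then handle separately the case where the supremum of the lengths is attained (your ``attained'' case; the paper's ``successor $\theta$'' case, after noting $D$ is linearly ordered) versus unattained, in the latter case closing off the union of the strategies and verifying Clauses~\eqref{C3ptree} and \eqref{d162} at the new top level via the club property of the $\dot C_n$'s through $R$. The only cosmetic difference is that the paper phrases the dichotomy as successor versus limit order-type of $(D,\unlhd)$, while you phrase it as whether $\alpha^*$ is realized.
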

\begin{proof} Since $D$ is directed, given any $a,a'\in D$, we may pick $b\in D$ extending $a$ and $a'$;
now, as $\pi[D]=\{\bar p\}$, find $\vec S,\vec S',\vec T$ such that $a=(\bar p,\vec S)$, $a'=(\bar p,\vec S')$ and $b=(\bar p,\vec T)$,
and note that, by Definition~\ref{d20}, for all $q\in W(\bar p)$ and $i\in\dom(\vec{S})\cap \dom(\vec{S'})$, $S_i(q)=\vec T_i(q)=S'_i(q)$.
It thus follows that $D$ is linearly ordered by $\unlhd$, and, for all $(\bar p,\vec S),(\bar p,\vec S')\in D$,
$(\bar p,\vec S)\unlhd (\bar p,\vec S')$ iff $\dom(\vec S)\geq\dom(\vec S')$.
So $(D,\unlhd)$ is order-isomorphic to $(\theta,\ni)$ for some ordinal $\theta<\kappa_n$.
In particular, if $\theta$ is a successor ordinal, then $D$ admits a lower bound. So let us assume that $\theta$ is a limit ordinal.

For every $\tau<\theta$, let $(\bar p,\vec S^\tau)$ denote the $\tau^{th}$-element of $D$.
Set $\alpha:=\sup_{\tau<\theta}\dom(\vec{S^\tau})$.
We define a $\bar{p}$-strategy $\vec{S}=\langle S_i\mid i\leq\alpha\rangle$ as follows. Fix $q\in W(\bar p)$.

$\br$ For $i<\alpha$, $S_i(q)$ is defined as the unique element of $\{S^\tau_i(q)\mid \tau<\theta, i\in\dom(\vec{S^\tau})\}$.

$\br$ For $i=\alpha$, we distinguish two cases:

$\br\br$ If $S_i(q)=\emptyset$ for all $i<\alpha$, then we continue and let $S_\alpha(q):=\emptyset$;

$\br\br$ Otherwise, let $S_\alpha(q):=\bigcup_{i<\alpha}S_i(q)\cup\{\beta_q\}$, where $$\beta_q:=\sup\{\max(S_i(q))\mid i<\alpha, S_i(q)\neq\emptyset\}.$$

\begin{claim} $(\bar p,\vec S)\in A_n$. In particular, $(\bar p,\vec S)$ is a lower bound for $D$.
\end{claim}
\begin{proof}
Since, for each $\tau<\theta$, $\vec{S}^\tau$ is a $\bar{p}$-strategy, a moment of reflection makes it clear that we only need to verify that $S_\alpha$ is a labeled $\bar p$-tree. Let $q\in W(\bar p)$ be arbitrary. As $\langle S_i(q)\mid i<\alpha\rangle$ is weakly $\sqsubseteq$-increasing sequence of closed sets we only need to verify Clauses~\eqref{C3ptree} and \eqref{C4ptree} of Definition~\ref{labeled-p-tree}.
First we show that $q\Vdash_{\mathbb{P}}S_\alpha(q)\cap \dot{T}=\emptyset$.
For this aim observe that Definition~\ref{labeled-p-tree}\eqref{C4ptree} yields $(q,\max(S_i(q))\in R$,  for each $i<\alpha$. Now, for each $r\leq q$ with $\ell(r)\in I$ and $i<\alpha$, $r\Vdash_{\mathbb{P}_{\ell(r)}} \max(S_i(q))\in \dot{C}_{\ell(r)}$, hence $r\Vdash_{\mathbb{P}_{\ell(r)}} \beta_q\in \dot{C}_{\ell(r)}$, and thus, again by definition of $R$, $(\beta_q, q)\in R$ (cf. Lemma~\ref{lemma9}). Combining  Definition~\ref{labeled-p-tree}\eqref{C3ptree} with $(\beta_q, q)\in R$ it altogether follows that $q\Vdash_{\mathbb{P}}S_\alpha(q)\cap \dot{T}=\emptyset$.

Finally let $q'\le q$ and let us check that the last bullet holds. For all $i<\alpha$, since $S_i$ is a $\bar{p}$-strategy, either $S_i(q')=\emptyset$ or $(\max(S_i(q')),q)\in R$.
If $S_\alpha(q')\neq\emptyset$,
then $\max(S_\alpha(q'))$ is the limit of $\langle \max(S_i(q'))\mid i<\alpha, S_i(q')\neq\emptyset\rangle$, so that, arguing as before, $(\max(S_\alpha(q')),q)\in R$.

Thus we have shown that $(\bar{p},\vec{S})\in A_n$ and clearly $(\bar{p},\vec{S})$ gives a lower bound for $D$.
\end{proof}

This completes the proof.
\end{proof}

\begin{lemma}[Mixing property]\label{mixinglemma}
Let $(p,\vec S)=a\in A$, $p'\le^0 p$, and $m<\omega$.
Suppose that $g:W_m(p')\rightarrow\conea{a}$ is a function such that $\pi\circ g$ is the identity map.
Then there exists $b\unlhd^0 a$ with $\pi(b)=p'$ such that $\fork{}{b}(r)\unlhd^0 g(r)$ for every $r\in W_m(p')$.
\end{lemma}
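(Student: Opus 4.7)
Writing $g(r)=(r,\vec T^r)$ for each $r\in W_m(p')$, my plan is to amalgamate the strategies $\vec T^r$ into a single $p'$-strategy $\vec S'$ on $W(p')$, exploiting the canonical partition $W(p')=W_{<m}(p')\sqcup\bigsqcup_{r\in W_m(p')}W(r)$ supplied by Lemma~\ref{lemma7}(1)-(2). Since Definition~\ref{SigmaPrikry}\eqref{csize} gives $|W_m(p')|<\mu$ and $\mu=\kappa^+$ is regular, I would first fix $\delta<\mu$ dominating $\dom(\vec S)-1$ and every $\dom(\vec T^r)-1$. Using stabilized indices, I would then define $\vec S'=\langle S'_i\mid i\le\delta\rangle$ by
\[
S'_i(q):=\begin{cases} T^r_{\min(i,\dom(\vec T^r)-1)}(q), & q\in W(r),\ r\in W_m(p'), \\ S_{\min(i,\dom(\vec S)-1)}(w(p,q)), & q\in W_{<m}(p'), \end{cases}
\]
(with the obvious convention if some $\vec T^r$ or $\vec S$ is empty), and set $b:=(p',\vec S')$.

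The relations $b\unlhd^0 a$ and $\fork{}{b}(r)\unlhd^0 g(r)$ would then be immediate from the definitions. The two rows of the piecewise formula for $S'_i$ are consistent because $g(r)\unlhd a$ forces $T^r_i(q)=S_i(w(p,q))$ for all $i<\dom(\vec S)$ and $q\in W(r)$; this gives $b\unlhd^0 a$ at once. The containment $\fork{}{b}(r)\unlhd^0 g(r)$ follows because $w(p',q)=q$ for every $q\in W(p')$, so the strategy of $\fork{}{b}(r)$ coincides with $\vec S'\restriction W(r)$, which by construction matches $\vec T^r$ on its domain. What remains is to verify that $\vec S'$ is a $p'$-strategy, i.e., satisfies the clauses of Definitions~\ref{strategy} and~\ref{labeled-p-tree}. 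Closedness, monotonicity, $\dot T$-disjointness, the $\sqsubseteq$-coherence at successor steps, and ordinal-closure at limit steps, each follow by routine case analysis along the tree decomposition, reducing in each case either to the corresponding clause for $\vec S$ on $W(p)$ or to the one for some $\vec T^r$ on $W(r)$, with appeals to Lemma~\ref{l15} and Definition~\ref{SigmaPrikry}\eqref{itsaprojection} to translate between the $p$-tree and the $p'$-tree.

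The main obstacle I foresee is verifying Clause~\eqref{C4ptree} of Definition~\ref{labeled-p-tree} for $\vec S'$ in the \emph{cross} subcase $q'\in W(r)$ for some $r\in W_m(p')$ and $q\in W_{<m}(p')$ with $r\le q$. Here the strategy $\vec T^r$ only supplies $(\max(T^r_i(q')),r)\in R$, whereas what is required for $\vec S'$ is $(\max(T^r_i(q')),q)\in R$ for the strictly weaker $q\ge r$ --- a direction not licensed by the downward closure of $R$ in its second coordinate. To close this gap I would exploit that $W_m(p')\cap\cone{q}$ is a maximal antichain in $\cone{q}$ at length $\lh(p')+m$ (Lemma~\ref{lemma7}(1)), reducing the verification for arbitrary extensions $s\le q$ of length at least $\lh(p')+m$ to the information already supplied by each $\vec T^{r^*}$ with $r^*\in W_m(p')\cap\cone{q}$, and invoke the Complete Prikry Property (Definition~\ref{SigmaPrikry}\eqref{c6}), applied to the $0$-open set $\{s\in P:s\Vdash_{\mathbb P_{\lh(s)}}\max(T^r_i(q'))\in\dot C_{\lh(s)}\}$, to handle extensions of intermediate length.
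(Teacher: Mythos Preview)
Your construction coincides with the paper's: the same piecewise formula over the partition $W(p')=W_{<m}(p')\sqcup\bigsqcup_r W(r)$, the same index-stabilization via $\min$. You are also correct that the cross subcase of Clause~\eqref{C4ptree} --- namely $q'\in W(r)$ but $q\in W_{<m}(p')$ --- is the real obstacle; the paper's own verification of this clause writes ``we focus on the case $T_i(q')=S_j(w(p,q'))$'' and does not return to the remaining case, so the cross subcase is not explicitly treated there either. Observe, incidentally, that for $i\le\alpha$ the cross subcase collapses to the focused one, since $g(r)\unlhd a$ forces $S^\tau_i(q')=S_i(w(p,q'))$; the only open content is $i>\alpha$.

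Your proposed resolution of that remaining subcase, however, does not go through. Routing an arbitrary $s\le q$ through some $r^*\in W_m(p')\cap\cone q$ buys you nothing: for $r^*\ne r$ the strategy $\vec T^{r^*}$ carries no information whatsoever about the ordinal $\beta:=\max(S^\tau_j(q'))$, which is determined entirely by the branch $r=r^{\tau_{q'}}$, and the hypotheses of the lemma impose no relation between max-values appearing in different branches of $g$. The appeal to CPP fails for a different reason: applying Clause~\eqref{c6} of Definition~\ref{SigmaPrikry} to the $0$-open set $\{s:s\Vdash_{\mathbb P_{\lh(s)}}\beta\in\dot C_{\lh(s)}\}$ yields only a \emph{direct extension} $\bar q\le^0 q$ deciding membership at each level; it cannot certify $(\beta,q)\in R$ for the fixed node $q\in W(p')$, and you are not at liberty to replace $q$ by a proper extension, since $q$ is a prescribed element of the tree on which $T_i$ must be a labeled $p'$-tree as it stands.
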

\begin{proof} Using Definition~\ref{SigmaPrikry}(\ref{csize}), we may find some cardinal $\theta<\mu$ and an injective enumeration $\{r^\tau\mid \tau<\theta\}$ of $W_m({p'})$.
For each $\tau<\theta$, let $\vec{S}^\tau$ be such that $g(r^\tau)=(r^\tau, \vec{S}^\tau)$.
As we are seeking $b\unlhd^0 a$ such that, in particular, for every $\tau<\theta$, $\fork{}{b}(r)\unlhd^0 g(r^\tau)$,
we may make our life harder and assume that $\dom(\vec{S}^\tau)$ is nonzero, say $\dom(\vec{S}^\tau)=\alpha_\tau+1$.

Set $\alpha:=\sup(\dom(\vec{S}))$, so that, if $\dom(\vec S)>0$, then $\dom(\vec S)=\alpha+1$.
Set $\alpha':=\sup_{\tau<\theta}\alpha_\tau$, and note that, by regularity of $\mu$, $\alpha\leq\alpha'<\mu$.
Our goal is to define a sequence $\vec{T}=\langle T_i:W(p')\rightarrow[\mu]^{<\mu}\mid i\leq\alpha'\rangle$ for which $b:=(p',\vec{T})$ satisfies the conclusion of the lemma.

As $\{r^\tau\mid \tau<\theta\}$ is an enumeration of the $m^{th}$-level of the $p$-tree $W(p')$,
Lemma~\ref{lemma7} entails that, for each $q\in W({p'})$, there is a unique ordinal $\tau_q<\theta$, such that $q$ is comparable with $r^{\tau_q}$.
It thus follows from Lemma~\ref{lemma7}(3) that, for all $q\in W(p')$, $\ell(q)-\ell(p')\geq m$ iff $q\in W(r^{\tau_q})$.

Now, for all $i\leq\alpha'$ and $q\in W({p'})$, let:
$$T_i(q):=
\begin{cases}
S^{\tau_q}_{\min\{i,\alpha_{\tau_q}\}}(q),& \text{if }q\in W(r^{\tau_q});\\
S_{\min\{i,\alpha\}}(w(p,q)),& \text{if }q\notin W(r^{\tau_q})\text{ and }\alpha>0;\\
\emptyset,&\text{otherwise}.
\end{cases}$$

\begin{claim}\label{claim5211}
Let $i\leq\alpha'$. Then $T_i$ is a labeled $p'$-tree.
\end{claim}
\begin{proof} 
Fix $q\in W(p')$ and let us go over the Clauses of Definition~\ref{labeled-p-tree}.
\begin{enumerate}
\item It is clear that in any of the three cases, $T_i(q)$ is a closed bounded subset of $\mu$.

\item Let $q'\le q$. 
We focus on the non-trivial case in which $\ell(q')-\ell(p')\geq m$, while $\ell(q)-\ell(p')<m$ and $\alpha>0$.

$\br$ If $i\leq \alpha$, then $T_i(q)=S_i(w(p,q))$ and $T_i(q')=S^{\tau_q}_i(q')$.
In this case, since $w(r^{\tau_q}, q)\le w(p,q)$ and $\vec{S}$ is a $p$-strategy, $S_i(w(p,q))\s S_i(w(r^{\tau_q},q))$. 
In addition, since $(r^{\tau_q}, \vec{S}^{\tau_q})\unlhd (p,\vec{S})$, $S_i(w(r^{\tau_q},q))=S^{\tau_q}_i(q)$, so that $T_i(q)\s S^{\tau_q}_i(q)$. 
But $S^{\tau_q}_i(q)\s S^{\tau_q}_i(q')$, so that altogether $T_i(q)\s T_i(q')$, as desired.

$\br$ If $i>\alpha$, then $T_i(q)=S_{\alpha}(w(p,q))$ and $T_i(q')=S^{\tau_q}_j(q')$ for $j:=\min\{i,\alpha_{\tau_q}\}$. 
In this case, as $\vec{S}$ is a $p'$-strategy and $\vec{S}^{\tau_q}$ is an $r^{\tau_q}$-strategy,
we infer from  $(r^{\tau_q}, \vec{S}^{\tau_q})\unlhd (p,\vec{S})$ that:
$$S_{\alpha}(w(p,q))\s S_{\alpha}(w(r^{\tau_q},q))=S^{\tau_q}_{\alpha}(q)\sq S^{\tau_q}_{j}(q)\s S^{\tau_q}_{j}(q').$$
Altogether, $T_i(q)\s T_i(q')$, as desired.

\item If $q\in W(r^{\tau_q})$, then this follows from the fact that $S^{\tau_q}_{\min\{i,\alpha_{\tau_q}\}}$
is a labeled $r^{\tau_q}$-tree. 
If $q\notin W(r^{\tau_q})\text{ and }\alpha>0$, then this follows from the fact that $S_{\min\{i,\alpha\}}$ is a labeled $p$-tree and $q\le w(p, q)$.

\item Let $q'\le q$ in $W(p')$ and assume that $T_i(q')\neq \emptyset$. 
We focus on the case $T_i(q')=S_j(w(p,q'))$, for $j:=\min\{i,\alpha\}$.
In particular, $\beta:=\max(S_j(w(p,q')))$ is well-defined.
Clearly $w(p,q')\le w(p,q)$ so, since $S_j$ is a labeled $p$-tree, $(\beta, w(p,q'))\in R$.
But $q'\leq w(p,q')$, so by the nature of $R$, we have that $(\beta, q')\in R$, as well.\qedhere
\end{enumerate}
\end{proof}

\begin{claim}
The sequence $\vec{T}=\langle T_i:W(p')\rightarrow[\mu]^{<\mu}\mid i\leq\alpha'\rangle$ is a $p'$-strategy.
\end{claim}
\begin{proof} We need to go over the clauses of Definition~\ref{strategy}.
However, Clause~\eqref{C1pstrategy} is trivial, Clause~\eqref{C2pstrategy} is established in the preceding claim,
and Clauses~\eqref{C3pstrategy} and \eqref{C5pstrategy} follow from the corresponding features of $\vec S$ and the $\vec S^\tau$'s.
Thus, we are left with verifying Clause~\eqref{C4pstrategy}.

To this end, fix $i<\alpha$ and a pair $q'\le q$ in $W(p')$. We have to show that $(T_{i+1}(q)\setminus T_i(q))\sqsubseteq (T_{i+1}(q')\setminus T_i(q'))$. 
As before, the only non-trivial case is when $\ell(q')-\ell(p')\geq m$, while $\ell(q)-\ell(p')<m$ and $\alpha>0$.
To avoid arguing about the empty set, we may also assume that $\alpha>i$. In particular, $\alpha_\tau>i$. So
\begin{itemize}
\item $T_{i+1}(q)\setminus T_i(q)=S_{i+1}(w(p,q))\setminus S_{i}(w(p,q))$, and
\item $T_{i+1}(q')\setminus T_i(q')=S^{\tau_q}_{i+1}(q')\setminus S^{\tau_q}_{i}(q')$.
\end{itemize}

Now, as $\vec{S}$ is a $p$-strategy, we infer that $S_{i+1}(w(p,q))\setminus S_i(w(p,q))\sq S_{i+1}(w(p,q'))\setminus S_i(w(p,q'))$. 
But $(r^{\tau_{q'}}, \vec{S}^{\tau_{q'}})\unlhd (p, \vec{S})$, and hence, for each $j\in\{i,i+1\}$, $S^{{\tau_{q'}}}_j(q')=S_j(w(p,q'))$. 
The desired equation now follows immediately.
\end{proof}

Thus, we have established that $b:=(p',\vec{T})$ is a legitimate condition.

\begin{claim}\label{claim5123} $\pi(b)=p'$ and $b\unlhd^0 a$.
\end{claim}
\begin{proof}
The first assertion is trivial, and it also implies that $b\unlhd^0 a$ iff  $b\unlhd a$,
hence, we focus on establishing the latter.
As $p'\le p$ and $\alpha'\geq\alpha$, we are left with verifying Clause~\eqref{C2cd20} of Definition~\ref{d20}.
To avoid trivialities, suppose also that $\alpha>0$.
Now, let $i\leq\alpha$ and $q\in W(p')$ be arbitrary. 

$\br$ If $\lh(q)<\lh(p')+m$, then we have $T_i(q)=S_i(w(p,q))$, and we are done. 

$\br$ If $\lh(q)\geq\lh(p')+m$, then $T_i(q)=S_i^{\tau_q}(q)$ and, since $(r^{\tau_q}, \vec{S}^{\tau_q})\unlhd (p,\vec{S})$, $T_i(q)=S_i(w(p,q))$, as desired.
\end{proof}

\begin{claim} Let $\tau<\theta$. For each $q\in W(r^\tau)$, $w(p',q)=w(r^\tau,q)$.
\end{claim}
\begin{proof}  As $r^\tau\le p'$, we have $\{ s\mid q\le s\le r^\tau\}\s\{ s\mid  q\le s\le p'\}$,
so that $w(r^\tau,q)\le w(p', q)$.
In addition, as $w(p',q)$ and $r^\tau$ are compatible elements of $W(p')$ (as witnessed by $q$),
we infer from Lemma~\ref{lemma7}(2), $\lh(w(p',q))=\lh(q)\geq\lh(r^\tau)$ and Definition~\ref{SigmaPrikry}(\ref{c4}),
that $w(p',q)\le r^\tau$, so that $w(p',q)\in \{ s\mid q\le s\le r^\tau\}$,
and hence $w(p',q)\le w(r^\tau, q)$.
\end{proof}

Recalling Claim~\ref{claim5123},
to complete our proof, we fix an arbitrary $\tau<\theta$, and turn to show that $\fork{}{b}(r^\tau)\unlhd^0 g(r^\tau)$.
By Lemma~\ref{forkingindeed}\eqref{C5forkingindeed}, $\pi(\fork{}{b}(r^\tau))=r^\tau=\pi(g(r^\tau))$,
so that we may focus on verifying that $\fork{}{b}(r^\tau)\unlhd g(r^\tau)$.

To this end, let $\vec T^\tau$ denote the $r^\tau$-strategy such that $\fork{}{b}(r^\tau)=(r^\tau,\vec{T}^\tau)$.
By Definition~\ref{d45}\eqref{pitchfork}, $\dom(\vec{T}^\tau)=\dom(\vec T)=\alpha'+1$, hence $\dom(\vec{S}^\tau)=\alpha_\tau+1\leq\alpha'+1\leq\dom(\vec{T}^\tau)$. 
Now, let $i\leq \alpha_\tau$ and $q\in W(r^\tau)$.
By Definition~\ref{d45}\eqref{pitchfork}, $T^\tau_i(q)=T_i(w(p',q))$. 
By the preceding claim $w(p',q)=w(r^\tau,q)$, so that $q':= w(p',q)$ is in $W(r^\tau)$ and $\tau_{q'}=\tau$.
In effect, by definition of $T_i(q')$ (just before Claim~\ref{claim5211}, we get that $T_i(q')=S^\tau_i(q')$.
Altogether, $T_i^\tau(q)=S^\tau_i(q')=S_i^\tau(w(r^\tau,q'))$, as required by Clause~\eqref{C2cd20} of Definition~\ref{d20}.
\end{proof}

\begin{cor}\label{corA}
$(\mathbb{A},\lh_\mathbb{A}, c_\mathbb{A})$ is a $\Sigma$-Prikry triple,
and $\one_{\mathbb A}\Vdash_{\mathbb{A}}\check{\mu}=\check\kappa^+$.
\end{cor}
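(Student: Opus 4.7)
The plan is to verify each of the seven clauses of Definition~\ref{SigmaPrikry} for $(\mathbb{A},\lh_\mathbb{A},c_\mathbb{A})$ by invoking the structural lemmas already established for forking projections in Section~\ref{sectionforking}, together with the specific lemmas proved earlier in this section for $\mathbb{A}(\mathbb{P},\dot T)$. Since Lemma~\ref{forkingindeed} establishes that $(\pitchfork,\pi)$ as defined in Definition~\ref{d45} is a forking projection from $(\mathbb{A},\lh_\mathbb{A},c_\mathbb{A})$ to $(\mathbb{P},\lh,c)$, all the general machinery applies.

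I would verify the clauses in order: Clause~(\ref{c4}) is immediate from Lemma~\ref{C2ASigmaPrikry}. Clause~(\ref{c2}) combines Lemma~\ref{313} with Lemma~\ref{C1ASigmaPrirky}: the latter supplies lower bounds for directed families of fixed $\pi$-image, and the former lifts this to full $\kappa_n$-directed-closure of $\mathbb{A}_n$. Clause~(\ref{c1}) follows from Lemma~\ref{C3ASigmaPriky}, provided we first observe that $|\rng(c_\mathbb{A})|\leq\mu$; but inspecting Definition~\ref{DefCA}, each value $c_\mathbb{A}(p,\vec S)$ lies in $\rng(c)\times[\mu\times\rng(c)\times[\mu]^{<\mu}]^{<\mu}$, which has size $\mu$ since $\mu^{<\mu}=\mu$ and $|\rng(c)|\leq\mu$. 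Clauses~(\ref{c5}) and~(\ref{csize}) are Lemmas~\ref{lemma48} and~\ref{lemma49}, respectively, while Clause~(\ref{itsaprojection}) is Lemma~\ref{lemma410}.

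The main step is Clause~(\ref{c6}), the Complete Prikry Property. Here I would apply Lemma~\ref{corollary320}, which reduces the CPP for $(\mathbb{A},\lh_\mathbb{A})$ to the mixing property of the forking projection $(\pitchfork,\pi)$. The mixing property has already been verified in Lemma~\ref{mixinglemma}, which was the nontrivial combinatorial heart of the construction: given a choice function $g:W_m(p')\to\conea{a}$ over $\pi$, one pastes together the $g$-values with the original strategy $\vec S$ to form a single strategy $\vec T$ of length $\sup_\tau\alpha_\tau$, handling the extra ordinal $\beta_q$ created at the top level by exploiting that $\dot C_n$ is forced to be a club and that the relation $R$ is closed under suprema of legitimate chains.

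Finally, for the assertion $\one_\mathbb{A}\Vdash_\mathbb{A}\check\mu=\check\kappa^+$, I would invoke Corollary~\ref{cor523}, which applies since Clauses~(\ref{c2}) and~(\ref{c6}) of Definition~\ref{SigmaPrikry} have just been verified for $(\mathbb{A},\lh_\mathbb{A})$, and since by hypothesis $\one_\mathbb{P}\Vdash_\mathbb{P}``\check\kappa\text{ is singular}"$. The only delicate point to double-check is that the forking projection machinery and the mixing property really deliver the CPP in the precise form of Clause~(\ref{c6}), but this is exactly what Lemma~\ref{corollary320} asserts, so the proof amounts to assembling the components with no further calculation.
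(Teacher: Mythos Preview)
Your proposal is correct and follows essentially the same approach as the paper's own proof: both verify the seven clauses of Definition~\ref{SigmaPrikry} by citing exactly the same lemmas (Lemma~\ref{C2ASigmaPrikry}, Lemma~\ref{313} with Lemma~\ref{C1ASigmaPrirky}, Lemma~\ref{C3ASigmaPriky}, Lemmas~\ref{lemma48}--\ref{lemma410}, and Lemma~\ref{corollary320} with Lemma~\ref{mixinglemma}), and both conclude via Corollary~\ref{cor523}. One minor inaccuracy in your descriptive aside: the ordinal $\beta_q$ and the closure-of-$R$-under-suprema argument belong to the proof of Lemma~\ref{C1ASigmaPrirky} (directed closure), not to the mixing lemma~\ref{mixinglemma}; but this does not affect the correctness of your proof, since you invoke the right lemmas at the right places.
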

\begin{proof} We first go over the clauses of Definition~\ref{SigmaPrikry}:
\begin{enumerate}
\item By Lemma~\ref{C2ASigmaPrikry}.
\item By Lemma~\ref{313} together with Lemma~\ref{C1ASigmaPrirky}.
\item By Lemma~\ref{C3ASigmaPriky} and the fact that $|H_\mu|=\mu$.
\item By Lemma~\ref{lemma48}.
\item By Lemma~\ref{lemma49}.
\item By Lemma~\ref{lemma410}.
\item By Lemma~\ref{corollary320} together with Lemma~\ref{mixinglemma}.
\end{enumerate}

Finally, by Corollary~\ref{cor523} and the fact that $\one_{\mathbb P}\Vdash_{\mathbb P}``\check\kappa\text{ is singular}"$,
$\one_{\mathbb A}\Vdash_{\mathbb{A}}\check{\mu}=\check\kappa^+$.
\end{proof}

For the record, we make explicit one more feature of the forking projection constructed in this section.

\begin{lemma}[Transitivity]\label{claimRowbottom}
Let $a\in A$. For all $q\le\pi(a)$ and $r\in W(q)$, $$\fork{}{a}(r)=\fork{}{\fork{}{a}(q)}(r).$$
\end{lemma}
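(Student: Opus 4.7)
The plan is to unpack both sides of the asserted equality using the definition of the forking map (Definition~\ref{d45}) and then reduce the resulting identity to a direct application of Lemma~\ref{l15}. Writing $a=(p,\vec{S})$, the key observation is that iterating $\pitchfork$ amounts to composing ``$w(p,\cdot)$ followed by $w(q,\cdot)$'', which is precisely the composition whose collapse is governed by Lemma~\ref{l15}.

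Concretely, I would first apply Definition~\ref{d45} to $a$ and the extension $q\le p$ to obtain $\fork{}{a}(q)=(q,\vec{S}')$ with $\dom(\vec{S}')=\dom(\vec{S})$ and $S'_i(s)=S_i(w(p,s))$ for every $i\in\dom(\vec{S}')$ and $s\in W(q)$. Applying Definition~\ref{d45} a second time, now to the condition $\fork{}{a}(q)$ together with $r\in W(q)$ (so in particular $r\le q$), I would get $\fork{}{\fork{}{a}(q)}(r)=(r,\vec{S}'')$, where $\dom(\vec{S}'')=\dom(\vec{S}')=\dom(\vec{S})$ and
$$S''_i(s)=S'_i(w(q,s))=S_i(w(p,w(q,s)))\text{ for every }i\in\dom(\vec{S}'')\text{ and }s\in W(r).$$
On the other hand, applying Definition~\ref{d45} directly to $a$ and $r$ yields $\fork{}{a}(r)=(r,\vec{T})$ with $\dom(\vec{T})=\dom(\vec{S})$ and $T_i(s)=S_i(w(p,s))$ for every $i\in\dom(\vec{T})$ and $s\in W(r)$.

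Thus, after the unpacking, the proof reduces to showing $S_i(w(p,w(q,s)))=S_i(w(p,s))$ for all $i\in\dom(\vec{S})$ and all $s\in W(r)$. Since $s\in W(r)$ and $r\le q\le p$, Lemma~\ref{l15} (applied with $\bar p:=r$, $p':=q$, and $p:=p$) gives precisely the identity $w(p,s)=w(p,w(q,s))$, so $S''_i=T_i$ for all $i$ and the two conditions $\fork{}{a}(r)$ and $\fork{}{\fork{}{a}(q)}(r)$ coincide. There is no real obstacle: the content of the lemma is just the reflection of Lemma~\ref{l15} at the level of the forking projection, and the only bookkeeping is the trivial check that the three $r$-strategies under comparison all inherit the common domain $\dom(\vec{S})$.
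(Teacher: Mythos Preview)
Your proof is correct and follows essentially the same route as the paper: both unpack Definition~\ref{d45} twice to compute $\fork{}{\fork{}{a}(q)}(r)$ as $(r,\vec{S}'')$ with $S''_i(s)=S_i(w(p,w(q,s)))$, compute $\fork{}{a}(r)$ as $(r,\vec T)$ with $T_i(s)=S_i(w(p,s))$, and then identify the two via Lemma~\ref{l15}. Your write-up is in fact more explicit than the paper's, which compresses the intermediate computation and the appeal to Lemma~\ref{l15} into a single ``analogously''.
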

\begin{proof} Set $(p,\vec{S}):=a$.
Fix an arbitrary $q\le\pi(a)$, and let $b=\fork{}{a}(q)$.
Fix an arbitrary $r\in W(q)$, and set $(t, \vec{T}):=\fork{}{a}(r)$ and $(u,\vec{U}):=\fork{}{b}(r)$.
By Definition~\ref{d45}, it follows that $t=r=u$ and $\dom(\vec{T})=\dom(\vec{S})=\dom(\vec{U})$. Once again Definition~\ref{d45} yields, for each $i\in\dom(\vec{S})$ and $s\in W(t)$, $T_i(s)=S_i(w(p,s))$. Analogously, for each $i\in\dom(\vec{S})$ and $s\in W(u)$, $Q_i(s)=S_i(w(p,s))$. Altogether, $W(t)=W(u)$, and for each $i\in\dom(\vec{S})$ and $s\in W(u)$, $T_i(s)=Q_i(s)$, as desired.
\end{proof}

\section{Conclusion}
By putting everything together, we arrive at the following corollary:

\begin{cor}\label{onestep}  Suppose
$\Sigma=\langle \kappa_n\mid n<\omega\rangle$ is a non-decreasing sequence of Laver-indestructible supercompact cardinals,
and let $\kappa:=\sup(\Sigma)$. Suppose:
\begin{itemize}
\item[(i)] $(\mathbb P,\lh,c)$ is a $\Sigma$-Prikry notion of forcing, and $\one_{\mathbb P}\Vdash_{\mathbb{P}}``\check\kappa\text{ is singular}"$;
\item[(ii)] $\one_{\mathbb P}\Vdash_{\mathbb{P}}\check\mu=\check\kappa^+$, for some cardinal $\mu=\mu^{<\mu}$;
\item[(iii)] $\mathbb P\s H_{\mu^+}$;
\item[(iv)] $r^\star\in P$ forces that $z$ is a $\mathbb P$-name for a stationary subset of $(E^\mu_\omega)^V$
that does not reflect in $\{\alpha<\mu\mid \omega<\cf^V(\alpha)<\kappa\}$.
\end{itemize}

Then, there exists a $\Sigma$-Prikry triple $(\mathbb A,\lh_{\mathbb A},c_{\mathbb A})$ such that:
\begin{enumerate}
\item $(\mathbb A,\lh_{\mathbb A},c_{\mathbb A})$ admits a forking projection to $(\mathbb P,\lh,c)$ 
that has the mixing property;
\item $\one_{\mathbb A}\Vdash_{\mathbb A}\check\mu=\check\kappa^+$;
\item $\mathbb A\s H_{\mu^+}$;
\item $\myceil{r^\star}{\mathbb A}$ forces that $z$ is nonstationary.
\end{enumerate}
\end{cor}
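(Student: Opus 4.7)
The plan is to set $\mathbb{A} := \mathbb{A}(\mathbb{P}, z)$, the notion of forcing introduced in Definition~\ref{d20} relative to the name $z$ and the base condition $r^\star$, and equip it with the grading $\lh_{\mathbb A}$ and coloring $c_{\mathbb A}$ from Definitions~\ref{DeflenghtA} and \ref{DefCA} and the candidate forking projection $(\pitchfork, \pi)$ from Definition~\ref{d45}. Essentially all of the work has been executed in Section~\ref{killingone}; what remains is to observe that the standing hypotheses of that section are implied by the assumptions of the corollary, and then to collect the conclusions from the appropriate results.

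First I would verify the standing hypotheses of Section~\ref{killingone}: that $\mu^{<\mu}=\mu$ and $\one_\mathbb{P}\Vdash_\mathbb{P}``\check\kappa$ is singular$"$ are just hypotheses (ii) and (i) of the corollary, and that $V^{\mathbb P_n}\models\refl(1, E^\mu_\omega, E^\mu_{<\kappa_n})$ holds for every $n<\omega$. The latter is an immediate consequence of Lemma~\ref{l25}: by Definition~\ref{SigmaPrikry}(\ref{c2}), $\mathbb P_n$ is $\kappa_n$-directed-closed, and since $\kappa_n$ is Laver-indestructible supercompact, it remains supercompact in $V^{\mathbb P_n}$; the desired reflection (in fact, the stronger $\refl({<}\omega, E^\mu_{<\kappa_n}, E^\mu_{<\kappa_n})$) follows, which implies $\refl(1,E^\mu_\omega,E^\mu_{<\kappa_n})$ since $\kappa_n>\aleph_1$.

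Next I would collect the four conclusions of the corollary from the corresponding results of Section~\ref{killingone}. Item~(2), that $\one_\mathbb{A}\Vdash_\mathbb{A}\check\mu=\check\kappa^+$ together with $(\mathbb A,\lh_\mathbb{A},c_\mathbb{A})$ being $\Sigma$-Prikry, is precisely Corollary~\ref{corA}. Item~(3), that $\mathbb A\s H_{\mu^+}$, follows from Lemma~\ref{lemma35} applied with $\nu:=\mu^+$, using hypothesis (iii). Item~(4), that $\myceil{r^\star}{\mathbb A}$ forces $z$ to be nonstationary, is Theorem~\ref{purpose}. For item~(1), Lemma~\ref{forkingindeed} shows that $(\pitchfork,\pi)$ is a forking projection from $(\mathbb A,\lh_\mathbb{A},c_\mathbb{A})$ to $(\mathbb P,\lh,c)$, while Lemma~\ref{mixinglemma} verifies that it enjoys the mixing property.

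The chief technical obstacle has already been absorbed into the development of Section~\ref{killingone} itself, namely, the verification of the Complete Prikry Property for $\mathbb A$, which is delivered via the combination of the mixing property (Lemma~\ref{mixinglemma}) and the transfer result Lemma~\ref{corollary320}. The combinatorial heart of that argument lies in weaving the finitely many labeled trees $\vec{S}^\tau$ attached to the stems in $W_m(p')$ into a single coherent $p'$-strategy $\vec T$, with Lemma~\ref{l15} ensuring that the induced forking relations align as required; at the level of the corollary itself, however, no further combinatorics is needed beyond the invocations above.
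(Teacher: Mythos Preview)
Your proposal is correct and follows essentially the same route as the paper: verify the standing hypotheses of Section~\ref{killingone} via Lemma~\ref{l25}, then read off the four conclusions from Corollary~\ref{corA}, Lemma~\ref{lemma35}, Theorem~\ref{purpose}, and Lemmas~\ref{forkingindeed} and~\ref{mixinglemma}. One small inaccuracy in your closing commentary: the family $\{\vec S^\tau\mid \tau<\theta\}$ in Lemma~\ref{mixinglemma} is indexed by $W_m(p')$, which has size ${<}\mu$ rather than being finite.
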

\begin{proof}  By Lemma~\ref{l25}, for all $n<\omega$,
$V^{\mathbb P_n}\models \refl({<}\omega,E^\mu_{<\kappa_n},E^\mu_{<\kappa_n})$.
So, all the blanket assumptions of Section~\ref{killingone} are satisfied,
and we obtain a notion of forcing $\mathbb A:=\mathbb A(\mathbb P,z)$ together with maps $\lh_{\mathbb A}$ and $c_{\mathbb A}$
such that, by Corollary~\ref{corA}, $(\mathbb A,\lh_{\mathbb A},c_{\mathbb A})$ is $\Sigma$-Prikry.

Now, Clauses (1) and (2) follow from Lemma~\ref{forkingindeed} and Corollary~\ref{corA},
Clause~(3) follows from Lemma~\ref{lemma35}, and
Clause~(4) follows from Theorem~\ref{purpose}.
\end{proof}

\newcommand{\etalchar}[1]{$^{#1}$}


\begin{thebibliography}{CDM{\etalchar{+}}17}

\bibitem[CDM{\etalchar{+}}17]{CDMMSh:963}
James Cummings, Mirna D\v{z}amonja, Menachem Magidor, Charles Morgan, and
  Saharon Shelah.
\newblock A framework for forcing constructions at successors of singular
  cardinals.
\newblock {\em Trans. Amer. Math. Soc.}, 369(10):7405--7441, 2017.

\bibitem[CFM01]{cfm}
James Cummings, Matthew Foreman, and Menachem Magidor.
\newblock Squares, scales and stationary reflection.
\newblock {\em J. Math. Log.}, 1(1):35--98, 2001.

\bibitem[CFM{\etalchar{+}}18]{AIM}
James Cummings, Sy-David Friedman, Menachem Magidor, Assaf Rinot, and Dima
  Sinapova.
\newblock Ordinal definable subsets of singular cardinals.
\newblock {\em Israel J. Math.}, 226(2):781--804, 2018.

\bibitem[Coh63]{cohen1}
Paul~J. Cohen.
\newblock The independence of the continuum hypothesis.
\newblock {\em Proc. Nat. Acad. Sci. U.S.A.}, 50:1143--1148, 1963.

\bibitem[Coh64]{cohen2}
Paul~J. Cohen.
\newblock The independence of the continuum hypothesis. {II}.
\newblock {\em Proc. Nat. Acad. Sci. U.S.A.}, 51:105--110, 1964.

\bibitem[Cum10]{MR2768691}
James Cummings.
\newblock Iterated forcing and elementary embeddings.
\newblock In {\em Handbook of set theory. {V}ols. 1, 2, 3}, pages 775--883.
  Springer, Dordrecht, 2010.

\bibitem[DS03]{DjSh:659}
Mirna Dzamonja and Saharon Shelah.
\newblock {Universal graphs at the successor of a singular cardinal}.
\newblock {\em Journal of Symbolic Logic}, 68:366--388, 2003.
\newblock arxiv:math/0102043.

\bibitem[Eis03]{MR2029324}
Todd Eisworth.
\newblock On iterated forcing for successors of regular cardinals.
\newblock {\em Fund. Math.}, 179(3):249--266, 2003.

\bibitem[Fre84]{MR780933}
D.~H. Fremlin.
\newblock {\em Consequences of {M}artin's axiom}, volume~84 of {\em Cambridge
  Tracts in Mathematics}.
\newblock Cambridge University Press, Cambridge, 1984.

\bibitem[Git10]{Gitik-handbook}
Moti Gitik.
\newblock Prikry-type forcings.
\newblock In {\em Handbook of set theory. {V}ols. 1, 2, 3}, pages 1351--1447.
  Springer, Dordrecht, 2010.

\bibitem[GM94]{Git-Mag}
Moti Gitik and Menachem Magidor.
\newblock Extender based forcings.
\newblock {\em J. Symbolic Logic}, 59(2):445--460, 1994.

\bibitem[G{\"{o}}d40]{godel}
Kurt G{\"{o}}del.
\newblock {\em The {C}onsistency of the {C}ontinuum {H}ypothesis}.
\newblock Annals of Mathematics Studies, no. 3. Princeton University Press,
  Princeton, N. J., 1940.

\bibitem[GR12]{paper08}
Moti Gitik and Assaf Rinot.
\newblock The failure of diamond on a reflecting stationary set.
\newblock {\em Trans. Amer. Math. Soc.}, 364(4):1771--1795, 2012.

\bibitem[GS08]{GitSha}
Moti Gitik and Assaf Sharon.
\newblock On {SCH} and the approachability property.
\newblock {\em Proc. Amer. Math. Soc.}, 136(1):311--320, 2008.

\bibitem[Kan09]{Kan}
Akihiro Kanamori.
\newblock {\em The higher infinite}.
\newblock Springer Monographs in Mathematics. Springer-Verlag, Berlin, second
  edition, 2009.
\newblock Large cardinals in set theory from their beginnings, Paperback
  reprint of the 2003 edition.

\bibitem[LHR18]{paper34}
Chris Lambie-Hanson and Assaf Rinot.
\newblock Knaster and friends {I}: {C}losed colorings and precalibers.
\newblock {\em Algebra Universalis}, 79(4):Art. 90, 39, 2018.

\bibitem[OHU19]{bhu}
Ben-Neria Omer, Yair Hayut, and Spencer Unger.
\newblock Statinoary reflection and the failure of {S}{C}{H}.
\newblock {\em arXiv preprint arXiv:1908.11145}, 2019.

\bibitem[PRS19]{partII}
Alejandro Poveda, Assaf Rinot, and Dima Sinapova.
\newblock Sigma-{P}rikry forcing {I}{I}: Iteration scheme.
\newblock {\em arXiv preprint arXiv:1912.03336}, 2019.

\bibitem[RaS19]{RoSh:1001}
Andrzej Ros\l~anowski and Saharon Shelah.
\newblock The last forcing standing with diamonds.
\newblock {\em Fund. Math.}, 246(2):109--159, 2019.

\bibitem[Rin14]{paper18}
Assaf Rinot.
\newblock Chain conditions of products, and weakly compact cardinals.
\newblock {\em Bull. Symb. Log.}, 20(3):293--314, 2014.

\bibitem[Ros18]{roslanowski}
Andrzej Roslanowski.
\newblock Explicit example of collapsing $\kappa^+$ in iteration of
  $\kappa$-proper forcings.
\newblock {\em arXiv preprint arXiv:1808.01636}, 2018.

\bibitem[RS01]{RoSh:655}
Andrzej Roslanowski and Saharon Shelah.
\newblock {Iteration of $\lambda$-complete forcing notions not collapsing
  $\lambda^+$.}
\newblock {\em International Journal of Mathematics and Mathematical Sciences},
  28:63--82, 2001.
\newblock arxiv:math/9906024.

\bibitem[RS11]{RoSh:888}
Andrzej Roslanowski and Saharon Shelah.
\newblock {Lords of the iteration}.
\newblock In {\em Set Theory and Its Applications}, volume 533 of {\em
  Contemporary Mathematics (CONM)}, pages 287--330. Amer. Math. Soc., 2011.
\newblock arxiv:math/0611131.

\bibitem[RS13]{RoSh:942}
Andrzej Roslanowski and Saharon Shelah.
\newblock {More about $\lambda$--support iterations of $({<}\lambda)$--complete
  forcing notions}.
\newblock {\em Archive for Mathematical Logic}, 52:603--629, 2013.
\newblock arxiv:1105.6049.

\bibitem[Sha05]{AS}
Assaf Sharon.
\newblock {\em Weak squares, scales, stationary reflection and the failure of
  {S}{C}{H}}.
\newblock 2005.
\newblock Thesis (Ph.D.)--Tel University.

\bibitem[She78]{sh:80}
Saharon Shelah.
\newblock {A weak generalization of MA to higher cardinals}.
\newblock {\em Israel Journal of Mathematics}, 30:297--306, 1978.

\bibitem[She84]{sh:186}
Saharon Shelah.
\newblock {Diamonds, uniformization}.
\newblock {\em The Journal of Symbolic Logic}, 49:1022--1033, 1984.

\bibitem[She94]{ShelahBook}
Saharon Shelah.
\newblock {\em Cardinal arithmetic}, volume~29 of {\em Oxford Logic Guides}.
\newblock The Clarendon Press Oxford University Press, New York, 1994.
\newblock Oxford Science Publications.

\bibitem[She03a]{sh:587}
Saharon Shelah.
\newblock {Not collapsing cardinals $\leq\kappa$ in $(<\kappa)$--support
  iterations}.
\newblock {\em Israel Journal of Mathematics}, 136:29--115, 2003.
\newblock arxiv:math/9707225.

\bibitem[She03b]{sh:667}
Saharon Shelah.
\newblock {Successor of singulars: combinatorics and not collapsing cardinals
  $\leq\kappa$ in $(<\kappa)$-support iterations}.
\newblock {\em Israel Journal of Mathematics}, 134:127--155, 2003.
\newblock arxiv:math/9808140.

\bibitem[ST71]{MR0294139}
R.~M. Solovay and S.~Tennenbaum.
\newblock Iterated {C}ohen extensions and {S}ouslin's problem.
\newblock {\em Ann. of Math. (2)}, 94:201--245, 1971.

\end{thebibliography}
\end{document}